\theoremstyle{plain}
\newtheorem{thm}{Theorem}
\newtheorem{cor}[thm]{Corollary}
\newtheorem{lem}[thm]{Lemma}
\newtheorem{prop}[thm]{Proposition}
\theoremstyle{definition}
\newtheorem{defn}[thm]{Definition}
\newtheorem{rem}[thm]{Remark}
\newtheorem{ex}[thm]{Example}
\newcommand{\Po}{\mathsf{P_0}}
\newcommand{\KPo}{\mathsf{KP}}
\newcommand{\KPop}{\mathsf{KP^{CK}}}
\newcommand{\DL}{\mathsf{BDL}}
\newcommand{\cKl}{\mathsf{KA_{\ce}}}
\newcommand{\cKlp}{\mathsf{KA_{\ce}^{CK}}}
\newcommand{\cN}{\mathsf{NA_{\ce}}}
\newcommand{\cNL}{\mathsf{NL_{\ce}}}
\newcommand{\He}{\mathsf{HA}}
\newcommand{\MS}{\mathsf{MS}}
\newcommand{\KMS}{\mathsf{KMS}}
\newcommand{\KMSp}{\mathsf{KMS^{CK}}}
\newcommand{\IS}{\mathsf{IS_{0}}}
\newcommand{\hIS}{\mathsf{hIS_{0}}}
\newcommand{\KhIS}{\mathsf{KhIS_{0}}}
\newcommand{\KhISp}{\mathsf{KhIS_{0}^{CK}}}
\newcommand{\KIS}{\mathsf{KIS_{0}}}
\newcommand{\Hil}{\mathsf{Hil_{0}}}
\newcommand{\KHil}{\mathsf{KHil_{0}}}
\newcommand{\KHilp}{\mathsf{KHil_{0}^{CK}}}
\newcommand{\hBDL}{\mathsf{hBDL}}
\newcommand{\KhBDL}{\mathsf{KhBDL}}
\newcommand{\KhBDLp}{\mathsf{KhBDL^{CK}}}
\newcommand{\SH}{\mathsf{SH}}
\newcommand{\KSH}{\mathsf{KSH}}
\newcommand{\K}{\mathrm{K}}
\newcommand{\C}{\mathrm{C}}
\newcommand{\ce}{\mathrm{c}}
\newcommand{\ra}{\rightarrow}
\newcommand{\Ra}{\Rightarrow}
\newcommand{\rla}{\leftrightarrow}
\newcommand{\we}{\wedge}
\newcommand{\CK}{\mathrm{CK}}
\newcommand{\Won}{\mathrm{(W1)}}
\newcommand{\Wtw}{\mathrm{(W2)}}
\newcommand{\Wth}{\mathrm{(W3)}}
\newcommand{\Ko}{\mathrm{(K1)}}
\newcommand{\Ktw}{\mathrm{(K2)}}
\newcommand{\Kth}{\mathrm{(K3)}}
\newcommand{\Kfo}{\mathrm{(K4)}}
\newcommand{\Kfi}{\mathrm{(K5)}}
\newcommand{\Ksi}{\mathrm{(K6)}}
\newcommand{\Kse}{\mathrm{(K7)}}
\newcommand{\KMo}{\mathrm{(KM1)}}
\newcommand{\KMtw}{\mathrm{(KM2)}}
\newcommand{\KMth}{\mathrm{(KM3)}}
\newcommand{\KMfo}{\mathrm{(KM4)}}
\newcommand{\KHon}{\mathrm{(KHil1)}}
\newcommand{\KHtw}{\mathrm{(KHil2)}}
\newcommand{\KHth}{\mathrm{(KHil3)}}
\newcommand{\KHfo}{\mathrm{(KHil4)}}
\newcommand{\KHfi}{\mathrm{(KHil5)}}
\newcommand{\SHon}{\mathrm{(SH1)}}
\newcommand{\SHtw}{\mathrm{(SH2)}}
\newcommand{\SHth}{\mathrm{(SH3)}}
\newcommand{\SHfo}{\mathrm{(SH4)}}
\newcommand{\KSHth}{\mathrm{(KSH3)}}
\newcommand{\Ton}{\mathrm{(C1)}}
\newcommand{\Ttw}{\mathrm{(C2)}}
\newcommand{\Tth}{\mathrm{(C3)}}
\newcommand{\Con}{\mathrm{Con}}
\newcommand{\Conwb}{\mathrm{Con_{wb}}}
\begin{document}

\title{On Kalman's functor for bounded hemi-implicative semilattices
and hemi-implicative lattices}

\author{Ramon Jansana and Hernan Javier San Mart\'{\i}n}

\maketitle

\begin{abstract}
Hemi-implicative semilattices (lattices), originally defined under
the name of weak implicative semilattices (lattices), were
introduced by the second author of the present paper. A
hemi-implicative semilattice is an algebra $(H,\we,\ra,1)$ of type
$(2,2,0)$ such that $(H,\we)$ is a meet semilattice, $1$ is the
greatest element with respect to the order, $a\ra a = 1$ for every
$a\in H$ and for every $a$, $b$, $c\in H$, if $a\leq b\ra c$ then
$a\we b \leq c$. A bounded hemi-implicative semilattice is an
algebra $(H,\we,\ra,0,1)$ of type $(2,2,0,0)$ such that
$(H,\we,\ra,1)$ is a hemi-implicative semilattice and $0$ is the
first element with respect to the order. A hemi-implicative
lattice is an algebra $(H,\we,\vee,\ra,0,1)$ of type $(2,2,2,0,0)$
such that $(H,\we,\vee,0,1)$ is a bounded distributive lattice and
the reduct algebra $(H,\we,\ra,1)$ is a hemi-implicative
semilattice.

In this paper we introduce an equivalence for the categories of
bounded hemi-implicative semilattices and hemi-implicative
lattices, respectively, which is motivated by an old construction
due J. Kalman that relates bounded distributive lattices and
Kleene algebras.
\end{abstract}

\smallskip
\noindent \textbf{Keywords:} Semilattices, distributive lattices,
implications, categorical equivalences, congruences.

\section{Introduction}\label{int}

%Let $(H,\leq)$ be a poset. If any two elements $x$, $y \in H$ have
%a greatest lower bound (i.e., an infimum), which is denoted by $x
%\we y$, then the algebra $(H,\we)$ is called a \emph{meet
%semilattice}. The algebra $(H,\we)$ is said to be \emph{upper
%bounded} if it has a greatest element\footnote{Frequently in the
%literature what we call upper bounded semilattice is known as
%bounded semilattice.}; in this case we write $(H,\we,1)$, where
%$1$ is the last element of $(H,\leq)$. Throughout this paper we
%write \emph{semilattice} in place of meet semilattice. A
%\emph{bounded semilattice} is an algebra $(H,\we,0,1)$ of type
%$(2,0,0)$ such that $(H,\we,1)$ is an upper bounded semilattice
%and $0$ is the first element with respect to the order.

Inspired by results due to J. Kalman relating to lattices
\cite{K}, R. Cignoli proved in \cite{cig} that a construction of
J. Kalman can be extended to a functor $\K$ from the category of
bounded distributive lattices to the category of Kleene algebras
and that this functor has a left adjoint \cite[Theorem 1.7]{cig}. He
also showed that there exists an equivalence between the category
of bounded distributive lattices and the full subcategory of
centered Kleene algebras whose objects satisfy a condition called
interpolation property \cite[Theorem 2.4]{cig}. Moreover, R.
Cignoli also proved that there exists an equivalence between the
category of Heyting algebras and the category of centered Nelson
algebras \cite[Theorem 3.14]{cig}. These results were extended by
J.L. Castiglioni, R. Lewin, M. Menni and M. Sagastume in the
context of residuated lattices \cite{camesa2, CLS}. On the other
hand, the original Kalman's construction  was also extended in
\cite{CCSM} by J.L. Castiglioni, S. Celani and the second author
of the present article to the framework of algebras with
implication $(H,\we,\vee, \ra, 0,1)$ which satisfy the following
additional condition: for every $a,b,c\in H$, if $a\leq b\ra c$
then $a\we b \leq c$. Algebras with implication were introduced by
S. Celani in \cite{Ce}.

A generalization of Heyting algebras is provided by the notion of
hemi-implicative semilattice (lattice), introduced in \cite{SM2}
under the name weak implicative semilattices (lattices). An
algebra $(H,\we,\ra,1)$ of type $(2,2,0)$ is said to be  a
\emph{hemi-implicative semilattice} if $(H,\we,1)$ is an upper
bounded semilattice \footnote{Let $(H,\leq)$ be a poset. If any
two elements $a$, $b \in H$ have a greatest lower bound (i.e., an
infimum), which is denoted by $a \we b$, then the algebra
$(H,\we)$ is called a \emph{meet semilattice}. Throughout this
paper we write \emph{semilattice} in place of meet semilattice. A
semilattice $(H,\we)$ is said to be \emph{upper bounded} if it has
a greatest element; in this case we write $(H,\we,1)$, where $1$
is the last element of $(H,\leq)$. A \emph{bounded semilattice} is
an algebra $(H,\we,0,1)$ of type $(2,0,0)$ such that $(H,\we,1)$
is an upper bounded semilattice and $0$ is the first element of
$(H,\leq)$. Frequently in the literature what we call upper
bounded semilattice is known as bounded semilattice.}, $a\ra a =
1$ for every $a\in H$ and for every $a,b,c\in H$, if $a\leq b\ra
c$ then $a\we b \leq c$. A \emph{bounded hemi-implicative
semilattice} is an algebra $(H,\we,\ra,0,1)$ of type $(2,2,0,0)$
such that $(H,\we,\ra,1)$ is a hemi-implicative semilattice and
$0$ is the first element with respect to the order. A
\emph{hemi-implicative lattice} is an algebra
$(H,\we,\vee,\ra,0,1)$ of type $(2,2,2,0,0)$ such that
$(H,\we,\vee,0,1)$ is a bounded distributive lattice and the
reduct algebra $(H,\we,\ra,1)$ is a hemi-implicative semilattice.
Implicative semilattices \cite{N} and Hilbert algebras with
infimum \cite{Fig} are examples of hemi-implicative semilattices.
Semi-Heyting algebras \cite{Sanka} and some algebras studied in
\cite{CCSM} are examples of hemi-implicative lattices. For
instance, the RWH-algebras, introduced and studied by S. Celani
and the first author of this article in \cite{CJ}, are examples of
hemi-implicative lattices.

The applications of Kalman's construction given in \cite{cig}
suggest that it is potentially fruitful to understand Kalman's
work in the context of bounded hemi-implicative semilattices and
hemi-implicative lattices. We do this in the present paper. The
main goal of the paper is to introduce and study an equivalence
for the categories of bounded hemi-implicative semilattices and
hemi-implicative lattices, respectively, and for some of its full
subcategories.

The paper is organized as follows. In Section \ref{Kc} we give
some results about Kalman's functor for bounded distributive
lattices and Heyting algebras. In Section \ref{s3} we generalize
Kalman's functor for the category whose objects are posets with
first element and whose morphisms are maps which preserve finite
existing infima and the first element (note that the morphisms of
this category are in particular order-preserving maps). Moreover,
we apply the mentioned equivalence in order to build up an
equivalence for the category whose objects are bounded
semilattices and whose morphisms are the corresponding algebra
homomorphisms. In Section \ref{s4} we recall definitions and
properties about hemi-implicative semilattices (lattices)
\cite{SM2}, Hilbert algebras with infimum \cite{Fig}, implicative
semilattices \cite{N} and semi-Heyting algebras \cite{Sanka}. In
Section \ref{s5} we employ results of sections \ref{s3} and
\ref{s4} in order to establish equivalences, following the
original Kalman's construction, for the categories of bounded
hemi-implicative semilattices, bounded Hilbert algebras with
infimum, bounded implicative semilattices, hemi-implicative
lattices, respectively, and the category of semi-Heyting algebras.
Finally, in Section \ref{s6} we introduce and study the notion of
well-behaved congruences for the objects corresponding to the
categories introduced in Section \ref{s5}. \vspace{1pt}

We give a table with some of the categories we shall consider in
this paper: \vspace{10pt}

\small

\begin{tabular}{c|c| c c}

$\mathbf{Category}$ &       $\mathbf{Objects}$                                                     & $\mathbf{Morphisms}$ \\
\hline
&&\\
$\DL$                       &Bounded distributive lattices                                         & Algebra homomorphisms   \\
$\cKl$                      &Centered Kleene algebras                                              & Algebra homomorphisms   \\
$\He$                       &Heyting algebras                                                      & Algebra homomorphisms       \\
$\cN$                       &Centered Nelson algebras                                              & Algebra homomorphisms\\
$\cNL$                      &Centered Nelson lattices                                              & Algebra homomorphisms\\
$\Po$                       &Posets with bottom                                                    & Certain order morphisms \\
$\KPo$                      &Kleene posets                                                         & Certain order morphisms \\
$\MS$                       &Bounded semilattices                                                  & Algebra homomorphisms \\
$\KMS$                      &Certain objects of $\KPo$                                             & Morphisms of $\KPo$ \\
$\hIS$                      &Bounded hemi-implicative semilattices                                  & Algebra homomorphisms \\
$\hBDL$                     &Hemi-implicative lattices                                              & Algebra homomorphisms  \\

\end{tabular}

\begin{tabular}{c|c| c c}
$\mathbf{Category}$ &   $\mathbf{Objects}$         & $\mathbf{Morphisms}$ \\
\hline
$\Hil$                      &Bounded Hilbert algebras with infimum                                 & Algebra homomorphisms \\
$\IS$                       &Bounded implicative semilattices                                      & Algebra homomorphisms \\
$\SH$                       &Semi-Heyting algebras                                                 & Algebra homomorphisms \\
$\KhIS$                     &Objects of $\KMS$ with an additional                                  & Certain morphisms of $\KMS$ \\
       & operation & \\
$\KHil$                      &Certain objects of $\KhIS$                                            & Morphisms of $\KhIS$ \\
$\KIS$                       &Certain objects of $\KhIS$                                            & Morphisms of $\KhIS$ \\
$\KhBDL$                     &Objects of $\cKl$ with an additional                                  & Certain morphisms of $\cKl$ \\
 & operation & \\
$\KSH$                       &Certain objects of $\KhBDL$                                           & Morphisms of $\KhBDL$ \\

\end{tabular}

\normalsize

\vspace{10pt}

If $\mathrm{A}$ is one of the categories $\cKl$, $\KPo$, $\KMS$,
$\KhIS$, $\KHil$, and $\KhBDL$, then we write $\mathrm{A}^{\CK}$
to denote the full subcategory of $\mathrm{A}$ whose objects
satisfy the condition $(\CK)$, that will be defined later.

The results we expound in the present paper are motivated by the
abstraction of ideas coming from different varieties of algebras
related to some constructive logics, as Heyting algebras and
Nelson algebras, and in particular by the existent categorical
equivalence between the category of Heyting algebras and the
category of centered Nelson algebras (see \cite{cig}) combined
with the fact that the variety of centered Nelson algebras is term
equivalent to the variety of centered Nelson lattices, as it is shown in
\cite{SV} (see also \cite{B}). In this paper we introduce and
study categories which are closely connected with the category of
centered Nelson lattices, as for instance the category $\KPo$ of
Kleene posets (of which centered Nelson lattices can be seen as
particular cases) and the category $\KhBDL$ of centered Kleene
algebras endowed with a binary operation which generalizes the
implication of Nelson lattices. We consider that the study of the
above mentioned categories is interesting in itself. We also think
that the categorical equivalences and some related properties
studied in this paper can be of interest for future work
concerning the understanding of the categories of bounded
hemi-implicative semilattices and hemi-implicative lattices,
respectively.

\section{Basic results}\label{Kc}

The definition of the functor from the category of Kleene algebras
to the category of bounded distributive lattices given by R.
Cignoli \cite{cig} is based on Priestley duality, and the
interpolation property for Kleene algebras considered by Cignoli
in establishing the equivalence is stated in topological terms. On
the other hand, M. Sagastume proved in an unpublished manuscript
\cite{Sf} that in centered Kleene algebras the interpolation
property is equivalent to an algebraic condition called (CK), that
we will state later on. Moreover, she presented an equivalence
between the category of bounded distributive lattices and the
category of centered Kleene algebras that satisfy (CK), but using
a different (purely algebraic) construction to that given by R.
Cignoli in \cite{cig}. In what follows we describe this
equivalence whose details can be found in \cite{CCSM}.

We assume the reader is familiar with bounded distributive
lattices and Heyting algebras \cite{BD}. A \emph{De Morgan}
algebra is an algebra $(H,\we,\vee,{\sim},0,1)$ of type
$(2,2,1,0,0)$ such that $(H,\we, \vee,0,1)$ is a bounded
distributive lattice and ${\sim}$ fulfills the equations
\[
\text{${\sim} {\sim}
x = x$ \;\; and \;\; ${\sim}(x \vee y) = {\sim} x \we {\sim} y$.}
\]
An operation ${\sim}$ which satisfies the previous two equations
is called \emph{De Morgan involution}. A \emph{Kleene algebra} is
a De Morgan algebra in which the inequality
\[
x\we {\sim} x \leq y \vee {\sim} y
\] holds. A
\emph{centered Kleene algebra} is an algebra
$(H,\we,\vee,{\sim},c, 0,1)$ where the algebra $(H,\we,\vee,$
${\sim},0,1)$ is a Kleene algebra and $\ce$ is an element such
that $\ce = {\sim} \ce.$ It is immediate to see that $\ce$ is
necessarily unique. The element $\ce$ is called \emph{center}. We
write $\DL$ for the category of bounded distributive lattices and
$\cKl$ for the category of centered Kleene algebras. In both cases
the morphisms are the corresponding algebra homomorphisms. It is
interesting to note that if $T$ and $U$ are centered Kleene algebras
and $f:T\ra U$ is a morphism of Kleene algebras then $f$ preserves
necessarily the center, i.e., $f(\ce) = \ce$.

The functor $\K$ from the category $\DL$ to the category $\cKl$ is
defined as follows. For an object $H\in \DL$ we let
\[ \K(H): =\{(a,b) \in H\times H: a\we b = 0\}.
\]
This set is endowed with the operations and the distinguished
elements defined by:
\begin{eqnarray*}
   (a,b)\vee (d,e) & := & (a\vee d,b\we e)\\
   (a,b)\we (d,e)& := & (a\we d,b\vee e)\\
   {\sim} (a,b)& := & (b,a)\\
0 & := & (0,1)\\
1 & := & (1,0)\\
\ce & := & (0,0)
\end{eqnarray*}
We have that $(\K(H),\we,\vee,\sim,\ce,0,1)\in \cKl$.

For a morphism  $f:H \ra G \in \DL$, the map $\K(f):\K(H) \ra
\K(G)$ defined by
$$\K(f)(a,b) = (f(a),f(b))$$
is a morphism in $\cKl$. Hence, $\K$ is a functor from $\DL$ to
$\cKl$.

Let $(T,\we,\vee,{\sim},\ce,0,1)\in \cKl$. The set
$$\C(T):=\{x\in T:x\geq \ce\}$$ is the universe of a subalgebra of
$(T,\we,\vee,\ce,1)$ and $(\C(T),\we,\vee,\ce,1) \in \DL$.
Moreover, if $g:T\ra U$ is a morphism in $\cKl$, then the map
$\C(g): \C(T) \ra \C(U)$, given by $\C(g)(x) = g(x)$, is a
morphism in $\DL$. Thus, $\C$ is a functor from $\cKl$ to $\DL$.

Let $H \in \DL$.  The map $\alpha_{H}: H \ra \C(\K(H))$ given by
$\alpha_{H}(a) = (a,0)$ is an isomorphism in $\DL$. If $T \in
\cKl$, then the map $\beta_T: T\ra \K(\C(T))$ given by $\beta_T(x)
= (x\vee \ce, {\sim} x \vee \ce)$ is injective and a morphism in
$\cKl$. It is not difficult to show that the functor $\K: \DL \ra
\cKl$ has as left adjoint the functor $\C: \cKl \ra \DL$ with unit
$\beta$ and counit $\alpha^{-1}$.

We are interested though in an equivalence between $\DL$ and the
full subcategory of $\cKl$ whose objects satisfy the condition
(\ref{eq-CK}) we proceed to state.

Let $T\in \cKl$. We consider the  algebraic condition:
\begin{equation} \label{eq-CK}
(\forall x, y \geq c)(x\we y = \ce \
\longrightarrow \ (\exists z)(z\vee \ce = x \ \& \ {\sim} z \vee
\ce = y)). \tag{$\CK$}
\end{equation}
This condition characterizes the surjectivity of $\beta_T$, that
is, for every $T\in \cKl$, $T$ satisfies (\ref{eq-CK}) if and only
if $\beta_T$ is a surjective map, as shown in \cite{Sf}. The
condition (\ref{eq-CK}) is not necessarily verified in every
centered Kleene algebra (see \cite{CCSM}).

We write $\cKlp$ for the full subcategory of $\cKl$ whose objects
satisfy  (\ref{eq-CK}). The functor $\K$ can then  be seen as a
functor from $\DL$ to $\cKlp$. The next theorem was proved by M.
Sagastume in \cite{Sf}. A complete proof of it can be also found
in \cite{CCSM}.

\begin{thm} \label{Kce}
The functors $\K$ and $\C$ establish a categorical equivalence
between $\DL$ and $\cKlp$ with natural isomorphisms  $\alpha$ and
$\beta$.
\end{thm}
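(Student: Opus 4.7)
The plan is to leverage the adjunction $\C \dashv \K$ that is already in hand: the text states that $\alpha_H : H \to \C(\K(H))$ is an isomorphism in $\DL$ for every $H\in\DL$, that $\beta_T : T \to \K(\C(T))$ is always an injective morphism in $\cKl$, and that $\beta_T$ is \emph{surjective} precisely when $T$ satisfies (\ref{eq-CK}). Thus the equivalence will follow once I check two things: first, that $\K$ actually corestricts to a functor $\K:\DL \to \cKlp$, i.e.\ that every $\K(H)$ satisfies (\ref{eq-CK}); and second, that $\alpha$ and $\beta$ are natural with respect to morphisms in the relevant categories.

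For the first step, I would fix $H\in\DL$ and first identify the elements of $\K(H)$ lying above $\ce = (0,0)$. Using the order induced by the meet $(a,b)\we(d,e) = (a\we d, b\vee e)$, one sees that $(a,b)\ge (0,0)$ forces $b=0$, so the elements above $\ce$ in $\K(H)$ are exactly the pairs $(a,0)$ with $a\in H$. Now suppose $(a,0),(b,0)\in\K(H)$ satisfy $(a,0)\we(b,0)=\ce$; the meet equals $(a\we b,0)$, so $a\we b=0$ in $H$, which means $(a,b)$ is itself a valid element of $\K(H)$. Taking $z:=(a,b)$ yields $z\vee\ce = (a\vee 0,\,b\we 0)=(a,0)$ and $\mathord{\sim}z\vee\ce = (b,a)\vee(0,0)=(b,0)$, so $z$ witnesses condition (\ref{eq-CK}) for the pair $((a,0),(b,0))$. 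Hence $\K(H)\in\cKlp$.

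For the second step, fix $T\in\cKlp$. Since $\beta_T$ is an injective homomorphism in $\cKl$ by the statements already recalled, and (\ref{eq-CK}) supplies its surjectivity, $\beta_T$ is a bijective morphism in $\cKl$; as in any variety, a bijective homomorphism is an isomorphism. Together with the fact that $\alpha_H$ is already known to be an isomorphism in $\DL$ for every $H\in\DL$, this gives that $\alpha$ and $\beta^{-1}$ are the components of natural isomorphisms $\mathrm{id}_{\DL}\Rightarrow \C\circ\K$ and $\K\circ\C\Rightarrow \mathrm{id}_{\cKlp}$. Naturality is immediate from the defining formulas $\alpha_H(a)=(a,0)$ and $\beta_T(x)=(x\vee\ce,\mathord{\sim}x\vee\ce)$, using that any morphism in $\cKl$ preserves $\vee$, $\mathord{\sim}$ and (automatically) the center.

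The only genuine technical step is the verification that $\K(H)\in\cKlp$; everything else is a repackaging of the adjunction into an equivalence once surjectivity of the unit is guaranteed by restricting the codomain to $\cKlp$. I expect the witness $z=(a,b)$ above to be the single substantive observation driving the proof.
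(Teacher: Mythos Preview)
Your argument is correct. Note, however, that the paper does not actually supply its own proof of this theorem: it merely states the result and refers the reader to Sagastume's unpublished manuscript \cite{Sf} and to \cite{CCSM} for complete details. What you have written is precisely the natural completion one would expect from the surrounding material the paper does establish (that $\alpha_H$ is an isomorphism, that $\beta_T$ is an injective $\cKl$-morphism, that surjectivity of $\beta_T$ is equivalent to $(\CK)$, and that $\C\dashv\K$ with unit $\beta$ and counit $\alpha^{-1}$). Your verification that $\K(H)$ satisfies $(\CK)$ via the witness $z=(a,b)$ is the one genuinely new ingredient, and it is exactly right; the rest is, as you say, a routine repackaging of the adjunction into an equivalence once the unit becomes invertible on the restricted codomain. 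So there is nothing to compare against in the paper itself, but your proof stands on its own.
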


Let $T\in \cKl$. We know that $\beta_T$ is not necessarily a
surjective map. However we will prove that $\beta_T$ is an
epimorphism. Before, we need a lemma that is interesting in its
own right. It tells us that the morphisms in $\cKl$ are determined
by their behavior on the elements greater than or equal to the
center.

\begin{lem}
\label{lem:morph-C(T)} If $f:T\ra U$ and $g:T\ra U$ are morphisms
in $\cKl$ and $f(x) = g(x)$ whenever $x\in \C(T)$, then $f(x) =
g(x)$ for every $x\in T$.
\end{lem}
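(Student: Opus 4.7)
The plan is to exploit the injectivity of $\beta_T$ (stated just before the lemma) applied to $U$: we show that $f(x)$ and $g(x)$ always land on the same element of $\K(\C(U))$ under $\beta_U$.

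First I would take an arbitrary $x\in T$ and consider the two elements $x\vee\ce$ and ${\sim}x\vee\ce$, both of which belong to $\C(T)$ by definition. Since $f$ and $g$ are morphisms in $\cKl$, they both preserve $\vee$, ${\sim}$ and $\ce$ (recall that any morphism of centered Kleene algebras automatically preserves the center). Therefore
\begin{align*}
f(x)\vee \ce &= f(x\vee \ce) = g(x\vee \ce) = g(x)\vee \ce,\\
{\sim} f(x)\vee \ce &= f({\sim} x\vee \ce) = g({\sim} x\vee \ce) = {\sim} g(x)\vee \ce,
\end{align*}
where the middle equalities use the hypothesis that $f$ and $g$ agree on $\C(T)$.

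Next I would rewrite these equalities as $\beta_U(f(x))=\beta_U(g(x))$, using the definition $\beta_U(y)=(y\vee \ce,\,{\sim} y\vee \ce)$ recalled before the statement. Finally, since $\beta_U:U\to \K(\C(U))$ is injective (a fact recorded in the text immediately preceding the statement of Theorem \ref{Kce}), I conclude $f(x)=g(x)$. As $x$ was arbitrary, this finishes the proof.

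There is no serious obstacle here; the only subtlety is making explicit that morphisms in $\cKl$ preserve the center, so that each component of $\beta_U(f(x))$ is indeed the $f$-image of an element of $\C(T)$. The argument is really just ``$\beta$ is injective, so agreement on $\C(T)$ suffices to determine the morphism''.
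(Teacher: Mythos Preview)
Your proof is correct and follows essentially the same computation as the paper: both establish $f(x)\vee\ce=g(x)\vee\ce$ and ${\sim}f(x)\vee\ce={\sim}g(x)\vee\ce$ from the hypothesis. The only cosmetic difference is the final step, where the paper invokes distributivity of $U$ directly (from $f(x)\vee\ce=g(x)\vee\ce$ and $f(x)\wedge\ce=g(x)\wedge\ce$) while you cite the injectivity of $\beta_U$, which packages the same distributivity argument.
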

\begin{proof}
Suppose that $f(x) = g(x)$ whenever $x\in \C(T)$. Let $x$ be an
arbitrary element of $T$. Then
\[
\begin{array}
[c]{lllll}
f(x) \vee \ce & = &  f(x) \vee f(\ce) &  & \\
& = & f(x\vee \ce)&  & \\
& = & g(x\vee \ce)&  & \\
& = & g(x) \vee g(\ce)&  & \\
& = & g(x) \vee \ce,& &
\end{array}
\]
so we obtain that $f(x) \vee \ce = g(x) \vee \ce$. Similarly we
can prove that ${\sim} f(x) \vee \ce = {\sim} g(x) \vee \ce$,
which is equivalent to $f(x) \we \ce = g(x) \we \ce$. Hence, it
follows from the distributivity of the underlying lattice of $U$
that $f(x) = g(x)$.
\end{proof}

\begin{prop} \label{beta}
Let $T\in \cKl$. Then $\beta_T$ is an epimorphism.
\end{prop}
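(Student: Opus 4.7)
The plan is to exploit Lemma~\ref{lem:morph-C(T)}: to show $\beta_T$ is an epimorphism in $\cKl$, I only need to check that any two parallel morphisms $f,g\colon \K(\C(T))\ra U$ satisfying $f\circ \beta_T = g\circ \beta_T$ agree on the set $\C(\K(\C(T)))$ of elements of $\K(\C(T))$ above its center; the lemma then forces $f=g$ on all of $\K(\C(T))$.

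First I would describe $\C(\K(\C(T)))$ explicitly. The center of $\K(\C(T))$ is $(\ce,\ce)$, where $\ce$ plays the role of bottom in the bounded distributive lattice $\C(T)$. Using the definition of the order in $\K(H)$, namely $(a,b)\leq (d,e)$ iff $a\leq d$ and $b\geq e$, I find that $(a,b)\geq (\ce,\ce)$ in $\K(\C(T))$ forces $a\geq \ce$ (automatic, since $a\in\C(T)$) and $b\leq \ce$ (so $b=\ce$, as $b\in\C(T)$). Hence
\[
\C(\K(\C(T))) = \{(a,\ce): a\in \C(T)\}.
\]

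Next I would compute $\beta_T(a)$ for $a\in \C(T)$. Since $a\geq \ce$ we get $a\vee \ce = a$; and from the properties of a De Morgan involution, $a\geq \ce$ implies $\sim a \leq \sim \ce = \ce$, so $\sim a \vee \ce = \ce$. Therefore $\beta_T(a) = (a,\ce)$, which shows that the restriction of $\beta_T$ to $\C(T)$ is a surjection onto $\C(\K(\C(T)))$ (in fact it coincides with the isomorphism $\alpha_{\C(T)}$).

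Finally, assume $f,g\colon \K(\C(T)) \ra U$ are morphisms in $\cKl$ with $f\circ \beta_T = g\circ \beta_T$. Every element $y\in\C(\K(\C(T)))$ is of the form $y=\beta_T(a)$ for some $a\in \C(T)$, hence $f(y) = g(y)$. Lemma~\ref{lem:morph-C(T)} then gives $f=g$ on $\K(\C(T))$, proving that $\beta_T$ is an epimorphism. No serious obstacle is expected; the only care required is in correctly identifying the center of $\K(\C(T))$ and the image of $\beta_T$ on elements above $\ce$.
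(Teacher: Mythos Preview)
Your proposal is correct and follows essentially the same route as the paper's proof: both identify $\C(\K(\C(T)))$ as $\{(x,\ce): x\in\C(T)\}$, observe that $\beta_T$ restricted to $\C(T)$ surjects onto this set, and then invoke Lemma~\ref{lem:morph-C(T)} to conclude. Your additional remark that this restriction coincides with $\alpha_{\C(T)}$ is a nice touch not made explicit in the paper.
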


\begin{proof}
Let $f: \K(\C(T)) \ra U$ and $g:\K(\C(T))\ra U$ be morphisms in
$\cKl$ such that $f \circ \beta_T = g \circ \beta_T$, where
$\circ$ denotes the composition of functions. We will prove that
$f = g$. Let $(x,y) \in \C(\K(\C(T)))$, i.e., $x\we y = \ce$,
$x\geq \ce$, $y \geq \ce$ and $(\ce,\ce) \leq (x,y)$, where we
also write $\leq$ for the order associated to the underlying
lattice of $\K(\C(T))$. In particular we have that $y\leq \ce$, so
$y = \ce$. Then $(x,y) = (x,\ce)$. Besides, since $x\geq \ce$ we
have that $\beta_{T}(x) = (x,\ce)$. Then
\[
\begin{array}
[c]{lllll}
f(x,y) & = & f(x,\ce) &  & \\
& = & (f \circ \beta_{T})(x)&  & \\
& = & (g \circ \beta_{T})(x)&  & \\
& = & g(x, \ce)&  & \\
& = & g(x,y).& &
\end{array}
\]
Hence, $f(x,y) = g(x,y)$ whenever $(x,y) \in \C(\K(\C(T)))$.
Therefore, it follows from Lemma \ref{lem:morph-C(T)} that $f(x,y)
= g(x,y)$ for every $(x,y) \in \K(\C(T))$, which was our aim.
\end{proof}

Let $H \in \DL$ and $a$, $b\in H$. If the relative
pseudocomplement of $a$ with respect to $b$ exists, then we denote
it by $a \ra_{\He} b$. Recall that a \emph{Nelson algebra}
\cite{cig} is a Kleene algebra such that for each pair $x$, $y$
there exists the binary operation $\Ra$ given by $x\Ra y: = x
\ra_{\He} ({\sim x} \vee y)$ and for every $x,y,z$ it holds that
$(x \we y)\Rightarrow z = x \Rightarrow (y\Rightarrow z)$. The
binary operation $\Rightarrow$ so defined is called the weak
implication.

We denote by $\He$ the category of Heyting algebras. M. Fidel
\cite{F} and D. Vakarelov \cite{V} proved independently that if
$H\in \He$, then the Kleene algebra $\K(H)$ is a Nelson algebra,
in which the weak implication is defined  for pairs
$(a, b)$ and $(d, e$) in $\K(H)$ as follows:
\begin{equation} \label{ic}
(a,b)\Rightarrow(d,e):= (a\ra d, a\we e).
\end{equation}
We say that an algebra $(T,\we,\vee,\Rightarrow,{\sim},\ce,0,1)$
is a \emph{centered Nelson algebra} if the reduct
$(T,\we,\vee,\Rightarrow,{\sim},0,1)$ is a Nelson algebra and
$\ce$ satisfies ${\sim} \ce = \ce$. We write $\cN$ for the
category of centered Nelson algebras.

The following result appears in \cite[Proposition 3.7]{camesa2}
and  is a reformulation of \cite[Theorem 3.14]{cig}.

\begin{thm} \label{adjHey}
The functors $\K$ and $\C$ establish a categorical equivalence
between $\He$ and $\cN$ with natural isomorphisms $\alpha$ and
$\beta$.
\end{thm}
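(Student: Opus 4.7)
The plan is to build on Theorem \ref{Kce}, which already supplies the Kleene level of the equivalence, by enriching with the implication structure. I will (i) show that $\K$ and $\C$ lift/restrict to functors between $\He$ and $\cN$, (ii) establish that every centered Nelson algebra satisfies condition (\ref{eq-CK}) (so that $\cN$ sits inside $\cKlp$), and (iii) verify that the natural transformations $\alpha$ and $\beta$ from Theorem \ref{Kce} also respect the implication.

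For (i), on the $\K$-side, that $\K(H)$ is a Nelson algebra with weak implication given coordinatewise by (\ref{ic}) is the Fidel--Vakarelov theorem \cite{F,V}; centeredness is immediate from $\ce = (0,0)$, and $\K(f)$ preserves $\Rightarrow$ by a direct coordinatewise check from the fact that $f$ preserves $\ra_{\He}$. On the $\C$-side, given $T \in \cN$ I endow $\C(T)$ with the binary operation
\[
x \ra_{\He} y := (x \Rightarrow y) \vee \ce
\]
for $x, y \in \C(T)$, and show it is a relative pseudocomplement on the bounded distributive lattice $(\C(T), \we, \vee, \ce, 1)$. This relies on the Nelson currying identity $(x \we y) \Rightarrow z = x \Rightarrow (y \Rightarrow z)$ together with elementary Kleene-level identities for elements above the center. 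Morphisms in $\cN$ preserve $\Rightarrow$ and $\ce$, hence $\C(g)$ preserves $\ra_{\He}$.

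For (ii), which is the principal obstacle, I show that every $T \in \cN$ satisfies (\ref{eq-CK}). Given $x, y \geq \ce$ with $x \we y = \ce$, the proposed witness is
\[
z := (y \Rightarrow 0) \we x.
\]
Verifying $z \vee \ce = x$ and $\sim z \vee \ce = y$ rests on the Nelson-specific identity $\sim(y \Rightarrow 0) \vee \ce = y$ valid when $y \geq \ce$, together with lattice distributivity and the hypothesis $x \we y = \ce$. This is the only step that genuinely requires the Nelson structure rather than the bare Kleene one, and it is exactly what places $\cN$ inside $\cKlp$ so that Theorem \ref{Kce} becomes applicable.

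Finally, for (iii), by (ii) and Theorem \ref{Kce} both $\alpha_H$ and $\beta_T$ are bijective $\cKl$-morphisms, so it remains to check preservation of the implication. For $\alpha_H$ this is the direct computation
\[
((a,0) \Rightarrow (b,0)) \vee \ce = (a \ra_{\He} b,\, a \we 0) \vee (0, 0) = (a \ra_{\He} b, 0) = \alpha_H(a \ra_{\He} b).
\]
For $\beta_T$, preservation of $\Rightarrow$ reduces, via formula (\ref{ic}) applied in $\K(\C(T))$, to a pair of identities on $T$ describing how $\Rightarrow$ interacts with $(-)\vee \ce$ and $(-)\we \ce$, both of which follow from the Nelson axioms. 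Naturality of $\alpha$ and $\beta$ is inherited verbatim from Theorem \ref{Kce}.
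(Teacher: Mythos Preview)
The paper does not prove Theorem~\ref{adjHey} at all: it is simply quoted as a known result, citing \cite[Theorem~3.14]{cig} and \cite[Proposition~3.7]{camesa2}. So there is no ``paper's own proof'' to compare against; your proposal supplies an argument where the paper gives none.

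Your strategy is the right one and your proof is essentially correct. The crucial step is~(ii), and your witness $z=(y\Rightarrow 0)\wedge x$ does work. Two remarks on the details you left implicit. First, the identity ${\sim}(y\Rightarrow 0)\vee\ce=y$ for $y\ge\ce$ is indeed provable from the Nelson axioms: one direction uses $y\Rightarrow 0\ge{\sim}y$, and for the other, the Nelson identity $y\wedge(y\Rightarrow 0)=y\wedge{\sim}y$ together with De~Morgan and $y\wedge{\sim}y\le\ce$ yields $y\le{\sim}(y\Rightarrow 0)\vee\ce$. Second, for $z\vee\ce=x$ you need $x\le(y\Rightarrow 0)\vee\ce$, and ``lattice distributivity and the hypothesis $x\wedge y=\ce$'' alone does not give this; the missing ingredient is again the Nelson currying law. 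From $x\wedge y=\ce$ one gets $x\Rightarrow(y\Rightarrow 0)=(x\wedge y)\Rightarrow 0=\ce\Rightarrow 0=1$, hence $x\le{\sim}x\vee(y\Rightarrow 0)\le\ce\vee(y\Rightarrow 0)$ since ${\sim}x\le\ce$. With that in hand, both $z\vee\ce=x$ and ${\sim}z\vee\ce=y$ follow by distributivity. Your treatment of (i) and (iii) is fine; note that for $x,y\ge\ce$ one already has $x\Rightarrow y=x\ra_{\He}y\ge\ce$, so the ``${}\vee\ce$'' in your definition of the Heyting implication on $\C(T)$ is harmless but redundant.
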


We assume the reader is familiar with commutative residuated
lattices \cite{Ts}. An \emph{involutive residuated lattice} is a
bounded, integral and commutative residuated lattice $(T,\we,
\vee, \ast,\ra, 0, 1)$ such that for every $x\in T$ it holds that
$\neg \neg x = x$, where $\neg x: = x\ra 0$ and $0$ is the first
element of $T$ \cite{B}. In an involutive residuated lattice it
holds that $x \ast y = \neg (x \ra \neg y)$ and $x\ra y = \neg (x
\ast \neg y)$. A \emph{Nelson lattice} \cite{B} is an involutive
residuated lattice $(T,\we,\vee, *,\ra,0,1)$ which satisfies the
additional inequality $(x^2 \ra y)\we ((\neg y)^2 \ra \neg x) \leq
x\ra y$, where $x^2:=x\ast x$. See also \cite{V}.

\begin{rem} \label{br3}
Let $(T,\we, \vee, \Rightarrow,{\sim}, 0,1)$ be a Nelson algebra.
We define on $T$ the binary operations $*$ and $\ra$ by
\begin{eqnarray*}
x*y:=& {\sim} (x \Rightarrow {\sim} y) \vee {\sim} (y \Rightarrow
{\sim} x), \hspace{1cm} x \ra y :=&  (x \Rightarrow y) \we ({\sim}
y\Rightarrow {\sim} x).
\end{eqnarray*}
Then Theorem 3.1 of \cite{B} says that $(T,\we, \vee, \ra,*, 0,1)$
is a Nelson lattice. Moreover, ${\sim} x = \neg x = x\ra 0$.

Let $(T,\we,\vee,*,\ra,0,1)$ be a Nelson lattice. We define on $T$
a binary operation $\Rightarrow$ and a unary operation $\sim$ by
\begin{eqnarray*}
   x \Rightarrow y:=& x^2 \ra y, \hspace{1cm}
   {\sim} x:=& \neg x,
\end{eqnarray*}
where $x^2 = x*x$. Then Theorem 3.6 of \cite{B} says that the
algebra $(T,\we, \vee,\Rightarrow,{\sim},0,1)$ is a Nelson
algebra.

In \cite[Theorem 3.11]{B} it was also proved that the category of
Nelson algebras and the category of Nelson lattices are
isomorphic. Taking into account the construction of this
isomorphism in \cite{B} we have that the variety of Nelson
algebras and the variety of Nelson lattices are term equivalent
and the term equivalence is given by the operations we have
defined before.
\end{rem}

The results from \cite{B}  about the connections between Nelson
algebras and Nelson lattices mentioned in Remark \ref{br3} are
based on results from Spinks and Veroff \cite{SV}. In particular,
the term equivalence of the varieties of Nelson algebras and
Nelson lattices  was discovered by Spinks and Veroff in \cite{SV}.
\vspace{1pt}

A \emph{centered Nelson lattice} is an algebra
$(T,\we,\vee,*,\ra,\ce,0,1)$, where the reduct
$(T,\we,\vee,*,\ra,0,1)$ is a Nelson lattice and $\ce$ is an
element such that $\neg \ce = \ce$. It follows from Remark
\ref{br3} that the variety of centered Nelson algebras and the
variety of centered Nelson lattices are term equivalent. We write
$\cNL$ for the category of centered Nelson lattices.

\begin{rem} \label{imp}
Let $(H,\we, \vee,\ra,0,1) \in \He$. Then
$(\K(H),\we,\vee,\Ra,{\sim},\ce,0,1) \in \cN$. Hence it follows
from Remark \ref{br3} that $(\K(H),\we,\vee,*,\ra,\ce,0,1) \in
\cNL$, where for $(a,b)$ and $(d,e)$ in $\K(H)$ the operations
$\ast$ and $\ra$ take the form
\begin{eqnarray*}
   (a,b) * (d,e) =  (a\we d, (a\ra e)\we (d\ra b)),\\
   (a,b) \ra (d,e) =  ((a\ra d)\we (e\ra b), a \we e).
\end{eqnarray*}
We write $\ra$ both for the implication in $H$ as for the
implication in $\K(H)$.
\end{rem}

It follows from Theorem \ref{adjHey} and Remark \ref{br3} that
there is a categorical equivalence between $\He$ and $\cNL$, as it
was also mentioned in \cite[Corollary 2.11]{CCSM}. In what follows
we will make explicit a construction of this equivalence.

\begin{prop} \label{Nl}
The functors $\K$ and $\C$ establish a categorical equivalence
between $\He$ and $\cNL$ with natural isomorphisms $\alpha$ and
$\beta$.
\end{prop}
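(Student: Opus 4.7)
The plan is to obtain Proposition \ref{Nl} as a direct corollary of Theorem \ref{adjHey} together with the term equivalence between $\cN$ and $\cNL$ recorded in Remark \ref{br3}. Since that term equivalence yields an isomorphism of categories (the underlying sets and morphisms coincide; only the chosen basic operations change), any equivalence between $\He$ and $\cN$ transports automatically to an equivalence between $\He$ and $\cNL$ by composing with this isomorphism. The objective, then, is to pin down the two functors and the two natural transformations concretely in the Nelson-lattice signature.

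First I would specify the functor $\K\colon \He \to \cNL$. For $H\in \He$, Theorem \ref{adjHey} gives $(\K(H),\we,\vee,\Ra,{\sim},\ce,0,1)\in\cN$, and Remark \ref{imp} (obtained by unfolding the term definitions of $\ast$ and $\ra$ from Remark \ref{br3} on the pairs of $\K(H)$) produces the Nelson-lattice operations explicitly; hence $\K(H)\in\cNL$. For a morphism $f\colon H\to G$ in $\He$, the map $\K(f)(a,b)=(f(a),f(b))$ is a morphism in $\cN$ by Theorem \ref{adjHey}, and because $\ast$ and $\ra$ are terms in $\we$, $\vee$, $\Ra$, ${\sim}$, $0$, $1$, the same map is a morphism in $\cNL$.

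Next I would describe $\C\colon \cNL \to \He$. Given $T\in\cNL$, apply the term equivalence to view $T$ as $T'\in\cN$ with $x\Ra y:=x^{2}\ra y$ and ${\sim} x := \neg x$. The underlying sets satisfy $\C(T)=\C(T')=\{x\in T:x\geq \ce\}$, and by Theorem \ref{adjHey} this is the universe of a Heyting algebra, whose implication on $\C(T)$ can be read back in Nelson-lattice terms via the term equivalence. For a morphism $g$ in $\cNL$, the term equivalence turns it into a morphism in $\cN$, so $\C(g)=g{\restriction}_{\C(T)}$ is a morphism in $\He$ exactly as in Theorem \ref{adjHey}.

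Finally, I would keep the transformations $\alpha_{H}(a)=(a,0)$ and $\beta_{T}(x)=(x\vee\ce,{\sim} x\vee\ce)$ unchanged. Theorem \ref{adjHey} already provides that they are natural isomorphisms between the identity functor and $\C\circ\K$, respectively $\K\circ\C$, at the level of $\cN$; since the categories $\cN$ and $\cNL$ share objects and morphisms under the term equivalence, $\alpha$ and $\beta$ are also natural isomorphisms for the functors between $\He$ and $\cNL$. The only nontrivial points are bookkeeping: checking that the term-defined $\ast$ and $\ra$ on $\K(H)$ coincide with the formulas in Remark \ref{imp}, and that morphisms preserved by the Nelson-algebra structure are exactly those preserved by the Nelson-lattice structure. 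Both are immediate from Remark \ref{br3}, so there is no substantive obstacle beyond this translation.
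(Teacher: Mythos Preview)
Your proposal is correct, and at the strategic level it coincides with the paper's argument: both derive the equivalence by transporting Theorem~\ref{adjHey} through the term equivalence of Remark~\ref{br3}, and both handle $\K$, $\alpha$, and the fact that $\beta_T$ preserves $\ra$ in exactly the way you describe. The one place the paper goes further is in its treatment of $\C$: rather than leaving the Heyting implication on $\C(T)$ as whatever the term equivalence produces, the paper computes directly that for $x,y\geq\ce$ in a centered Nelson lattice one has $x\ra y = x\ra_{\He} y$, so that $(\C(T),\we,\vee,\ra,\ce,1)$ is a Heyting algebra whose arrow is literally the \emph{restriction of the Nelson-lattice implication}. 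Your purely abstract route establishes the categorical equivalence equally well but does not pin down this identification; the paper's extra computation is what makes the description of $\C$ concrete, and it is precisely this description (restrict $\ra$ to the elements above $\ce$) that is reused when the construction is generalised to hemi-implicative structures in Section~\ref{s5}.
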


\begin{proof}
Let $H \in \He$. Then the centered Kleene algebra
$(\K(H),\we,\vee,{\sim},\ce,0,1)$ endowed with the two operations
given in Remark \ref{imp} is a centered Nelson lattice. It is
immediate that if $f$ is a morphism in $\He$,  then $\K(f)$ is a
morphism in $\cNL$.

Let $(T,\we,\vee,*,\ra, \ce,0,1)\in \cNL$. Taking into account
Remark \ref{br3} we deduce that $(T,\we, \vee,\Ra, {\sim},
\ce,0,1)\in \cN$, where $x \Ra y = x^{2} \ra y$. Moreover,
\begin{equation} \label{nanl1}
x \ra y = (x \Ra y) \we ({\sim} y \Ra {\sim x}).
\end{equation}
Let $x$, $y\geq \ce$. We will prove that $x \ra y = x \ra_{\He}
y.$ In order to show it, note that straightforward computations
show that
\begin{equation} \label{nanl2}
x\Ra y = x \ra_{\He} y.
\end{equation}
Besides, ${\sim} y \Rightarrow {\sim} x = {\sim} y \ra_{\He} (y \vee
{\sim} x)$. Since $y\geq \ce$ and ${\sim} x \leq \ce$, then $y \vee
{\sim} x = y$. Then ${\sim} y \Rightarrow {\sim} x = {\sim} y \ra_{\He}
y$. Hence, it follows from (\ref{nanl1}) and (\ref{nanl2}) that
$$ x\ra y = (x\ra_{\He} y) \we ({\sim} y \ra_{\He} y).$$
Note that $x\ra y = x\ra_{\He} y$ if and only if $x\ra_{\He} y
\leq {\sim} y \ra y$, which is equivalent to ${\sim} y \we
(x\ra_{\He} y) \leq y$. But ${\sim} y \leq \ce$ and $x \ra_{\He} y
\geq \ce$, so ${\sim} y \we (x\ra_{\He} y) = {\sim} y$. Hence,
${\sim} y \we (x\ra_{\He} y) \leq y$ if and only if ${\sim} y \leq
y$. Since $y\geq \ce$, then ${\sim} y \leq \ce$, so ${\sim} y \leq
y$. Then we have that $x\ra y = x \ra_{\He} y$. Thus, $(\C(T),
\we, \vee,\ra,\ce,1) \in \He$. Straightforward computations show
that if $g$ is a morphism in $\cNL$, then $\C(g)$ is a morphism in
$\He$.

It is also immediate that if $H\in \He$ then $\alpha_H$ is an
isomorphism in $\He$. Let $(T,\we, \vee,*,\ra, \ce,0,1)\in \cNL$.
It follows from Theorem \ref{adjHey} and Remark \ref{br3} that
$\beta_T$ preserves $\ra$. Therefore, $\beta_T$ is an isomorphism
in $\cNL$.
\end{proof}

The main goal of this paper is to find a generalization of
Proposition \ref{Nl} replacing the categories of Heyting algebras
and centered Nelson lattices by the categories of bounded
hemi-implicative semilattices and hemi-implicative lattices,
respectively. To make it possible, we start studying an
equivalence for a particular category of posets with first
element. Then we  employ it  to obtain an equivalence for the
category of bounded semilattices. Finally, taking into account the
last mentioned equivalence, we build up an equivalence for the
categories of bounded hemi-implicative semilattices and
hemi-implicative lattices, respectively, and for some of its full
subcategories.

\section{Kalman's functor for posets with bottom and for bounded
semilattices} \label{s3}

In this section, we generalize the equivalence given for the
category of bounded distributive lattices but replacing this
category by the category whose objects are posets with first
element and whose morphisms are maps which preserve finite
existing infima and the first element. Then we apply this
equivalence in order to establish an equivalence for the category
whose objects are bounded semilattices and whose morphisms are the
corresponding algebra homomorphisms. We start with some
preliminary definitions and properties. \vspace{1pt}

Let $(P,\leq,0)$ be a poset with first element, and let $(P \times
P,\preceq)$ be the poset with universe the cartesian product $P\times P$
where the order $\preceq$ is given by
\[
(a,b) \preceq (d,e)\; \text{if and only if}\; a\leq
d\;\text{and}\; e \leq b.
\]
In other words, $(P \times P,\preceq)$ is the direct product of
$(P,\leq)$ with its dual. Let $(P,\leq)$ and $(Q,\leq)$ be posets.
Let $f:(P,\leq) \ra (Q,\leq)$ be a function. We say that $f$
\emph{preserves finite existing infima} if for every $a$, $b\in P$
such that $a\we b$ exists in $P$ then $f(a)\we f(b)$ exists in $Q$
and $f(a\we b) = f(a)\we f(b)$.

\begin{defn}
The category $\Po$ has as objects the posets with first element
and has as morphisms the maps between posets with first element
which preserve  the finite existing infima and the first element.
\end{defn}
Note that every morphism in $\Po$ preserves the order. It follows
from the fact that morphisms preserve the finite existing infima.

Let $P \in \Po$. We define the following set:
\begin{equation}
\K(P): = \{(a,b)\in P\times P:  a\we
b\;\text{exists and}\; a\we b = 0\}.
\end{equation}
This set is the natural one to associate with the poset $P$ if we
aim to generalize the original Kalman's construction given for
bounded distributive lattices \cite{K}. To attain the
generalization we first order $\K(P)$ with the order induced by
the poset $(P \times P,\preceq)$ defined above. It is immediate
from the definition that  $\K(P)$ is closed under the unary
operation ${\sim}$ on $P \times P$ given by ${\sim}(a, b) = (b,
a)$, and that the element $\ce = (0, 0)$ belongs to $\K(P)$. Thus
we obtain the structure $\K(P) := (\K(P),\preceq,{\sim}, \ce).$

The following elemental lemma plays a fundamental role in
some proofs of this section.

\begin{lem} \label{l1}
Let  $(b,d) \in \K(P)$. The following conditions hold:
\begin{enumerate}[\normalfont 1.]
\item  For every $a \in P$, $(a,0) \we (b,d)$ exists in $\K(P)$ if
and only if $a\we b$ exists in $P$. If these conditions hold, then
$(a,0) \we (b, d) = (a\we b, d)$.
%\item If there
%exists $a\we b$ then there exists $(a,0)\we (b,d)$. Moreover,
%$(a,0)\we (b,d) = (a\we b,d)$.
\item $(b,d) \we (0,0)$ exists  in  $\K(P)$ and $(b,d) \we (0,0) =
(0,d)$. \item $(b,d) \vee (0,0)$ exists in  $\K(P)$  and $(b,d)
\vee (0,0) = (b,0).$
\end{enumerate}
\end{lem}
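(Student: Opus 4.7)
My plan is to verify each of the three parts directly from the definition of the order $\preceq$ on $\K(P)$, since this reduces every claim to bookkeeping about first and second coordinates together with the defining condition $x\we y = 0$ for membership in $\K(P)$. The only place requiring a genuine idea is the reverse implication of Part 1, where the existence of a meet in $\K(P)$ has to be turned into the existence of a meet in $P$.

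For Part 1, in the forward direction I would assume $a\we b$ exists in $P$ and first check that $(a\we b, d)\in \K(P)$: indeed $a\we b \leq b$, so $(a\we b)\we d$ exists and is bounded above by $b\we d = 0$, hence equals $0$. Then $(a\we b, d)\preceq (a,0)$ and $(a\we b, d)\preceq (b,d)$ by inspection, and for any common lower bound $(c,f)\in \K(P)$ the inequalities $c\leq a$, $c\leq b$ force $c\leq a\we b$, while $d\leq f$ holds by definition of $\preceq$; so $(c,f)\preceq (a\we b, d)$.

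The reverse direction is the main point. Assume the meet $(a,0)\we (b,d)$ exists in $\K(P)$, call it $(x,y)$. The key trick is to feed an arbitrary common lower bound of $a$ and $b$ into the meet in $\K(P)$: if $c\in P$ satisfies $c\leq a$ and $c\leq b$, then $c\we d \leq b\we d = 0$, so $c\we d$ exists and equals $0$, placing $(c,d)$ in $\K(P)$. Since $(c,d)\preceq (a,0)$ and $(c,d)\preceq (b,d)$, universality yields $(c,d)\preceq (x,y)$, hence $c\leq x$. Because $x$ is itself a common lower bound of $a$ and $b$ (from $(x,y)\preceq (a,0)$ and $(x,y)\preceq (b,d)$), this identifies $x$ as $a\we b$ in $P$. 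The equality $y=d$ then follows from comparing the two lower bounds $(a\we b,d)$ and $(x,y)=(a\we b, y)$: since $(a\we b, d)\preceq (x,y)$ we get $y\leq d$, and from $(x,y)\preceq (b,d)$ we get $d\leq y$.

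Parts 2 and 3 should require no additional ideas. For Part 2, any pair $(c,f)\preceq (0,0)$ satisfies $c\leq 0$, i.e.\ $c=0$, so the lower bounds of $(b,d)$ and $(0,0)$ are exactly the pairs $(0,f)$ with $f\geq d$; among these $(0,d)$ is greatest, and lies in $\K(P)$ because $0\we d = 0$. For Part 3, any upper bound $(c,f)$ of $(0,0)$ satisfies $f\leq 0$, i.e.\ $f=0$, so the upper bounds of $(b,d)$ and $(0,0)$ are exactly the pairs $(c,0)$ with $c\geq b$; the least such is $(b,0)$, which belongs to $\K(P)$ since $b\we 0 = 0$. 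I expect the only real obstacle to be the argument that recovers $a\we b$ in $P$ from the meet in $\K(P)$; everything else is a short unwinding of the definitions of $\preceq$ and $\K(P)$.
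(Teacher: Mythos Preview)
Your proposal is correct and follows essentially the same route as the paper: in Part~1 both arguments test the putative meet in $\K(P)$ against pairs of the form $(e,d)$ built from an arbitrary lower bound $e$ of $\{a,b\}$ in $P$, and Parts~2 and~3 are handled by the same coordinate inspection. Your version is if anything slightly more careful, explicitly justifying why $(a\we b,d)\in\K(P)$ and also verifying $y=d$ directly, points the paper leaves implicit.
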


\begin{proof}   In general, if  $y = (e, u) \in  \K(P)$,
we write $\pi_1(y)$ for the first coordinate and $\pi_2(y)$ for
the second coordinate (i.e., $\pi_1(y) = e$ and  $\pi_2(y) = u$).
Let $(b,d) \in \K(P)$. We proceed to the proof.

1. Suppose that $a \in P$ and $(a,0) \we (b,d)$ exists in $\K(P)$.
Let  $x := (a,0) \we (b,d)$. We have that $x \preceq (a,0)$ and
$x\preceq (b,d)$, so by the definition of $\preceq$ we obtain that
$\pi_1(x) \leq a$ and $\pi_1(x) \leq b$. Thus $\pi_1(x)$ is a
lower bound of the set $\{a,b\}$. Let now $e$ be a lower bound of
$\{a,b\}$, i.e., $e\leq a$ and $e\leq b$. The fact that $(b,d) \in
\K(P)$ implies that $(e,d) \in \K(P)$. Since $(e,d)\preceq (a,0)$
and $(e,d) \preceq (b,0)$,  then $(e,d)\preceq x$. Hence, $e\leq
\pi_1(x)$. Thus, $\pi_1(x) = a\we b$. Conversely, suppose that
$a\we b$ exists in $P$.  We have that $(a\we b,d) \in \K(P)$,
$(a\we b,d) \preceq (a,0)$,  and $(a\we b,d) \preceq (b,d)$. Let
$(e, u) \in \K(P)$ such that $(e,u)\preceq (a,0)$ and $(e,u)
\preceq (b,d)$, i.e., $e\leq a$, $e\leq b$ and $d\leq u$. Since $e
\leq a\we b$, then $(e,u) \preceq (a\we b, d)$. Hence we obtain
that  $(a,0)\we (b,d)$ exists in $\K(P)$,  and moreover $(a,0)\we
(b,d) = (a\we b,d)$.

2. To prove that $(b,d) \we (0,0)$ exists in  $\K(P)$, let us see
that $(0,d)$ is the infimum of $(b,d)$ and $(0,0)$. We have that
$(0,d)\preceq (b,d)$ and $(0,d)\preceq (0,0)$, so $(0,d)$ is a
lower bound of $\{(b,d), (0,0)\}$. Let $(e,u) \in \K(P)$ be a
lower bound of $\{(b,d), (0,0)\}$, so $(e,u) \preceq (b,d)$ and
$(e,u) \preceq (0,0)$. Hence, $e = 0$ and $d\leq u$, and so $(e,u)
\leq (0,d)$. Therefore we obtain that $(b,d) \we (0,0) = (0,d)$.

3. In a similar way it can be proved that $(b,d) \vee (0,0)$
exists in $\K(P)$ and is $(b,0)$.
\end{proof}

Motivated by properties of $\K(P)$ we give the following
definition.

\begin{defn}
\label{def:Kleeneposet}
A structure $(T,\leq,{\sim},\ce)$ is a \textit{Kleene poset} if
the following conditions hold:
\begin{enumerate}[\normalfont 1.]
\item $(T,\leq)$ is a poset. \item ${\sim}$ is an unary operation
on $T$ which is an involution, i.e., ${\sim} {\sim} x = x$ for
every $x\in T$ and is order reversing, i.e., for every  $x,y\in
T$, if $x\leq y$, then ${\sim} y \leq {\sim} x$. \item  $\ce =
{\sim} \ce$. \item  $x\vee \ce$ exists, for every $x \in T$. \item
For every $x\in T$, $(x\vee \ce) \we ({\sim} x \vee \ce)$ exists
and $(x\vee \ce) \we ({\sim} x \vee \ce) = \ce$. \item For every
$x,y \in T$, if $x\we \ce \leq y \we \ce$ and $x\vee \ce \leq y
\vee \ce$, then $x \leq y$.
\end{enumerate}
\end{defn}

The element $\ce$ of the previous definition will be also called
\emph{center}. The next lemma justifies the use of $\we$ in the
statement of the condition 6.

\begin{lem} \label{la}
Let $(T, \leq)$ be a poset satisfying $2$., $3$., $4$. and $5$.\ of
Definition \ref{def:Kleeneposet}.
\begin{enumerate}[\normalfont 1.]
\item Let $x,y \in T$. If  $x\we y$ exists, then ${\sim} x \vee
{\sim} y$ exists and ${\sim} x \vee {\sim} y = {\sim} (x\we y)$.
Analogously, if  $x\vee y$ exists,  then  ${\sim} x \wedge {\sim}
y$ exists and  ${\sim} x \wedge {\sim} y = {\sim} (x\vee y)$.
\item For every $x\in T$, $x\we \ce$ exists and  $x\we \ce =
{\sim} ({\sim} x \vee \ce)$. \item The element $\ce$ is unique.
\end{enumerate}
\end{lem}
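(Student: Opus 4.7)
Plan of proof. My plan is to dispatch the three items in the stated order, since item 2 uses item 1, and item 3 is an independent calculation that leans on the De~Morgan behavior made available by item 1.

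For item 1, I would argue entirely from the fact that ${\sim}$ is an order-reversing involution. Suppose $z = x \we y$ exists. Applying ${\sim}$ to $z \leq x$ and $z \leq y$ gives ${\sim} x \leq {\sim} z$ and ${\sim} y \leq {\sim} z$, so ${\sim} z$ is an upper bound of $\{{\sim} x, {\sim} y\}$. If $w$ is any other upper bound, then ${\sim} x \leq w$ and ${\sim} y \leq w$ yield ${\sim} w \leq x$ and ${\sim} w \leq y$, whence ${\sim} w \leq z$ and therefore ${\sim} z \leq w$. Hence ${\sim} z$ is the supremum of ${\sim} x, {\sim} y$, i.e.\ ${\sim}(x\we y) = {\sim} x \vee {\sim} y$. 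The dual statement for $\vee$ follows by replacing $x, y$ with ${\sim} x, {\sim} y$ in what we just proved and applying ${\sim}$ again, using ${\sim} {\sim} x = x$.

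For item 2, I would take an arbitrary $x\in T$ and note that by condition 4 the join ${\sim} x \vee \ce$ exists. Item 1 applied to this join yields that ${\sim}({\sim} x \vee \ce)$ coincides with the meet ${\sim}({\sim} x) \we {\sim} \ce$, which by the involutivity of ${\sim}$ and condition 3 equals $x \we \ce$. In particular, the meet $x \we \ce$ exists and is given by ${\sim}({\sim} x \vee \ce)$, which is exactly the claimed formula.

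For item 3, the key idea is to feed a second center into condition 5 and exploit self-duality. Suppose $\ce_1$ and $\ce_2$ are both elements satisfying conditions 3, 4, 5. Applying condition 5 for $\ce_1$ with $x := \ce_2$, and using $\sim \ce_2 = \ce_2$ from condition 3, both joins $\ce_2 \vee \ce_1$ and ${\sim}\ce_2 \vee \ce_1$ coincide, and therefore
\[
(\ce_2 \vee \ce_1) \we ({\sim} \ce_2 \vee \ce_1) \;=\; \ce_2 \vee \ce_1 \;=\; \ce_1,
\]
so $\ce_2 \leq \ce_1$. Reversing the roles of $\ce_1$ and $\ce_2$ gives $\ce_1 \leq \ce_2$, and hence $\ce_1 = \ce_2$. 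I expect the main conceptual (rather than technical) point to be spotting this uniqueness trick; once one thinks of substituting $x = \ce_2$ into condition 5 for $\ce_1$, the fixed-point property $\sim \ce_i = \ce_i$ collapses the expression and uniqueness drops out by symmetry. Note that condition 6 is never invoked in this lemma, which is consistent with the fact that the statement only assumes 2, 3, 4, 5.
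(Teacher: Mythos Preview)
Your proof is correct and follows essentially the same approach as the paper's. The paper dismisses items 1 and 2 as ``straightforward computations'' (which you spell out), and its argument for item 3 is identical to yours: plug a second center into condition 5, use ${\sim}\ce' = \ce'$ to collapse the meet to $\ce' \vee \ce = \ce$, and conclude by symmetry.
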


\begin{proof}
Straightforward computations show the first two assertions. In
order to prove that the center is unique, let $\ce$ and $\ce'$ be
centers. Then $\ce = (\ce' \vee \ce)\we ({\sim} \ce' \vee \ce)$.
Since ${\sim} \ce' = \ce'$, then $\ce = \ce' \vee \ce$. Hence,
$\ce'\leq \ce$. Analogously, $\ce\leq \ce'$. Thus, $\ce = \ce'$.
\end{proof}

In what follows we introduce the category of Kleene posets.

\begin{defn}
We denote by $\KPo$ the category whose objects are the Kleene
posets and whose morphisms are the maps $g$ between Kleene posets
that preserve the order, the involution and the finite existing
infima over elements greater than or equal to the center.
\end{defn}

Note that if $g:T\ra U$ is a morphism in $\KPo$,  $\ce$ is the
center of $T$ and $\ce'$ is the center of $U$, then $g(\ce) =
\ce'$. It follows from the fact that $g(\ce) = {\sim} g(\ce)$ and
the fact that the center is unique.

If $T\in \KPo$,  we define $\C(T)$ as in the case of centered
Kleene algebras. If $f$ is a morphism in $\Po$ and $g$ is a
morphism in $\KPo$ we define $\K(f)$ and $\C(g)$ as in Section
\ref{int}, respectively.

\begin{lem}
\begin{enumerate} [\normalfont (a)]
\item If $(P,\leq,0) \in \Po$, then $(\K(P),\preceq,{\sim}, \ce) \in
\KPo$. \item If $f \in \Po$, then $\K(f) \in \KPo$.
\end{enumerate}
\end{lem}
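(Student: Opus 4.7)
The plan for part (a) is to verify the six clauses of Definition \ref{def:Kleeneposet} for the structure $(\K(P),\preceq,{\sim},\ce)$ one by one, drawing on Lemma \ref{l1}. Clause 1 is immediate since $\preceq$ restricted to $\K(P)$ is the restriction of a product order. Clauses 2 and 3 are straightforward calculations with ordered pairs: ${\sim}{\sim}(a,b)=(a,b)$, the map reverses $\preceq$ because swapping coordinates swaps the roles of the two orders, and $\ce=(0,0)$ visibly satisfies ${\sim}\ce=\ce$. Clause 4 is given by Lemma \ref{l1}(3), which says $(a,b)\vee(0,0)=(a,0)$ always exists in $\K(P)$.

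For clause 5, given $x=(a,b)\in\K(P)$, Lemma \ref{l1}(3) gives $x\vee\ce=(a,0)$ and ${\sim}x\vee\ce=(b,0)$, both of which are in $\K(P)$. I then apply Lemma \ref{l1}(1) with the pair $(b,0)$ in the role of $(b,d)$ there (noting $b\we 0=0$ exists): since $a\we b$ exists in $P$ and equals $0$ by the very definition of $\K(P)$, the infimum $(a,0)\we(b,0)$ exists in $\K(P)$ and equals $(a\we b,0)=(0,0)=\ce$, which is exactly clause 5. For clause 6, given $x=(a,b)$ and $y=(d,e)$, Lemma \ref{l1}(2) gives $x\we\ce=(0,b)$ and $y\we\ce=(0,e)$, so the hypothesis $x\we\ce\preceq y\we\ce$ unpacks to $e\leq b$ in $P$, while $x\vee\ce\preceq y\vee\ce$ unpacks to $a\leq d$; together these are precisely the definition of $(a,b)\preceq(d,e)$.

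For part (b), given $f:P\to Q$ in $\Po$ and $(a,b)\in\K(P)$, I first check that $\K(f)(a,b)=(f(a),f(b))$ lies in $\K(Q)$: since $a\we b=0$ exists in $P$ and $f$ preserves finite existing infima and the bottom, $f(a)\we f(b)=f(a\we b)=f(0)=0$. Preservation of ${\sim}$ is obvious, and preservation of the order $\preceq$ is immediate from the fact that morphisms in $\Po$ are order-preserving (as noted just after the definition of $\Po$). The only remaining point is preservation of infima over the center: an element of $\K(P)$ is $\succeq\ce$ iff it has the form $(a,0)$, so I must show that if $(a,0)\we(b,0)$ exists in $\K(P)$ then $(f(a),0)\we(f(b),0)$ exists in $\K(Q)$ and equals $\K(f)$ of the former. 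By Lemma \ref{l1}(1) the existence on the $P$-side is equivalent to the existence of $a\we b$ in $P$, in which case $(a,0)\we(b,0)=(a\we b,0)$; since $f$ preserves this infimum in $Q$, Lemma \ref{l1}(1) applied in $\K(Q)$ gives $(f(a),0)\we(f(b),0)=(f(a)\we f(b),0)=(f(a\we b),0)$, which is $\K(f)(a\we b,0)$, as required.

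The argument is essentially a bookkeeping exercise; no step presents a real obstacle, but the one that needs the most care is clause 5 in part (a), because it is where both the hypothesis $a\we b=0$ encoded in the definition of $\K(P)$ and the first clause of Lemma \ref{l1} are genuinely used, and the same lemma provides the computational engine for the nontrivial part of (b).
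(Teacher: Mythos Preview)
Your proposal is correct and follows essentially the same approach as the paper: part (a) is handled by verifying the clauses of the Kleene-poset definition via Lemma~\ref{l1} (the paper merely says ``straightforward computations based on Lemma~\ref{l1}'', and you have spelled these out), and part (b) matches the paper's argument almost step for step, checking well-definedness, preservation of order and involution, and preservation of infima above the center via Lemma~\ref{l1}(1).
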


\begin{proof}
It follows from straightforward computations based on Lemma
\ref{l1} that if $(P,\leq,0) \in \Po$, then
$(\K(P),\preceq,{\sim},$ $ \ce) \in \KPo$. In what follows we will
prove that if $f:P \ra Q$ is a morphism in $\Po$, then
$\K(f):\K(P)\ra \K(Q)$ is a morphism in $\KPo$.

By the definition of $\Po$ we have that if $(a,b) \in \K(P)$, then
$f(a) \wedge f(b)$ exists in $Q$ and is $f(a\we b) = f(0)$, which
is $0$ in $Q$. Thus, $(f(a),f(b))\in \K(Q)$. Therefore, $\K(f)$ is
indeed a map from $\K(P)$ to $\K(Q)$.

Since $f$ preserves the order, then $\K(f)$ preserves the order. It
is immediate that $\K(f)$ preserves the involution.

Let $(a,0)$ and $(b,0)$ be elements such that $(a,0) \we (b,0)$
exists. It follows from Lemma \ref{l1} that  $a\we b$ exists and
$(a,0) \we (b,0) = (a\we b,0)$. Then, $f(a)\we f(b)$ exists and
$f(a\we b) = f(a)\we f(b)$. Again by Lemma \ref{l1} we obtain that
$(f(a),0)\we (f(b),0)$ exists and $(f(a),0)\we (f(b),0) = (f(a)\we
f(b),0)$. Hence, $\K(f)((a,0)\we (b,0)) = \K(f)(a,0) \we
\K(f)(b,0)$.

Therefore, $\K(f)$ is a morphism in $\KPo$.
\end{proof}

Using the previous lemma, it is immediate to see that $\K$ defines
a functor from $\Po$ to $\KPo$.

%\begin{rem}
Let $P \in \Po$. The map $\alpha_P:P \ra \C(\K(P))$ given by
$\alpha_{P}(a,b) = (a,0)$ is easily seen to be an isomorphism in
$\Po$. The fact that $\alpha_P$ is morphism is a consequence of
Lemma \ref{l1}.
%\end{rem}

The proof of the following lemma is immediate. It easily follows
from it  that $\C: \KPo \ra \Po$ is a functor.

\begin{lem}
\begin{enumerate}[\normalfont (a)]
\item[]
\item If $(T, \leq, {\sim},\ce) \in \KPo$ then $(\C(T),\leq,\ce) \in
\Po$. \item If $g \in \KPo$ then $\C(g) \in \Po$.
\end{enumerate}
\end{lem}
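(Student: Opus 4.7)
The plan is to dispose of (a) with a one-line verification and then focus on (b), where the only real issue will be reconciling infima taken inside the subposet $\C(T)$ with infima taken in $T$ among elements greater than or equal to the center.

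For (a), since $\C(T)$ inherits its order from $T$ it is automatically a poset, and by definition every $x \in \C(T)$ satisfies $x \geq \ce$, so $\ce$ is the first element; hence $(\C(T),\leq,\ce) \in \Po$.

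For (b), take a morphism $g:T\to U$ in $\KPo$ with centers $\ce$ and $\ce'$. As remarked just after the definition of $\KPo$, $g(\ce) = \ce'$, so $\C(g)$ is well defined as a map $\C(T) \to \C(U)$ and preserves the first element; preservation of the order is inherited from $g$. The heart of the argument is the following matching claim: for $a, b \in \C(T)$, the infimum $a\we b$ exists in $\C(T)$ if and only if it exists in $T$, and in that case the two coincide. The easy direction is that if $m = a\we b$ exists in $T$, then $\ce \leq a,b$ forces $\ce \leq m$, so $m \in \C(T)$ and it is also the infimum in $\C(T)$. For the converse, suppose $m$ is the infimum in $\C(T)$ and let $c \in T$ be any lower bound of $\{a, b\}$ in $T$. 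Using condition 4 of Definition \ref{def:Kleeneposet}, form $c\vee \ce$; it belongs to $\C(T)$ and, since $a, b \geq \ce$, it is still a lower bound of $\{a, b\}$, whence $c \leq c\vee \ce \leq m$. Hence $m$ is the infimum of $\{a,b\}$ in $T$ as well.

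Once this matching is in place, the $\KPo$-condition that $g$ preserves finite existing infima of elements $\geq \ce$ immediately yields that $\C(g)$ preserves finite existing infima in $\C(T)$. Functoriality of $\C$ (i.e., $\C(g\circ h) = \C(g)\circ \C(h)$ and $\C(\mathrm{id}_T) = \mathrm{id}_{\C(T)}$) then follows at once since $\C(g)$ is a set-theoretic restriction of $g$. The only step that requires any thought is the matching claim, and its crucial input is condition 4 of Definition \ref{def:Kleeneposet}, which supplies the lift $c \mapsto c\vee \ce$ needed to move an arbitrary lower bound in $T$ into $\C(T)$.
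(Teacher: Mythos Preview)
Your proof is correct. The paper itself gives no argument for this lemma beyond declaring that ``the proof of the following lemma is immediate,'' so there is nothing to compare against at the level of method. Your matching claim---that for $a,b\in\C(T)$ the infimum computed in $\C(T)$ exists if and only if it exists in $T$, and the two agree---is exactly the point one has to check to pass from the $\KPo$-morphism hypothesis (which concerns infima in $T$ among elements $\geq\ce$) to the $\Po$-morphism conclusion (which concerns infima in the subposet $\C(T)$), and your use of condition~4 of Definition~\ref{def:Kleeneposet} to lift an arbitrary lower bound $c$ to $c\vee\ce\in\C(T)$ is the right mechanism.
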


\begin{defn}\label{DCK}
For $T\in \KPo$ we also name $(\CK)$
 to the following condition
\begin{equation}
\label{eq-CK-2}
(\forall x, y \geq c)(\text{if } x\we y \text{
exists and } x\we y = \ce, \text{ then } (\exists z)(z\vee \ce = x
\ \& \ {\sim} z \vee \ce = y)). \tag{$\CK$}
\end{equation}
\end{defn}

\begin{rem}
In Section \ref{Kc}, the condition $(\CK)$ was defined for
centered Kleene algebras. Notice that if
$(T,\we,\vee,{\sim},\ce,0,1)$ is a centered Kleene algebra,  then
the structure $(T,\leq,{\sim},\ce)$ is a Kleene poset, where
$\leq$ is the order associated with the lattice $(T,\we,\vee)$. In
particular, we have that $(T,\we,\vee,{\sim},\ce,0,1)$ satisfies
the condition $(\CK)$ given in Section \ref{Kc} if and only if
$(T,\leq,{\sim},\ce)$ satisfies the condition $(\CK)$ given in
Definition \ref{DCK}. This fact justifies the use of the same
label for both conditions.
\end{rem}

As in the case of bounded distributive lattices, if $(P,\leq,0)
\in \Po$, then the structure $(\K(P),\preceq,{\sim}, \ce)$
satisfies $(\CK)$.

\begin{rem}\label{r0}
For $T\in \KPo$ and $x\in T$ we have $(x\vee \ce) \we ({\sim} x
\vee \ce) = \ce$, which shows that the map $\beta_T:T \ra
\K(\C(T))$ defined by $\beta_{T}(x) = (x\vee \ce,{\sim} x \vee
\ce)$ is a well defined map. We also have that $T$ satisfies
$(\CK)$ if and only if $\beta_T$ is surjective.
\end{rem}

Let $f:(P,\leq) \ra (Q,\leq)$ be an order isomorphism, i.e., a
bijective map such that for every $a$, $b\in P$, $a\leq b$ if and
only if $f(a)\leq f(b)$. Let $a$, $b\in P$ such that $a\we b$
exists. Then $f(a) \we f(b)$ exists and $f(a) \we f(b) = f(a\we
b)$. Straightforward computations prove the following remark.

\begin{rem} \label{r1}
Let $f:T\ra U$ be a morphism in $\KPo$. If $f$ is an order
isomorphism, then $f$ is an isomorphism in $\KPo$.
\end{rem}

\begin{lem}
If $T \in \KPo$,  then $\beta_T$ is an injective morphism in $\KPo$.
Moreover, if $T$ satisfies $(\CK)$, then $\beta_T$ is an
isomorphism in $\KPo$.
\end{lem}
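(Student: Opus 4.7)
The statement decomposes into two claims: first, that $\beta_T$ is always an injective morphism in $\KPo$; second, that under $(\CK)$ it is in fact an isomorphism. The plan is to verify the three morphism requirements (order preservation, commutation with ${\sim}$, preservation of infima of pairs lying above the center), then deduce injectivity from condition 6 of Definition \ref{def:Kleeneposet}, and finally combine Remark \ref{r0} with Remark \ref{r1} to upgrade a bijective morphism to an isomorphism.

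For the morphism conditions, order preservation follows at once from the definition of $\preceq$ on $\K(\C(T))$, since $x\leq y$ yields $x\vee\ce\leq y\vee\ce$ directly and ${\sim} y\vee\ce\leq{\sim} x\vee\ce$ by the order-reversing property of ${\sim}$. Compatibility with the involution is a one-line unravelling using ${\sim}{\sim} x = x$. The substantive point is preservation of finite existing infima of elements $\geq\ce$: if $x,y\geq\ce$ in $T$ and $x\we y$ exists in $T$, then $x\we y\geq\ce$, so this infimum also computes the infimum in $\C(T)$. Moreover $\beta_T(x)=(x,\ce)$ and $\beta_T(y)=(y,\ce)$ because ${\sim} x,{\sim} y\leq\ce$. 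Applying Lemma \ref{l1} inside $\K(\C(T))$, whose underlying poset is $\C(T)$ with first element $\ce$, yields $(x,\ce)\we(y,\ce)=(x\we y,\ce)=\beta_T(x\we y)$, as required.

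For injectivity, suppose $\beta_T(x)=\beta_T(y)$, so $x\vee\ce=y\vee\ce$ and ${\sim} x\vee\ce={\sim} y\vee\ce$. Applying ${\sim}$ to the second equation together with Lemma \ref{la} (parts 1 and 2) turns it into $x\we\ce=y\we\ce$. Condition 6 of Definition \ref{def:Kleeneposet}, used symmetrically in both directions, then forces $x=y$. Under the extra hypothesis $(\CK)$, Remark \ref{r0} gives surjectivity of $\beta_T$, so $\beta_T$ is a bijective morphism in $\KPo$. To invoke Remark \ref{r1} it suffices to check that $\beta_T$ is also order-reflecting: if $\beta_T(x)\preceq\beta_T(y)$, the same manipulation as in the injectivity argument (now producing inequalities rather than equalities, namely $x\vee\ce\leq y\vee\ce$ and $x\we\ce\leq y\we\ce$) combined with condition 6 yields $x\leq y$. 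Hence $\beta_T$ is an order isomorphism, and Remark \ref{r1} concludes that it is an isomorphism in $\KPo$.

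The only delicate point is the infimum-preservation step, where one must keep track of the three distinct orders in play (on $T$, on $\C(T)$, and on $\K(\C(T))$) and notice that the first element of $\C(T)$ is $\ce$ itself, so Lemma \ref{l1} can be applied with $\ce$ in the role of $0$. Everything else reduces to a clean use of condition 6 of Definition \ref{def:Kleeneposet} and the order-theoretic content of Remarks \ref{r0} and \ref{r1}.
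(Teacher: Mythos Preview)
Your proof is correct and follows essentially the same approach as the paper's: verify order preservation, involution preservation, and preservation of infima of elements above $\ce$ via Lemma~\ref{l1}, then use condition~6 of Definition~\ref{def:Kleeneposet} for injectivity/order-reflection, and finish with Remarks~\ref{r0} and~\ref{r1}. The only organizational difference is that the paper proves the order-embedding property ($x\leq y\iff\beta_T(x)\preceq\beta_T(y)$) in one stroke, whereas you first establish injectivity from equalities and later redo the argument with inequalities for order-reflection; the paper's packaging is slightly more economical but the content is identical.
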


\begin{proof}
In order to show that $\beta_T$ preserves the order, let $x,y \in
T$ such that $x\leq y$. Then $x\vee \ce \leq y \vee \ce$ and
${\sim} x \vee \ce \geq {\sim} y \vee \ce$, which means that
$\beta_T(x) \leq \beta_T(y)$. Thus, $\beta_T$ preserves the order.
It is immediate that $\beta_T$ preserves the involution. Let now
$x,y \in T$ such that $x,y \geq \ce$. Assume that  $x\we y$
exists. So $\beta_T(x\we y) = (x\we y, \ce)$. Moreover, we have
that $\beta_T(x) = (x,\ce)$ and $\beta_T(y) = (y,\ce)$. Thus, it
follows from Lemma \ref{l1} that  $(x,c) \we (y,c)$ exists and
$(x,\ce) \we (y,\ce) = (x\we y, \ce)$. Then $\beta_T(x \we y) =
\beta_T(x) \we \beta_T(y)$. Hence, $\beta_T$ is a morphism in
$\KPo$.

Now we will prove that for every $x,y \in T$, $x\leq y$ if and
only if $\beta_T(x) \leq \beta_T(y)$. The fact that if $x\leq y$,
then $\beta_T(x) \leq \beta_T(y)$ was proved before. In order to
prove the converse, suppose that $\beta_T(x) \leq \beta_T(y)$,
i.e., $x\vee \ce \leq y \vee \ce$ and $x \we \ce \leq y \we \ce$.
So, by the definition of Kleene poset  we have $x\leq y$. In particular,
$\beta_T$ is an injective map.

Finally, assume that $T$ satisfies $(\CK)$. It follows from
remarks \ref{r0} and \ref{r1} that $\beta_T$ is an isomorphism in
$\KPo$.
\end{proof}

Straightforward calculations prove that if $f:P\ra Q$ is a
morphism in $\Po$ then $ (\C \circ \K)(f) \circ \alpha_{P} =
\alpha_{Q} \circ f$, and if $g:T \ra U$ is a morphism in $\KPo$
then $ (\K \circ \C)(g) \circ \beta_{T} = \beta_U \circ g$.

\begin{thm} \label{Kfp}
Let $\KPop$ be the full subcategory of $\KPo$ whose objects
satisfy the condition $(\CK)$. The functors $\K$ and $\C$
establish a categorical equivalence between $\Po$ and $\KPop$
with natural isomorphisms  $\alpha$ and $\beta$.
\end{thm}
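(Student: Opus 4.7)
The plan is essentially to assemble the ingredients already established in the section. First, I would observe that $\K$, viewed as a functor $\Po \to \KPo$, actually takes values in the full subcategory $\KPop$: this is the remark made just before the statement that if $(P,\leq,0)\in \Po$ then $(\K(P),\preceq,{\sim},\ce)$ satisfies $(\CK)$. Hence $\K$ restricts to a functor $\Po \to \KPop$, and $\C$ restricts to a functor $\KPop \to \Po$ simply by virtue of $\KPop$ being a full subcategory.

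Next I would assemble the two natural isomorphisms. On the $\Po$ side, for every $P\in \Po$ the map $\alpha_P\colon P\to \C(\K(P))$ defined by $\alpha_P(a)=(a,0)$ has already been shown to be an isomorphism in $\Po$, so $\alpha$ is a family of isomorphisms. On the $\KPop$ side, the preceding lemma shows that for every $T\in \KPo$ the map $\beta_T$ is an injective morphism in $\KPo$, and that when $T$ satisfies $(\CK)$ it is an isomorphism in $\KPo$; since $\KPop$ is the full subcategory cut out by exactly the condition $(\CK)$, $\beta_T$ is in fact an isomorphism in $\KPop$ for every $T\in \KPop$.

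The naturality of $\alpha$ and $\beta$ is the content of the two identities
$(\C\circ \K)(f)\circ \alpha_P = \alpha_Q\circ f$ and
$(\K\circ \C)(g)\circ \beta_T = \beta_U\circ g$,
which were recorded by straightforward computations immediately before the theorem. Together with the fact that each $\alpha_P$ and each $\beta_T$ is an isomorphism in the respective category, this says exactly that $\alpha\colon \mathrm{Id}_{\Po} \Rightarrow \C\circ \K$ and $\beta\colon \mathrm{Id}_{\KPop}\Rightarrow \K\circ \C$ are natural isomorphisms, which is the definition of a categorical equivalence between $\Po$ and $\KPop$ implemented by the adjoint pair $(\K,\C)$.

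The main point of substance, all of which has been handled in the lemmas above, is to ensure that the construction $\K$ stays inside $\KPop$ and that the restriction of $\beta$ to $\KPop$ is actually a family of isomorphisms; the only place where any difficulty could arise is the surjectivity clause of $\beta_T$, but this is precisely why $(\CK)$ was formulated as in Definition \ref{DCK}, and the reformulation via Remark \ref{r0} together with Remark \ref{r1} has already reduced it to an order-isomorphism check. So the proof is short: invoke the lemma giving $\alpha_P$ an isomorphism, the lemma giving $\beta_T$ an isomorphism under $(\CK)$, the two naturality identities, and conclude.
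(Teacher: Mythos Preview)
Your proposal is correct and follows exactly the approach the paper takes: the theorem is stated without a separate proof because all the ingredients (that $\K(P)$ satisfies $(\CK)$, that $\alpha_P$ is an isomorphism, that $\beta_T$ is an isomorphism under $(\CK)$, and the two naturality identities) have already been established in the preceding lemmas and remarks. Your write-up simply makes explicit the assembly that the paper leaves implicit.
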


Let $\MS$ be the category whose objects are bounded semilattices
and whose morphisms are the algebra homomorphisms.

\begin{defn}
We write $\KMS$ to denote the category whose objects are the
structures $(T, \leq, {\sim},\ce, 0,1)$ which satisfy the
following conditions:
\begin{enumerate}
\item[$\KMo$] $(T, \leq, {\sim},\ce) \in \KPo$. \item[$\KMtw$] $0$
is the first element of $(T,\leq)$ and $1$ is the greatest element
of $(T,\leq)$. \item[$\KMth$] For every $x,y \in T$, if $x\geq
\ce$, then  $x\we y$ exists. \item[$\KMfo$] For every $x,y \in T$,
if $x\geq \ce$, then $(x\we y)\vee \ce = x\we (y\vee \ce)$.
\end{enumerate}
The morphisms of $\KMS$ are maps $g$ between objects of $\KMS$
which preserve the order, the involution and such that for every
$x,y\geq \ce$, $g(x\we y) = g(x) \we g(y)$.
\end{defn}

We write $\KMSp$ to denote the full subcategory of $\KMS$ whose
objects satisfy the condition $(\CK)$. Note that in presence of
the condition $\KMo$ we can replace the condition $\KMth$ by the
following condition: $(x\vee \ce)\we y$ exists for every $x$, $y$.
Also note that in presence of the conditions $\KMo$ and $\KMth$,
the condition $\KMfo$ can be replaced by the condition $((x\vee
\ce)\we y)\vee \ce = (x\vee \ce)\we (y\vee \ce)$ for every $x$,
$y$.

Recall that $\MS$ is the category whose objects are bounded
semilattices and whose morphisms are the algebra homomorphisms
between them.

\begin{cor} \label{cms}
The functors $\K$ and $\C$ establish a categorical equivalence between
$\MS$ and $\KMSp$ with natural isomorphisms $\alpha$ and  $\beta$.
\end{cor}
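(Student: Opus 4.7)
The plan is to deduce the corollary from Theorem \ref{Kfp} by restricting the equivalence between $\Po$ and $\KPop$ to the full subcategories $\MS$ and $\KMSp$. Since $\MS$ sits inside $\Po$ and $\KMSp$ is defined as a subcategory of $\KPop$, no genuinely new categorical argument is needed; I only need to verify that the functors $\K$ and $\C$ (and the natural transformations $\alpha$, $\beta$) respect the additional structure on objects and morphisms.

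First I would check that $\K$ sends objects of $\MS$ to objects of $\KMSp$. Given $H \in \MS$, the condition $\KMo$ is immediate from the work already done, and $(\CK)$ is witnessed for $x=(a,0)$, $y=(b,0)$ with $x \we y = \ce$ by the element $z = (a,b) \in \K(H)$, which makes sense precisely because $a \we b = 0$. Condition $\KMtw$ holds with bottom $(0,1)$ and top $(1,0)$. For $\KMth$, every element $\geq \ce$ has the form $(a,0)$, so Lemma \ref{l1}(1) yields $(a,0) \we (b,d) = (a\we b,d)$, which exists because $H$ is a semilattice. For $\KMfo$ I would compute both sides using parts (1) and (3) of Lemma \ref{l1}, obtaining $(a\we b,0)$ in each case. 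In the other direction, for $T \in \KMSp$, the set $\C(T) = \{x : x \geq \ce\}$ is closed under the $\we$ supplied by $\KMth$ (since $\ce \leq x,y$ implies $\ce \leq x\we y$), has bottom $\ce$ and top $1$, and is therefore a bounded semilattice.

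For morphisms, if $f : H \to G$ is a morphism in $\MS$ then $\K(f)$ is already a morphism in $\KPop$ by Theorem \ref{Kfp}; the additional requirement of preserving infima of pairs of elements $\geq \ce$ reduces, via Lemma \ref{l1}(1), to the identity $\K(f)((a,0) \we (b,0)) = (f(a)\we f(b),0)$, which holds because $f$ preserves $\we$. Conversely, if $g : T \to U$ is a morphism in $\KMSp$, then $\C(g)$ preserves $\we$ by the very definition of a $\KMS$-morphism, preserves the bottom $\ce$ by uniqueness of the center, and preserves the top $1 = {\sim} 0$ via preservation of the involution (together with preservation of $0$, which is built into the definition of morphism between bounded objects).

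Finally, the natural isomorphisms $\alpha$ and $\beta$ from Theorem \ref{Kfp} restrict directly: $\alpha_H(a) = (a,0)$ is manifestly a bounded semilattice isomorphism $H \to \C(\K(H))$, and $\beta_T(x) = (x\vee \ce, {\sim} x\vee \ce)$ is already an isomorphism in $\KPop$ whose components lie in $\KMSp$. The only (minor) obstacle I anticipate is the bookkeeping involved in $\KMfo$ and in confirming that morphisms preserve the bounds; no new idea beyond Theorem \ref{Kfp} is required.
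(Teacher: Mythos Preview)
Your proposal is correct and follows essentially the same route as the paper: verify $\KMo$--$\KMfo$ for $\K(H)$ via Lemma~\ref{l1}, note that $\C(T)$ is a bounded semilattice, and then invoke Theorem~\ref{Kfp} for the rest. One small caution: your claim that preservation of $0$ is ``built into the definition of morphism between bounded objects'' is not literally supported by the paper's definition of $\KMS$-morphisms (which only asks for preservation of order, involution, and infima above $\ce$); the paper itself glosses over this point, so your treatment is no less complete than the original.
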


\begin{proof}
Let $H\in \MS$. The condition $\KMo$ for $\K(H)$ follows from
Theorem \ref{Kfp}. We also have that $(0,1)$ is the first element
of $\K(H)$ and $(1,0)$ is the last element of $\K(H)$, i.e., we have the
condition $\KMtw$. Let $x,y \in \K(H)$ with $x\geq \ce$. Then
there are $a,b,d\in H$ such that $x = (a,0)$, $y = (b,d)$ and
$b\we d = 0$. Since in particular $a\we b$ exists, then it follows
from Lemma \ref{l1} that  $x \we y$ exists and $x\we y = (a \we
b,d)$. Then we have proved $\KMth$. Again taking into account
Lemma \ref{l1} we deduce that $((a,0)\we (b,d))\vee (0,0) = (a\we
b,0)$. The mentioned lemma also implies that $(b,d) \vee (0,0) =
(b,0)$ and $(a,0) \we (b,0) = (a\we b,0)$. Thus, $(x\we y)\vee \ce
= x\we (y\vee \ce)$, which is $\KMfo$. Therefore $\K(H) \in \KMS$. It
is immediate that if $T\in \KMS$, then $\C(T) \in \MS$. The rest of
the proof follows from Theorem \ref{Kfp}.
\end{proof}

\section{The variety of hemi-implicative semilattices (lattices)}
\label{s4}

In this section we recall definitions and properties about the
algebras we will consider later: hemi-implicative semilattices
(lattices) \cite{SM2}, Hilbert algebras with infimum \cite{Fig},
implicative semilattices \cite{N} and semi-Heyting algebras
\cite{Sanka}.

\begin{defn}
A \emph{hemi-implicative semilattice} is an algebra $(H,\we,\ra,
1)$ of type $(2,2,0)$  which satisfies the following conditions:
\begin{enumerate}
\item[$\Won$] $(H,\we,1)$ is an upper bounded semilattice,
\item[$\Wtw$] for every $a,b,d \in H$, if $a\leq b\ra d$ then $a
\we b \leq  d$, \item[$\Wth$]$a\ra a = 1$ for every $a\in H$.
\end{enumerate}
A \emph{bounded hemi-implicative semilattice} is an algebra
$(H,\we,\ra,0,1)$ of type $(2,2,0,0)$ such that $(H,\we,\ra,1)$ is
a hemi-implicative semilattice and $0$ is the first element with
respect to the order. A \emph{hemi-implicative lattice} is an
algebra $(H,\we,\vee,\ra,0,1)$ of type $(2,2,2,0,0)$ such that
$(H,\we,\vee,0,1)\in \DL$ and $(H,\we,\ra,1)$ is a
hemi-implicative semilattice.
\end{defn}

Hemi-implicative semilattices were called weak implicative
semilattices in \cite{SM2}. We write $\hIS$ for the category of
bounded hemi-implicative semilattices and $\hBDL$ for the category
of hemi-implicative lattices.

\begin{rem}
If $(H,\we)$ is a semilattice and $\ra$ a binary operation, then
$H$ satisfies $\Wtw$ if and only if for every $a,b \in A$, $a\we
(a\ra b)\leq b$. Therefore, the class of hemi-implicative
semilattices (lattices) is a variety \cite{SM2}.
\end{rem}

The variety of Hilbert algebras is the algebraic counterpart of
the implicative fragment of Intuitionistic Propositional Logic.
These algebras were introduced in the early 50's by Henkin and
Skolem for some investigations on the implication in
intuitionistic logic and other non-classical logics \cite{R}. In
the 1960s, they were studied especially by Horn and Diego
\cite{D}.

\begin{defn}
A \emph{Hilbert algebra} is an algebra $(H,\ra, 1)$ of type $(2,0)$
 that satisfies the following conditions:
\begin{enumerate}[\normalfont 1)]
\item $a\ra (b\ra a) = 1$. \item $a\ra (b\ra d) = (a\ra b)\ra
(a\ra d)$. \item If $a\ra b = b\ra a = 1$, then $a = b$.
\end{enumerate}
\end{defn}

It is well a known fact that Hilbert algebras form a variety. In
every Hilbert algebra we have the partial order defined by $a\leq
b$ if and only if $a\ra b = 1$. In particular, $a\ra a = 1$ for
every $a$.

\begin{ex} \label{ej1}
In any poset $(H,\leq)$ with last element $1$ it is possible to
define the following binary operation:
\[
a \ra b =
\begin{cases}
 1, &\text{if $a \leq b$;}\\
 b,  &\text{if $a\nleq b$.}
 \end{cases}
\]
The structure $(H,\ra ,1)$ is a Hilbert algebra.
\end{ex}

For the following definition see \cite{Fig}.

\begin{defn}
An algebra $(H,\we,\ra,1)$ is a \emph{Hilbert algebra with infimum} if
the following conditions hold:
\begin{enumerate}[\normalfont 1)]
\item $(H,\ra,1)$ is a Hilbert algebra, \item $(H,\we,1)$ is an
upper bounded semilattice, \item For every $a,b \in H$, $a\leq b$ if and
only if $a\ra b = 1$, where $\leq$ is the semilattice order.
\end{enumerate}
An algebra $(H,\we,\ra,0,1)$ of type $(2,2,0,0)$ is a
\textit{bounded Hilbert algebra with infimum} if $(H,\we,\ra,1)$
is a Hilbert algebra with infimum and $0$ is the first element
with respect to the induced order.
\end{defn}

In \cite{Fig} it is proved that the class of Hilbert algebras with
infimum is a variety. We note that this result also follows from the
results given by P. M. Idziak in \cite{I} for BCK-algebras with
lattice operations. The following proposition can be found in
\cite[Theorem 2.1]{Fig}.

\begin{prop} \label{pHil}
Let $(H,\we,\ra,1)$ be an algebra of type $(2,2,0)$. Then
$(H,\we,\ra,1)$ is a Hilbert algebra with infimum if and only if
for every $a,b,d\in H$ the following conditions hold:
\begin{enumerate} [\normalfont a)]
\item $(H,\ra,1)$ is a Hilbert algebra, \item $(H,\we,1)$ is an upper
bounded semilattice, \item $a\we (a\ra b) = a\we b$, \item $a \ra
(b\we d) \leq (a\ra b) \we (a\ra d)$.
\end{enumerate}
\end{prop}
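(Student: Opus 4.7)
The plan is to prove the two implications separately. The forward direction is largely routine; the reverse direction is where the content lies, because we must verify that conditions a)--d) force the semilattice order to agree with the Hilbert implication order.

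For the forward implication, assume $(H,\we,\ra,1)$ is a Hilbert algebra with infimum. Conditions a) and b) are part of the definition. For c), since the semilattice order $\leq$ coincides with the Hilbert order $a\ra b = 1$, modus ponens (a standard consequence of the Hilbert axioms, using that $1\ra x = x$) applied to $a\we(a\ra b)\leq a$ and $a\we(a\ra b)\leq a\ra b$ yields $a\we(a\ra b)\leq b$, hence $a\we(a\ra b)\leq a\we b$; the reverse inequality follows from the Hilbert axiom $b\leq a\ra b$. For d), the axiom $x\ra(y\ra z) = (x\ra y)\ra(x\ra z)$ together with $x\ra 1 = 1$ implies monotonicity of $\ra$ in the second argument, so $b\we d\leq b$ and $b\we d\leq d$ give $a\ra(b\we d)\leq a\ra b$ and $a\ra(b\we d)\leq a\ra d$, hence the desired inequality.

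For the reverse implication, assume a)--d); we show that the semilattice order satisfies $a\leq b$ iff $a\ra b = 1$, which is exactly what the remaining clause of the definition of Hilbert algebra with infimum demands. If $a\ra b = 1$, then by c) and the fact that $1$ is the top of the semilattice, $a\we b = a\we(a\ra b) = a\we 1 = a$, so $a\leq b$. Conversely, assume $a\we b = a$. Instantiating d) with $d := a$ gives $a\ra(b\we a)\leq (a\ra b)\we (a\ra a) = (a\ra b)\we 1 = a\ra b$. By commutativity of the semilattice, $b\we a = a\we b = a$, so the left-hand side is $a\ra a = 1$; thus $1\leq a\ra b$ in the semilattice, and since $1$ is the top we conclude $a\ra b = 1$.

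The main obstacle is this last step of the reverse direction: one has to find the right instance of d). Taking $d := a$ is the key move, because it simultaneously collapses the right-hand side (via $a\ra a = 1$) to $a\ra b$ and, via the hypothesis $a\we b = a$, makes the left-hand side equal $a\ra a = 1$, yielding $a\ra b = 1$ at once. Everything else reduces to standard manipulations in Hilbert algebras and upper bounded semilattices.
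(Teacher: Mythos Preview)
Your proof is correct. Note, however, that the paper does not actually supply a proof of this proposition: it simply records the result as \cite[Theorem~2.1]{Fig} and uses it as a black box. So there is no argument in the paper to compare yours against.

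That said, your write-up is a clean, self-contained verification. The forward direction is indeed routine once one uses that in a Hilbert algebra with infimum the semilattice order and the Hilbert order coincide (so that the semilattice inequalities $a\we(a\ra b)\le a$, $a\we(a\ra b)\le a\ra b$, $b\we d\le b$, $b\we d\le d$ may be fed into the Hilbert-algebra facts you invoke). The substantive part is the reverse direction, and your instantiation $d:=a$ in condition~d) is exactly the right trick: it forces $a\ra(b\we a)=a\ra a=1$ on the left (using $a\we b=a$) while collapsing the right-hand side to $a\ra b$, giving $1\le a\ra b$ in the semilattice and hence $a\ra b=1$. Nothing is missing.
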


In every Hilbert algebra with infimum we have  $a\ra a = 1$ and
$a\we (a\ra b) \leq b$, so the variety of Hilbert algebras with
infimum is a subvariety of the variety of hemi-implicative
semilattices.

We will write $\Hil$ for the category whose objects are bounded
Hilbert algebras with infimum and whose morphisms are the
corresponding algebra homomorphisms. Clearly $\Hil$ is a full
subcategory of $\hIS$.

\begin{defn}
An \emph{implicative semilattice} is an algebra $(H, \we, \ra)$ of
type $(2,2)$ such that $(H,\we)$ is a semilattice, and for every
$a,b,d\in H$ we have that $a\we b \leq d$ if and only if $a\leq
b\ra d$.
\end{defn}

Implicative semilattices have a greatest element, denoted by $1$.
In this paper we shall include the constant $1$ in the language of
the algebras. Implicative semilattices are the algebraic models of
the implication-conjunction fragment of Intuitionistic
Propositional Logic. For more details about these algebras see
\cite{Cu}.

An algebra $(H,\we,\ra,0,1)$ of type $(2,2,0,0)$ is a
\emph{bounded implicative semilattice} if $(H,\we,\ra,1)$ is an
implicative semilattice and $0$ is the first element with respect
to the order. We write $\IS$ for the category whose objects are
bounded implicative semilattices and whose morphisms are the
corresponding algebra homomorphisms. We have that $\IS$ is a full
subcategory of $\hIS$.

It is part of the folklore of the subject that the class of
implicative semilattices is a variety. There are many ways to
axiomatize the variety of implicative semilattices. In the
following lemma we propose a possible axiomatization that will
play an important role in the next section.

\begin{lem} \label{lis}
Let $(H,\we,1)$ be an upper bounded semilattice and $\ra$ a binary
operation on $H$. The following conditions are equivalent:
\begin{enumerate}[\normalfont (a)]
\item For every $a,b,d \in H$, $a\leq b\ra d$ if and only if $a\we
b \leq d$. \item For every $a,b,d \in H$ the following conditions
hold:
\begin{enumerate}[\normalfont 1)]
\item $a\we (a\ra b) \leq b$, \item $a\ra a = 1$, \item $a\ra
(b\we d) = (a\ra b) \we (a\ra d)$, \item $a\leq b \ra (a\we b)$.
\end{enumerate}
\end{enumerate}
\end{lem}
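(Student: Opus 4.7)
The plan is to prove the two implications separately, each reducing to short manipulations with the axioms listed.

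For (a) $\Rightarrow$ (b), I would instantiate the residuation biconditional four times. To get 1), apply the $\Rightarrow$ direction of (a) with $a$ replaced by $a\ra b$ and $b, d$ by $a, b$: the trivially true $a\ra b \leq a\ra b$ yields $(a\ra b)\we a \leq b$. For 2), observe that $1\we a \leq a$ always holds, so the $\Leftarrow$ direction of (a) gives $1 \leq a\ra a$, hence $a\ra a=1$. For 4), the $\Leftarrow$ direction applied to $a\we b \leq a\we b$ gives $a \leq b\ra (a\we b)$ immediately. For 3), I would prove both inequalities: using 1) (already established) together with $b\we d \leq b$ and $b\we d \leq d$, show $(a\ra(b\we d))\we a \leq b$ and $\leq d$, hence $\leq b\we d$, so by $\Ra$ we get $a\ra(b\we d) \leq (a\ra b)\we (a\ra d)$; for the reverse, let $y := (a\ra b)\we(a\ra d)$ and use 1) twice to see $y\we a \leq b$ and $y \we a \leq d$, hence $y\we a \leq b \we d$, so $y \leq a\ra (b\we d)$.

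For (b) $\Rightarrow$ (a), the $\Rightarrow$ direction is the easy one: from $a \leq b\ra d$, monotonicity of $\we$ yields $a\we b \leq (b\ra d)\we b$, which is $\leq d$ by axiom 1) (applied with $a$ replaced by $b$ and $b$ replaced by $d$). The $\Leftarrow$ direction needs a small preliminary step: I need monotonicity of $\ra$ in its second argument. This follows from 3), for if $b\leq d$ then $b\we d = b$, so $a\ra b = a\ra (b\we d) = (a\ra b)\we (a\ra d)$, giving $a\ra b \leq a\ra d$. Once this is in hand, assume $a\we b \leq d$; axiom 4) gives $a \leq b\ra (a\we b)$, and monotonicity in the second coordinate gives $b \ra (a\we b) \leq b\ra d$, whence $a \leq b\ra d$.

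The main obstacle, such as there is one, is remembering to derive monotonicity of $\ra$ in the second argument from axiom 3) before attempting the converse of the residuation, since it is not stated as a separate axiom in the list (b); the rest of the argument is bookkeeping with the given inequalities.
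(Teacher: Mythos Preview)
Your proof is correct and, for the direction $(b)\Rightarrow(a)$, essentially identical to the paper's argument: the paper also derives monotonicity in the second argument from condition~3) (stating simply that ``it follows from 3) that $b\ra (a\we b) \leq b\ra d$'') and then combines this with~4), just as you do. For the direction $(a)\Rightarrow(b)$ the paper does not give a proof at all but refers the reader to Nemitz~\cite{N}; your standard verifications of 1)--4) from residuation are fine and fill that gap.
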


\begin{proof}
Assume the conditions 1), 2), 3) and 4) of (b). It follows from 1)
that if $a\leq b\ra d$, then $a\we b \leq d$. Suppose now that
$a\we b \leq d$. It follows from 3) that $b\ra (a\we b) \leq b\ra
d$. But by 4) we have that $a\leq b \ra (a\we b)$, so $a\leq b\ra
d$. Then,  $a\leq b\ra d$ if and only if $a\we b \leq d$. For the
converse of this property see \cite{N}.
\end{proof}

\begin{rem} \label{isaha}
A  moment of reflection shows that implicative semilattices are
Hilbert algebras with infimum where the implication is the right
residuum of the infimum, or equivalently, where the following
equation holds \cite{Fig}: $a\leq b \ra (a\we b)$. Alternatively,
it follows from Lemma \ref{lis} that an implicative semilattice is
a hemi-implicative semilattice which satisfies $a\ra (b\we d) =
(a\ra b) \we (a\ra d)$ and $a\leq b \ra (a\we b)$ for every
$a,b,d$.
\end{rem}

Semi-Heyting algebras were introduced by H.P. Sankappanavar in
\cite{Sanka} as an abstraction of Heyting algebras. These algebras
share with Heyting algebras the following properties: they are
pseudocomplemented and distributive lattices and their  congruences
are determined by the lattice filters.

\begin{defn}
An algebra $(H, \we, \vee, \ra, 0, 1)$ of type $(2,2,2,0,0)$ is a
\emph{semi-Heyting algebra} if the following conditions hold for
every $a,b,d$ in $H$:
\begin{enumerate}
\item[$\SHon$] $(H, \we, \vee, 0, 1)\in \DL$, \item[$\SHtw$] $a\we
(a\ra b) = a \we b$, \item[$\SHth$] $a\we (b\ra d) = a \we ((a\we
b) \ra (a\we d))$, \item[$\SHfo$] $a\ra a = 1$.
\end{enumerate}
\end{defn}

We write $\SH$ for the category of semi-Heyting algebras. A
semi-Heyting algebra can be seen as a hemi-implicative lattice
which satisfies $\SHtw$ and $\SHth$. Therefore, $\SH$ is a full
subcategory of $\hBDL$.

\begin{rem}
Implicative semilattices satisfy the inequality $a\leq b \ra (a\we
b)$ because $a\we b \leq a\we b$, or simply by Lemma \ref{lis}.
Semi-Heyting algebras also satisfy the inequality $a\leq b \ra
(a\we b)$. This fact follows from $\SHth$ and $\SHfo$ in the
following way:
\[
\begin{array}
[c]{lllll}
a \we (b \ra (a\we b)) & = &  a\we ((a\we b) \ra (a\we b)) &  & \\
& = & a\we 1&  & \\
& = & a,& &
\end{array}
\]
which means that $a\leq b \ra (a\we b)$.
\end{rem}

In the following example we will show the following facts: $\Hil$
is a proper subvariety of $\hIS$ and $\SH$ is a proper subvariety
of $\hBDL$.

\begin{ex}
Let $H$ be the chain of three elements with $0<a<1$. We define on
$H$ the following binary operation:
\[%
\begin{tabular}
[c]{c|ccc}%
$\ra$ & $0$ & $a$ & $1$\\\hline
$0$ & $1$ & $a$ & $1$\\
$a$ & $0$ & $1$ & $1$\\
$1$ & $0$ & $0$ & $1$%
\end{tabular}
\]
Straightforward computations show that $(H, \we, \vee,\ra, 0,1)
\in \hBDL$. In particular, $(H,\we,\ra,0,1) \in \hIS$. However,
$(H,\we,\ra,0,1) \notin \Hil$ and $(H,\we,\vee,\ra,0,1) \notin
\SH$ because $1\we (1\ra a) \neq 1\we a$.
\end{ex}

It is a known fact that the variety $\IS$ is properly included in
$\Hil$. We give an example that shows it. Let $H$ be the universe
of the boolean lattice of four elements, where $a$ and $b$ are the
atoms. Then $(H,\we,\ra,0,1) \in \Hil$, where $\ra$ is the
operation defined in Example \ref{ej1}. Since $a\ra 0 = 0$ and
$0\neq b$, then $(H,\we,\ra,0,1) \notin \IS$.

It is also a known fact that $\He$ is a proper subvariety of
$\SH$. We also provide an example that shows it. Consider the
chain of two elements. We define the following binary operation:
\[%
\begin{tabular}
[c]{c|cc}%
$\ra$ & $0$ & $1$\\\hline
$0$   & $1$ & $0$\\
$1$   & $0$ & $1$%
\end{tabular}
\]
Then $(H,\we,\vee,\ra,0,1)\in \SH$. Since $0\ra 1 = 0$ and $0 \neq
1$, then $(H,\we,\vee,\ra,0,1)$ is not a Heyting algebra.

The following diagrams show the relations among the categories
defined in this section:
\begin{equation*}
\\\\\\\\\\\\\\\\\\\\\\\\\\\\\\\\\\\\\\\\\\\\\\\\\\\\\\
\xymatrix@1{
& \hIS &\\
& \ar@{-}[u] \Hil &\\
& \ar@{-}[u] \IS &\\
 }
\\\\\\\\\\\\\\\\\\\\\\\\\\\\\\\\\\\\\\\\\\\\\\\\\\\\\\
\xymatrix@1{
& \hBDL &\\
& \ar@{-}[u] \SH &\\
& \ar@{-}[u] \He &\\
 }
\end{equation*}

In \cite{CCSM}  an extension of Kalman's functor was studied for
the variety of algebras with implication $(H,\we,\vee, \ra, 0,1)$
which satisfy $a\we (a\ra b)\leq b$ for every $a$, $b$.

\section{Kalman's construction for $\hIS$ and $\hBDL$} \label{s5}

The fact that Kalman's construction can be extended consistently
to Heyting algebras led us to believe that some of the picture
could be lifted to the varieties $\hIS$ and $\hBDL$. More
precisely, it arises the natural question of wether  is it
possible to find some category $\KhIS$ in order to obtain an
equivalence between $\hIS$ and some full subcategory of $\KhIS$,
making the following diagram commute:
\[
 \xymatrix{
   \hIS \ar[rr]^\K    \ar@{^{(}->}[d]  & & \KhIS  \ar@{^{(}->}[d] \\
   \MS              \ar[rr]^\K    &   & \KMS  \\
   }
\]
Similarly, it arises the question of wether is it possible to find
some category $\KhBDL$ in order to obtain an equivalence between
$\hBDL$ and certain full subcategory of $\KhBDL$, making the
following diagram commute:
\[
 \xymatrix{
   \hBDL \ar[rr]^\K    \ar@{^{(}->}[d]  & & \KhBDL  \ar@{^{(}->}[d] \\
   \DL              \ar[rr]^\K    &   & \cKl  \\
   }
\]
In this section,  we answer these questions in the positive.
Moreover, we extend Kalman's functor to the categories $\Hil$,
$\IS$ and $\SH$.

The aim of Section \ref{s3} was to obtain a categorical
equivalence between $\MS$ and $\KMSp$ (Corollary \ref{cms})
 to be applied in the present section to the category $\hIS$.
For the case of the category $\hBDL$ we will also use Theorem
\ref{Kce}, which establishes an equivalence between $\DL$ and
$\cKlp$.

\subsection{Kalman's construction for $\hIS$}

Let $H\in \hIS$. We write $\ra$ for the implication of $H$ and define
a binary operation on $\K(H)$ (also denoted   $\ra$) by
\begin{equation} \label{eqi1}
(a,b) \ra (d,e): = ((a\ra d) \we (e\ra b), a\we e).
\end{equation}
This definition is motivated by Remark \ref{imp}. Note that since
$a\we b = d\we e = 0$, then $(a\ra d)\we (e\ra b) \we a\we e = 0$
because $a\we (a\ra d) \leq d$ and $d\we e = 0$. Hence, $(a,b) \ra
(d,e) \in \K(H)$. The next definition is motivated by the original
Kalman's construction.

\begin{defn}
We denote by $\KhIS$ the category whose objects are the structures
$(T, \leq, {\sim}, \ra, \ce, 0,1)$ such that $(T, \leq,
{\sim},\ce, 0,1)\in \KMS$ and $\ra$ is a binary operation on $T$
which satisfies the following conditions for every $x,y \in T$:
\begin{enumerate}
\item[$\Ko$] $\ce \leq x \ra (y\vee \ce)$, \item[$\Ktw$] $x \we
((x\vee \ce)\ra (y\vee \ce))\leq y \vee \ce$, \item[$\Kth$] $x \ra
x = 1$,
\item[$\Kfo$] $(x\ra y)\we \ce = ({\sim} x \we \ce) \vee
(y\we \ce)$,
\item[$\Kfi$] $(x\ra {\sim} y) \vee \ce = ((x\vee \ce)
\ra({\sim} y \vee \ce)) \we ((y\vee \ce) \ra ({\sim} x \vee \ce))$.
\end{enumerate}
The morphisms of $\KhIS$ are the morphisms $g$ of $\KMS$ which
satisfy the condition $g(x\ra y) = g(x)\ra g(y)$ for every $x$,
$y$.
%whwnwver $x,y\geq \ce$.
\end{defn}

In what follows we will prove that if $H \in \hIS$, then $\K(H) \in
\KhIS$, where the binary operation $\ra$ in $\K(H)$ is that defined
in (\ref{eqi1}).

\begin{prop} \label{Kl}
Let $H \in \hIS$. Then $\K(H) \in \KhIS$. Furthermore, $\K$
extends to a functor from $\hIS$ to $\KhIS$, which we also denote
by $\K$.
\end{prop}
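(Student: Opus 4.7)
The plan is to build on the $\KMS$-level result already established in Corollary \ref{cms}: the reduct $(\K(H),\preceq,{\sim},\ce,0,1)$ is automatically an object of $\KMS$, so only the verification of the five axioms concerning the new operation $\ra$, together with the functoriality on arrows, remains. The argument is almost entirely computational: every clause reduces, via Lemma \ref{l1}, to elementary manipulations with pairs of the form $(a,0)$ and the center $(0,0)$, plus the axioms $\Won$--$\Wth$ of $\hIS$.

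First I would check that the operation defined in (\ref{eqi1}) indeed lands in $\K(H)$. Given $(a,b),(d,e)\in \K(H)$, one must verify that $(a\ra d)\we(e\ra b)\we a\we e = 0$. This follows from $\Wtw$: $a\we(a\ra d)\leq d$ and $d\we e = 0$ give $a\we e\we(a\ra d)=0$, so the four-fold meet is $0$. Next, the key technical simplifications I will use repeatedly are: for $(a,0)\we(b,d) = (a\we b,d)$, for $(b,d)\we(0,0) = (0,d)$ and for $(b,d)\vee(0,0) = (b,0)$, all from Lemma \ref{l1}.

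With these in hand the axioms fall out one by one. For $\Ko$, $y\vee\ce=(d,0)$ makes the second coordinate of $(a,b)\ra(d,0)$ equal to $a\we 0 = 0$, so the value is $\geq \ce$. For $\Ktw$, $(x\vee\ce)\ra(y\vee\ce) = (a\ra d,0)$, so $x\we((x\vee\ce)\ra(y\vee\ce)) = (a\we(a\ra d),b)\preceq (d,0) = y\vee\ce$ by $\Wtw$. Axiom $\Kth$ is immediate from $\Wth$. For $\Kfo$, both sides compute to $(0,a\we e)$. For $\Kfi$, the left-hand side becomes $((a\ra e)\we(d\ra b),0)$ and the right-hand side is the meet of $(a\ra e,0)$ and $(d\ra b,0)$, which by Lemma \ref{l1} equals the same pair. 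Each verification is a short symbolic computation; the only ingredient beyond Lemma \ref{l1} and the definition of $\ra$ in $\K(H)$ is $\Wtw$ of $\hIS$, used precisely where a first-coordinate meet collapses to $0$.

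For functoriality, if $f:H\ra G$ is a morphism in $\hIS$, Corollary \ref{cms} (or rather the preceding lemma about $\K(f)\in\KPo$, extended trivially to $\KMS$) already gives that $\K(f)$ is a morphism of $\KMS$; it remains to check preservation of $\ra$, and this is a direct unpacking of (\ref{eqi1}) together with $f$ being an algebra homomorphism of hemi-implicative semilattices. The one place where I expect mild care to be needed is $\Kfi$: the asymmetry between $x\ra{\sim}y$ on the left and the symmetric meet on the right requires noting that taking $\vee\ce$ kills second coordinates uniformly, after which the equality reduces to the definition of $\ra$ on pairs whose second coordinate is $0$. There is no real conceptual obstacle; the proof is a careful bookkeeping exercise motivated by the Nelson-lattice formula in Remark \ref{imp}.
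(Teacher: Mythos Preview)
Your proposal is correct and follows essentially the same route as the paper: both proofs invoke the $\KMS$-level result and then verify $\Ko$--$\Kfi$ by direct coordinatewise computation using Lemma~\ref{l1}, with $\Wtw$ supplying the one nontrivial inequality in $\Ktw$, and handle functoriality by unpacking (\ref{eqi1}) against the fact that $f$ preserves $\we$ and $\ra$. The only cosmetic difference is that you explicitly re-check that $\ra$ lands in $\K(H)$, which the paper dispatches in the paragraph preceding the definition of $\KhIS$ rather than inside the proof.
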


\begin{proof}
Throughout this proof we use lemmas \ref{l1} and \ref{la}. Recall
that $c = (0,0)$.

Let $(a,b), (d,e) \in \K(H)$. In particular, $(d,e) \vee \ce =
(d,0)$. Then
\[
\begin{array}
[c]{lllll} (a,b)\ra ((d,e) \vee \ce)& = & (a,b)\ra (d,0)&  & \\
 & = & ((a\ra d)\we (0\ra b),0)&  & \\
 & \succeq & \ce.&  &
\end{array}
\]
Thus we have proved the condition $\Ko$. In order to prove $\Ktw$
we make the following computation:
\[
\begin{array}
[c]{lllll}
(a,b) \we (((a,b) \vee \ce) \ra ((d,e) \vee \ce))& = &(a,b) \we ((a,0)\ra (d,0))&  & \\
 & = & (a,b) \we (a\ra d, 0)&  & \\
 & = & (a\we (a\ra d),b) &  &\\
 & \preceq & (d,b) &  & \\
 & \preceq & (d,0) &  & \\
 & = & (d,e) \vee \ce. &  & \\
\end{array}
\]
The proof of the condition $\Kth$ is immediate. In order to prove
$\Kfo$, note that $((a,b)\ra (d,e))\we \ce = (0,a\we e)$ and
\[
\begin{array}
[c]{lllll}
({\sim} (a,b)\we \ce) \vee ((d,e) \we \ce)& = &(0,a) \vee (0,e)&  & \\
 & = & (0,a\we e).&  &
\end{array}
\]
Hence, we have that
\[
((a,b)\ra (d,e))\we \ce = ({\sim} (a,b)\we \ce) \vee ((d,e) \we
\ce).
\]
Finally we shall prove $\Kfi$. First note that
\[
\begin{array}
[c]{lllll}
((a,b) \ra {\sim}(d,e)) \vee \ce& = &((a,b) \ra (e,d))\vee \ce&  & \\
 & = & ((a\ra e)\we (d\ra b),0).&  &
\end{array}
\]
Then,
\begin{equation} \label{K5on}
((a,b) \ra {\sim}(d,e)) \vee \ce = ((a\ra e)\we (d\ra b),0).
\end{equation}
On the other hand,
\[
\begin{array}
[c]{lllll}
((a,0)\ra (e,0))\we ((d,0)\ra (b,0)) & = &(a\ra e,0) \we (d\ra b,0)&  & \\
& = & ((a\ra e)\we (d\ra b),0).&  &
\end{array}
\]
Hence,
\begin{equation} \label{K5tw}
((a,0)\ra (e,0))\we ((d,0)\ra (b,0)) = ((a\ra e)\we (d\ra b),0).
\end{equation}
Since $(a,b) \vee \ce = (a,0)$, ${\sim}(d,e) \vee \ce = (e,0)$,
$(d,e) \vee \ce = (d,0)$,  and ${\sim}(a,b) \vee \ce = (b,0)$,  then
it follows from (\ref{K5on}) and (\ref{K5tw}) that the condition
$\Kfi$ holds. Thus, $\K(H)\in \KhIS$.

Let $f:H\ra G$ be a morphism in $\hIS$. Straightforward
computations show that $\K(f)$ preserves the implication
operation, which implies that $\K(f)$ is a morphism in $\KhIS$.
\end{proof}

\begin{prop} \label{p2}
If $(T, \leq, {\sim}, \ra, \ce, 0,1)\in \KhIS$, then
$(\C(T),\we,\ra,\ce,1) \in \hIS$. Furthermore, $\C$ extends to a
functor from $\KhIS$ to $\hIS$, which we also denote by $\C$.
\end{prop}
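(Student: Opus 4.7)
The plan is to verify, directly from the $\KhIS$-axioms, that $(\C(T),\we,\ra,\ce,1)$ satisfies the three conditions $\Won$, $\Wtw$ and $\Wth$ of a bounded hemi-implicative semilattice, and then to check that the restriction of a $\KhIS$-morphism is a $\hIS$-morphism. The axioms $\Ko$--$\Kth$ were tailored precisely for this to work, so the proof is essentially a translation of axioms and I do not foresee any real obstacle; the only subtlety is being careful about when $x\vee \ce$ collapses to $x$.

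For closure of $\C(T)$ under $\we$, observe that $\KMth$ guarantees the existence of $x\we y$ in $T$ whenever $x\geq \ce$, and if moreover $y\geq \ce$ then $\ce$ is a lower bound of $\{x,y\}$, so $\ce\leq x\we y$; thus the meet of $T$ restricts to a meet on $\C(T)$. Since $1\geq \ce$, the structure $(\C(T),\we,1)$ is an upper bounded semilattice with $\ce$ as least element. For closure under $\ra$ I would invoke $\Ko$: when $y\geq \ce$ one has $y\vee \ce = y$, so $\Ko$ gives $\ce\leq x\ra(y\vee \ce) = x\ra y$. This settles $\Won$ together with the boundedness requirement.

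The axiom $\Wth$ is $\Kth$ verbatim. For $\Wtw$, suppose $x,y,z\in \C(T)$ with $x\leq y\ra z$. Since $y,z\geq \ce$ we have $y\vee \ce = y$ and $z\vee \ce = z$, so axiom $\Ktw$, with its variables specialised to $y$ and $z$, reads $y\we(y\ra z)\leq z$. Monotonicity of $\we$ in each argument, valid in any poset wherever the relevant meets exist, then yields $x\we y \leq (y\ra z)\we y \leq z$, as required.

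For the functorial part, given a morphism $g:T\to U$ in $\KhIS$, define $\C(g)$ as the restriction of $g$ to $\C(T)$. Its image lies in $\C(U)$ because $g$ preserves the order and satisfies $g(\ce)=\ce$ (forced by preservation of the involution together with uniqueness of the center, Lemma \ref{la}). Preservation of $\we$ on elements greater than or equal to the center is part of being a $\KMS$-morphism, preservation of $\ra$ is the extra $\KhIS$-condition, preservation of the bottom $\ce$ was just noted, and $g(1)=g(x\ra x)=g(x)\ra g(x)=1$ by $\Kth$. Functoriality (identities and composition) is then immediate.
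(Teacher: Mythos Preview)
Your proof is correct and follows essentially the same route as the paper's: both use $\Ko$ to show $\C(T)$ is closed under $\ra$, $\Ktw$ (specialised to $x,y\geq \ce$ so that $x\vee\ce=x$, $y\vee\ce=y$) to obtain $\Wtw$, and $\Kth$ for $\Wth$. The paper is terser---it verifies $\Wtw$ in the equivalent equational form $x\we(x\ra y)\leq y$ (citing the earlier Remark) and dismisses the functorial part as ``immediate''---whereas you spell out the implication form of $\Wtw$ via monotonicity of $\we$ and give explicit arguments for $g(\ce)=\ce$ and $g(1)=1$; but these are only differences of detail, not of strategy.
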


\begin{proof}
We have that $\C(T)$ is closed under the operation $\ra$. In order
to prove it, let $x,y \geq \ce$. By $\Ko$ we have that
\[
\begin{array}
[c]{lllll}
\ce & \leq &(x\vee \ce) \ra (y\vee \ce)&  & \\
& = & x \ra y,&  &
\end{array}
\]
so $x\ra y \in \C(T)$. Thus, the restriction of $\ra$ to $\C(T)$ is indeed an operation on $\C(T)$. Let
$x,y \geq \ce$. It follows from $\Ktw$ that $x\we (x\ra y)\leq y$
and it follows from $\Kth$ that $x\ra x = 1$. Then
$(\C(T),\we,\ra,\ce,1) \in \hIS$. The rest of the proof is
immediate.
\end{proof}

\begin{rem} \label{a}
For $H\in \hIS$ we have that $\alpha_{H}(a\ra b) = \alpha_{H}(a)
\ra \alpha_{H}(b)$ for every $a$, $b\in H$. Moreover, $\alpha_H$
is an isomorphism in $\hIS$.
\end{rem}

For the case of $T\in \KhIS$ we will prove that $\beta_T$
preserves the implication.

\begin{lem} \label{b}
Let $T\in \KhIS$. Then $\beta_T$ is injective and a morphism in
$\KhIS$. Moreover, if $T$ satisfies $(\CK)$ then $\beta_T$ is an
isomorphism in $\KhIS$.
\end{lem}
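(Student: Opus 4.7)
The plan is to build on the already-established fact (from Section \ref{s3} and Corollary \ref{cms}) that $\beta_T$ is an injective morphism in $\KMS$ and, under $(\CK)$, an order isomorphism. Thus the only new thing to check here is that $\beta_T$ preserves the implication, that is,
\[
\beta_T(x\ra y)=\beta_T(x)\ra \beta_T(y)\qquad\text{for every }x,y\in T,
\]
where the right-hand side is computed in $\K(\C(T))$ using formula (\ref{eqi1}). Unpacking the definitions, the right-hand side equals
\[
\bigl(((x\vee\ce)\ra(y\vee\ce))\we((\sim y\vee\ce)\ra(\sim x\vee\ce)),\ (x\vee\ce)\we(\sim y\vee\ce)\bigr),
\]
while the left-hand side equals $((x\ra y)\vee\ce,\ {\sim}(x\ra y)\vee\ce)$. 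So the task reduces to two coordinate-wise equalities.

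For the first coordinate I would apply condition $\Kfi$ to the pair $(x,{\sim} y)$: using ${\sim}{\sim}y=y$, the condition reads exactly $(x\ra y)\vee\ce=((x\vee\ce)\ra(y\vee\ce))\we((\sim y\vee\ce)\ra(\sim x\vee\ce))$, which is what we need. For the second coordinate I would start from $\Kfo$, namely $(x\ra y)\we\ce=({\sim}x\we\ce)\vee(y\we\ce)$, apply ${\sim}$ to both sides, and use the De Morgan-type identity from Lemma \ref{la} (which gives ${\sim}(a\we\ce)={\sim}a\vee\ce$ and turns joins into meets). This yields ${\sim}(x\ra y)\vee\ce=(x\vee\ce)\we({\sim}y\vee\ce)$, as required. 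Both coordinate computations are short and mechanical once the right two axioms are identified; the small subtlety is recognizing that $\Kfi$ is really the first-coordinate statement in disguise and must be invoked with $\sim y$ in place of $y$.

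Once the preservation of $\ra$ is in hand, injectivity and the morphism property in $\KMS$ have already been obtained in Section \ref{s3}, so $\beta_T$ is an injective morphism in $\KhIS$. For the final clause, if $T$ satisfies $(\CK)$, then by the corresponding lemma in Section \ref{s3}, $\beta_T$ is an order isomorphism and already an isomorphism in $\KPo$ (and hence in $\KMS$). Because the inverse of an order isomorphism that preserves all existing operations must itself preserve them (the operations are determined by the order together with $\sim$ and $\ra$, and both $\beta_T$ and $\beta_T^{-1}$ respect the order and ${\sim}$), the preservation of $\ra$ transfers to $\beta_T^{-1}$: for $u,v\in\K(\C(T))$, writing $u=\beta_T(x)$, $v=\beta_T(y)$, we have $\beta_T^{-1}(u\ra v)=\beta_T^{-1}(\beta_T(x\ra y))=x\ra y=\beta_T^{-1}(u)\ra\beta_T^{-1}(v)$. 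Hence $\beta_T$ is an isomorphism in $\KhIS$.

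The main obstacle is the second coordinate computation, where one must combine $\Kfo$ with the order-theoretic De Morgan law of Lemma \ref{la}; the first coordinate is essentially a restatement of $\Kfi$. No genuinely new structural argument is needed beyond these two verifications.
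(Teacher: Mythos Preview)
Your proposal is correct and follows essentially the same approach as the paper: reduce to showing $\beta_T$ preserves $\ra$, then verify the two coordinates using $\Kfi$ (with the substitution $y\mapsto{\sim}y$) and $\Kfo$ (after applying ${\sim}$ and the De Morgan identity of Lemma~\ref{la}). The paper organizes this by proving the equivalent statement $\beta_T(x\ra{\sim}y)=\beta_T(x)\ra\beta_T({\sim}y)$ so that $\Kfi$ applies verbatim, but this is merely a cosmetic difference from your version.
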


\begin{proof}
We need to prove that $\beta_T(x\ra y) = \beta_T(x) \ra
\beta_T(y)$ for every $x,y$. In an equivalent way, we need to
prove that $\beta_T(x\ra {\sim} y) = \beta_T(x) \ra \beta_T({\sim}
y)$ for every $x,y$. It follows from $\Kfo$ and $\Kfi$ that
\[
\begin{array}
[c]{lllll}
\beta_T(x\ra {\sim} y) & = & ((x\ra {\sim} y) \vee \ce,{\sim} (x\ra {\sim} y) \vee \ce)) &  & \\
& = & (((x\vee \ce) \ra({\sim} y \vee \ce)) \we ((y\vee \ce) \ra
({\sim} x \vee \ce)), (x\vee \ce) \we (y \vee \ce)) &  & \\
 & = & \beta_T(x) \ra \beta_T({\sim} y).&  &
\end{array}
\]
\end{proof}

We write $\KhISp$ for the full subcategory of $\KhIS$ whose
objects satisfy $(\CK)$. The proof of the following theorem
follows from Corollary \ref{cms}, Proposition \ref{Kl},
Proposition \ref{p2}, Remark \ref{a} and Lemma \ref{b}.

\begin{thm} \label{pt}
The functors $\K$ and $\C$ establish a categorical equivalence
between $\hIS$ and $\KhISp$ with natural isomorphisms $\alpha$ and
$\beta$.
\end{thm}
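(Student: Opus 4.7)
The plan is to assemble Theorem~\ref{pt} from the equivalence between $\MS$ and $\KMSp$ already established in Corollary~\ref{cms} by adding the extra structure provided by the implication. Concretely, the strategy is: (i) show that the functors $\K:\hIS\to\KhIS$ and $\C:\KhIS\to\hIS$ of propositions~\ref{Kl} and \ref{p2} restrict to an adjoint pair between $\hIS$ and $\KhISp$; (ii) check that the components $\alpha_H$ and $\beta_T$, which already live over the base equivalence, respect the implication and are morphisms (in fact isomorphisms in $\KhISp$) in the richer category; (iii) deduce naturality from the naturality of $\alpha$ and $\beta$ in Corollary~\ref{cms}.

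First I would verify that $\K$ really lands in $\KhISp$. By Proposition~\ref{Kl} we already know $\K(H)\in\KhIS$; moreover, the Kleene-poset/bounded-semilattice part of $\K(H)$ is exactly the object produced in Corollary~\ref{cms}, which satisfies $(\CK)$. Since the remark after Definition~\ref{DCK} shows the $(\CK)$ condition depends only on the $\KMS$-reduct, it follows automatically that $\K(H)\in\KhISp$. For $\C$ no restriction is needed: Proposition~\ref{p2} already produces objects of $\hIS$ from any object of $\KhIS$, in particular from the subcategory $\KhISp$.

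Next I would handle the unit and counit. For $H\in\hIS$, Remark~\ref{a} tells us that $\alpha_H:H\to\C(\K(H))$ preserves the implication, and combined with the fact from Corollary~\ref{cms} that $\alpha_H$ is an isomorphism of bounded semilattices it follows that $\alpha_H$ is an isomorphism in $\hIS$. For $T\in\KhISp$, Lemma~\ref{b} gives that $\beta_T$ is a morphism in $\KhIS$ which is moreover an isomorphism in $\KhIS$ when $T$ satisfies $(\CK)$; so $\beta_T$ is an isomorphism in $\KhISp$. The naturality squares for $\alpha$ and $\beta$ commute because the underlying functions coincide with those of Corollary~\ref{cms}, where naturality is already known; no new verification is required.

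There is really no serious obstacle here: all of the hard work was done in Corollary~\ref{cms} (for the underlying equivalence of posets/semilattices) and in propositions~\ref{Kl}, \ref{p2} and Lemma~\ref{b} (for the handling of the implication). The only mildly delicate point is to double-check that the $(\CK)$ condition is unchanged when passing from $\KMS$ to $\KhIS$, and that the implication operation on $\K(\C(T))$ induced from $\C(T)\in\hIS$ agrees, through $\beta_T$, with the implication of $T$; but this is precisely the content of Lemma~\ref{b}, which has already been established. Thus the proof reduces to a short paragraph assembling these ingredients.
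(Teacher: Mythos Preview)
Your proposal is correct and follows essentially the same route as the paper: the paper's proof of Theorem~\ref{pt} is a single sentence invoking exactly Corollary~\ref{cms}, Proposition~\ref{Kl}, Proposition~\ref{p2}, Remark~\ref{a}, and Lemma~\ref{b}, which are precisely the ingredients you assemble. Your write-up is a bit more detailed in explaining how the pieces fit (in particular the observation that $(\CK)$ depends only on the $\KMS$-reduct and that naturality is inherited from Corollary~\ref{cms}), but the approach is the same.
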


Let $T \in \KhIS$. We define the following condition for every
$x,y \in T$:
\begin{enumerate}
\item[$\Ksi$] $x \leq (y\vee \ce) \ra ((x\vee \ce)\we (y\vee
\ce))$.
\end{enumerate}

The next lemma is the motivation to consider the condition $\Ksi$.

\begin{lem} \label{CK-K6}
If $H\in \hIS$ satisfies the inequality $a\leq b \ra (a\we b)$ for
every $a, b$,  then $\K(H)$ satisfies $\Ksi$.
\end{lem}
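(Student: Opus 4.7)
The plan is a direct computation in $\K(H)$ using the explicit formulas for $\vee$, $\we$ and $\ra$, together with the coordinate-wise description of the order $\preceq$ given in Section \ref{s3}.

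First I would take arbitrary elements $x = (a,b)$ and $y = (d,e)$ of $\K(H)$, so that $a\we b = 0$ and $d\we e = 0$. By Lemma \ref{l1}(3) we have $x\vee \ce = (a,0)$ and $y\vee \ce = (d,0)$. Then, using Lemma \ref{l1}(1), we can compute the infimum
\[
(x\vee \ce)\we (y\vee \ce) \;=\; (a,0)\we (d,0) \;=\; (a\we d,\,0),
\]
where $a\we d$ exists in $H$ because $H\in \hIS$ has a semilattice reduct.

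Next I would apply the definition (\ref{eqi1}) of the implication in $\K(H)$ to compute
\[
(y\vee \ce)\ra \bigl((x\vee \ce)\we (y\vee \ce)\bigr)
\;=\; (d,0)\ra (a\we d,\,0)
\;=\; \bigl((d\ra (a\we d))\we (0\ra 0),\ d\we 0\bigr),
\]
which simplifies (using $0\ra 0 = 1$ by $\Wth$ and $d\we 0 = 0$) to $(d\ra (a\we d),\,0)$. Finally I would verify $\Ksi$, which in coordinates requires $a\leq d\ra (a\we d)$ and $0\leq b$; the second is trivial and the first is precisely the assumed inequality $a'\leq b'\ra (a'\we b')$ instantiated at $a' = a$, $b' = d$.

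There is really no obstacle: every step reduces to a one-line calculation once the computations of $x\vee \ce$, $y\vee \ce$, and the defining formulas for $\we$ and $\ra$ in $\K(H)$ are in place. The only thing worth being careful about is invoking Lemma \ref{l1} to justify that the relevant infima exist in $\K(H)$ (rather than just in $H\times H$), since in general $\K(H)$ is only a poset and meets need not exist everywhere.
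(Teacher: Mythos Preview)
Your proof is correct and follows essentially the same computation as the paper: both reduce $\Ksi$ to showing $(a,b)\preceq (d\ra(a\we d),0)$ via the explicit formulas for $\vee\,\ce$, $\we$, and $\ra$ in $\K(H)$. The only difference is that you spell out a few intermediate steps (e.g.\ invoking Lemma~\ref{l1} for the existence of the meets and simplifying $0\ra 0$) that the paper leaves implicit.
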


\begin{proof}
First note that for every $(a,b) \in \K(H)$, $(a,b) \vee \ce =
(a,0)$. In order to prove $\Ksi$ we make the following
computation:
\[
\begin{array}
[c]{lllll}
(d,0) \ra ((a,0)\we (d,0)) & = &  (d,0) \ra (a\we d,0) &  & \\
& = & (d \ra (a\we d),0)&  & \\
& \succeq & (a,b).& &
\end{array}
\]
Hence, we obtain  $\Ksi$.
\end{proof}

The following lemma will play an important role in this paper.

\begin{lem} \label{lCK}
If $T \in \KhIS$ satisfies $\Ksi$, then $T$ satisfies $(\CK)$.
\end{lem}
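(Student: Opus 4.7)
My plan is to provide an explicit witness for $(\CK)$. Fix $x, y \in T$ with $x, y \geq \ce$ and $x \we y = \ce$, and take
\[
z := x \we (y \ra {\sim} y),
\]
which exists by $\KMth$ because $x \geq \ce$. I will show that $z \vee \ce = x$ and $z \we \ce = {\sim} y$; combining the second identity with Lemma \ref{la} and ${\sim}\ce = \ce$ gives ${\sim} z \vee \ce = {\sim}(z \we \ce) = y$, so together with the first identity this produces the required witness for $(\CK)$. The choice of $z$ is motivated by decoding, in the $\KhIS$-language, the off-axis element $(a,b) \in \K(H)$ that corresponds to the pair $x = (a,0)$, $y = (b,0)$ with $a \we b = 0$.

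For the identity $z \vee \ce = x$, I first apply $\KMfo$ (valid since $x \geq \ce$) to move the join inside: $z \vee \ce = x \we ((y \ra {\sim} y) \vee \ce)$. Next I apply $\Kfi$ with both of its arguments instantiated to $y$; using $y \vee \ce = y$ and ${\sim} y \vee \ce = \ce$ (both from $y \geq \ce$), its right-hand side collapses to $(y \ra \ce) \we (y \ra \ce) = y \ra \ce$. So $z \vee \ce = x \we (y \ra \ce)$. Finally $\Ksi$, applied with $w := x$ and $u := y$, together with $x \we y = \ce$, yields $x \leq y \ra \ce$, so $x \we (y \ra \ce) = x$.

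For $z \we \ce = {\sim} y$, associativity of existing meets (each of them guaranteed by $\KMth$) gives $z \we \ce = x \we ((y \ra {\sim} y) \we \ce)$. Then $\Kfo$ applied to the pair $(y, {\sim} y)$ delivers $(y \ra {\sim} y) \we \ce = ({\sim} y \we \ce) \vee ({\sim} y \we \ce) = {\sim} y$, where the last equality uses ${\sim} y \leq \ce$ (hence ${\sim} y \we \ce = {\sim} y$) together with $a \vee a = a$. Therefore $z \we \ce = x \we {\sim} y$, and the chain ${\sim} y \leq \ce \leq x$ forces $x \we {\sim} y = {\sim} y$, as required.

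The main obstacle is identifying the witness $z$; once it is fixed, the derivation is a mechanical unwinding of $\KMfo$, $\Kfo$, $\Kfi$, $\Ksi$ together with the basic facts ${\sim} y \leq \ce \leq x$ and $x \we y = \ce$. The heuristic behind the witness is that in $\K(H)$ for $H$ satisfying $a \leq b \ra (a \we b)$, one checks $(a, 0) \we ((b, 0) \ra (0, b)) = (a \we (b \ra 0), b) = (a, b)$, the simplification $a \we (b \ra 0) = a$ being precisely the content of $\Ksi$ for the pair $(a, b)$ with $a \we b = 0$.
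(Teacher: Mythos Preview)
Your proof is correct and follows essentially the same route as the paper: the witness $z = x \we (y \ra {\sim} y)$ is identical, and the two verifications $z \vee \ce = x$ and $z \we \ce = {\sim} y$ proceed through the same axioms $\KMfo$, $\Kfi$, $\Kfo$, and $\Ksi$ in the same way. The only cosmetic difference is that the paper keeps the intermediate expression in the form $(y\vee\ce)\ra((x\vee\ce)\we(y\vee\ce))$ before invoking $\Ksi$, whereas you first simplify to $y \ra \ce$ and then observe that $\Ksi$ gives $x \leq y \ra \ce$; these are the same computation.
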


\begin{proof}
Let $x,y \geq \ce$ such that $x\we y = \ce$. Taking into account
$\KMth$ we can define $z = (y \ra {\sim} y) \we x$. It follows
from $\Kfo$ that
\[
\begin{array}
[c]{lllll}
z\we \ce & = &  ((y\ra {\sim} y) \we \ce)\we x &  & \\
& = & (({\sim} y \we \ce) \vee ({\sim} y \we \ce)) \we x&  & \\
& = & ({\sim} y \we \ce) \we x&  & \\
& = & {\sim} y \we x&  & \\
& = & {\sim} y.&  &
\end{array}
\]
Hence, ${\sim} z \vee \ce = y$. In order to prove that $z\vee \ce
= x$, we use the conditions $\KMth$, $\KMfo$, $\Kfi$, $\Ksi$ and
the fact that $x\we y = \ce$ as follows:
\[
\begin{array}
[c]{lllll}
z\vee \ce & = &  (x\we (y\ra {\sim} y))\vee \ce &  & \\
& = & x\we ((y\ra {\sim} y) \vee \ce) &  &\\
& = & ((y\vee \ce) \ra ({\sim} y \vee \ce))\we x&  & \\
& = & (x\vee \ce) \we ((y\vee \ce) \ra \ce)&  & \\
& = & (x \vee \ce) \we ((y\vee \ce) \ra (x\we y)) &  & \\
& = & (x \vee \ce) \we ((y\vee \ce) \ra ((x\vee \ce)\we (y\vee \ce))) & &\\
& = & x \vee \ce &  &\\
& = & x. &  &
\end{array}
\]
Therefore, $z\vee \ce = x$.
\end{proof}

\subsection{Kalman's construction for $\Hil$}

We write $\KHil$ for the full subcategory of $\KhIS$ whose objects
satisfy the following conditions for every $x,y,z$:
\begin{enumerate}
\item[$\KHon$] $(x\vee \ce)\ra (y\ra (x\vee \ce)) = 1$,
\item[$\KHtw$] $x \ra ((y\vee \ce) \ra (z\vee \ce)) = (x \ra
(y\vee \ce)) \ra (x\ra (z \vee \ce))$, \item[$\KHth$] If $x\ra y =
y \ra x = 1$, then $x = y$, \item[$\KHfo$] $x\we ((x\vee \ce)\ra
(y\vee \ce)) = x\we (y\vee \ce)$, \item[$\KHfi$] $x\ra ((y\vee
\ce) \we (z\vee \ce)) \leq (x\ra (y\vee \ce))\we (x\ra (z\vee
\ce))$.
\end{enumerate}

\begin{ex} \label{KHil}
In every centered Kleene algebra $(T,\we,\vee,{\sim},\ce,0,1)$ it
is possible to define a binary operation, that we denote by $\ra$,
as follows:
\[
x \ra y =
 \begin{cases}
 1, &\text{if $x \vee \ce \leq y \vee \ce$ and $x\we \ce \leq y\we \ce$;}\\
 {\sim} x \vee (y\we \ce),  &\text{if $x \vee \ce \leq y \vee \ce$ and $x\we \ce \nleq y\we \ce $;}\\
 y \vee ({\sim} x \we \ce), &\text{if $x \vee \ce \nleq y \vee \ce$ and $x\we \ce \leq y\we \ce$;}\\
 ((y\vee \ce)\we {\sim} x) \vee (({\sim} x \vee \ce)\we y),  &\text{if  $x\vee \ce \nleq y\vee \ce$ and $x\we \ce \nleq y\we \ce$.}
 \end{cases}
\]
It is possible to prove that $(T,\leq, {\sim},\ce,0,1)\in \KMS$,
and it is not difficult to see that $(T,{\sim},\ra,\ce,0,1) \in
\KHil$.

By endowing  the centered Kleene algebra given in \cite[Example
2.5]{CCSM} with the binary operation $\ra$ just defined we obtain
an example of an object of $\KHil$ which does not satisfy the
condition $(\CK)$.
\end{ex}

\begin{lem} \label{lHa}
\begin{enumerate} [\normalfont (a)]
\item[]
\item If $H\in \Hil$, then $\K(H) \in \KHil$. \item If $T\in \KHil$,
then $\C(T) \in \Hil$.
\end{enumerate}
\end{lem}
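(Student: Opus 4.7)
The plan exploits two facts already established: Proposition \ref{Kl} gives $\K(H) \in \KhIS$ for free (since every object of $\Hil$ is in $\hIS$), and Proposition \ref{p2} gives $\C(T) \in \hIS$ for free. For both directions, the remaining work is to match the additional axioms $\KHon$--$\KHfi$ against the Hilbert-algebra-with-infimum characterization of Proposition \ref{pHil}.

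For part (a), I would verify each of $\KHon$--$\KHfi$ in $\K(H)$ by direct computation, using Lemma \ref{l1} to rewrite $(a,b)\vee\ce=(a,0)$ and the formula $(a,b)\ra(d,e)=((a\ra d)\we(e\ra b),\,a\we e)$ from (\ref{eqi1}). After these substitutions, and noting that $0\ra b=1$ holds in any bounded Hilbert algebra with infimum (because $0\le b$), the second coordinate of each side collapses to $0$ and the first coordinate reduces to a Hilbert-algebra identity in $H$. Concretely: $\KHon$ reduces to $a\ra(d\ra a)=1$; $\KHtw$ reduces to $a\ra(d\ra e)=(a\ra d)\ra(a\ra e)$; $\KHth$ holds because $(a,b)\ra(d,e)=(1,0)$ forces $a\ra d=1$ and $e\ra b=1$, so the symmetric hypothesis yields $a=d$ and $b=e$; $\KHfo$ reduces to $a\we(a\ra d)=a\we d$, i.e.\ clause c) of Proposition \ref{pHil}; and $\KHfi$ reduces to $a\ra(d\we f)\le(a\ra d)\we(a\ra f)$, i.e.\ clause d) of the same proposition.

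For part (b), the key observation is that every element of $\C(T)$ is a fixed point of the map $x\mapsto x\vee\ce$. Substituting $x\vee\ce=x$, $y\vee\ce=y$, $z\vee\ce=z$ throughout $\KHon$--$\KHfi$, the five axioms restricted to $\C(T)$ become exactly Hilbert's axioms $1$, $2$, $3$ together with clauses c) and d) of Proposition \ref{pHil}. By that proposition, $\C(T)$ endowed with the semilattice structure inherited from $T$ and the restriction of $\ra$ is a bounded Hilbert algebra with infimum. The fact that this restriction really takes values in $\C(T)$ is the content of axiom $\Ko$ of $\KhIS$, which gives $\ce\le x\ra y$ whenever $y\ge\ce$.

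I do not foresee a substantive obstacle; the real work is bookkeeping with pairs. The design of $\KHon$--$\KHfi$ is precisely such that they encode the Hilbert-algebra-with-infimum structure on the elements $\ge\ce$, and the pairing construction $\K$ transfers this structure faithfully to $\K(H)$ via coordinate-wise computation.
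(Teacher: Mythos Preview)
Your proposal is correct and follows essentially the same route as the paper: both arguments verify $\KHon$--$\KHfi$ in $\K(H)$ by coordinatewise computation, reducing each to the corresponding clause of Proposition~\ref{pHil}, and both dispatch part~(b) by observing that $\KHon$--$\KHfi$ restricted to $\C(T)$ become exactly the characterizing conditions of that proposition. Your explicit invocation of Propositions~\ref{Kl} and~\ref{p2} to handle the $\KhIS$ and $\hIS$ layers, and your remark that $0\ra b=1$ is what collapses the second coordinates, make the bookkeeping slightly more transparent than the paper's version, but the substance is identical.
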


\begin{proof}
Let $H\in \Hil$ and take $(a,b)$, $(d,e)$ and $(f,g)$ in $\K(H)$.
In what follows we will use Proposition \ref{pHil}.

Taking into account that $a\ra (d\ra a) = 1$ we obtain
\[
\begin{array}
[c]{lllll}
(a,0) \ra ((d,e) \ra (a,0)) & = &  (a,0) \ra (d\ra a,0) &  & \\
& = & (a \ra (d\ra a) ,0)&  & \\
& = & (1,0),& &
\end{array}
\]
which is the condition $\KHon$.

Since $a\ra (d\ra f) = (a\ra d) \ra (a\ra f)$, then
\[
\begin{array}
[c]{lllll}
(a,b) \ra ((d,0) \ra (f,0)) & = &  (a,b) \ra (d\ra f,0) &  & \\
& = & (a\ra (d\ra f), 0)&  & \\
& = & ((a\ra d) \ra (a \ra f), 0)&  & \\
& = & (a\ra d,0) \ra (a \ra f), 0)&  & \\
& = & ((a,b) \ra (d,0)) \ra ((a,b)\ra (f,0)).& &
\end{array}
\]
Hence, we have proved $\KHtw$.

In order to prove $\KHth$ suppose that $(a,b) \ra (d,e) = (d,e)
\ra (a,b) = (1,0)$, so $a\ra d = d\ra a = 1$ and $b\ra e = e \ra b
= 1$. Then $a = d$ and $b = e$, i.e., $(a,b) = (d,e)$, which was
our aim.

The condition $\KHfo$ is a consequence of the equality $a\we (a\ra
d) = a\we d$. Indeed,
\[
\begin{array}
[c]{lllll}
(a,b) \we ((a,0) \ra (d,0)) & = &  (a,b) \we (a\ra d,0) &  & \\
& = & (a\we (a\ra d), b)&  & \\
& = & (a\we d,b)&  & \\
& = & (a,b) \we (d,0).& &
\end{array}
\]

Finally, we will prove $\KHfi$. By the condition $a\ra (d\we f)
\leq (a\ra d)\we (a\ra f)$ we have that
\[
\begin{array}
[c]{lllll}
(a,b) \ra ((d,0)\we (f,0)) & = &  (a,b) \ra (d\we f,0) &  & \\
& = & (a \ra (d\we f), 0)&  & \\
& \preceq & ((a\ra d)\we (a\ra f), 0)&  & \\
& = & (a\ra d, 0) \we (a\ra f,0)&  & \\
& = & ((a,b) \ra (d,0)) \we ((a,b) \ra (f,0)).& &
\end{array}
\]
Then $\K(H) \in \KHil$. Finally, it follows from Proposition
\ref{pHil} that if $T\in \KHil$, then $\C(T) \in \Hil$
\end{proof}

We write $\KHilp$ for the full subcategory of $\KHil$ whose
objects satisfy $(\CK)$. The following corollary follows from
Theorem \ref{pt} and Lemma \ref{lHa}.

\begin{cor} \label{corhil}
The functors $\K$ and $\C$ establish a categorical equivalence
between  $\Hil$ and $\KHilp$ with natural isomorphisms $\alpha$
and $\beta$.
\end{cor}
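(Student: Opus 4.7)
The plan is to deduce this corollary by restriction from Theorem \ref{pt}, which already establishes that $\K$ and $\C$ form an equivalence between $\hIS$ and $\KhISp$ with natural isomorphisms $\alpha$ and $\beta$. Since $\Hil$ is a full subcategory of $\hIS$ and, by definition, $\KHilp$ is a full subcategory of $\KhISp$, it will suffice to verify that the functors $\K$ and $\C$ restrict to these subcategories; the components $\alpha_H$ and $\beta_T$ will then automatically remain isomorphisms and natural in the restricted setting.

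First I would check that $\K$ sends objects of $\Hil$ to objects of $\KHilp$. Given $H\in \Hil$, Lemma \ref{lHa}(a) yields $\K(H)\in \KHil$; moreover, the observation recorded in Section \ref{s3} right after Remark \ref{r0} shows that $\K(P)$ satisfies $(\CK)$ for every $P\in \Po$, so in particular $\K(H)\in \KHilp$. Since $\KHilp$ is full in $\KhISp$, the action of $\K$ on morphisms transfers unchanged from Theorem \ref{pt}. Dually, Lemma \ref{lHa}(b) gives $\C(T)\in \Hil$ for every $T\in \KHilp\subseteq \KHil$, and morphisms are again inherited by fullness, so $\C$ restricts to a functor $\KHilp \to \Hil$.

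It then remains to observe that the natural isomorphisms $\alpha$ and $\beta$ of Theorem \ref{pt} restrict correctly. For $H\in \Hil$, Remark \ref{a} tells us that $\alpha_H$ is an isomorphism in $\hIS$; both its domain and codomain now lie in $\Hil$, so by fullness $\alpha_H$ is also an isomorphism in $\Hil$. For $T\in \KHilp$, Lemma \ref{b} guarantees that $\beta_T$ is an isomorphism in $\KhISp$, and since both $T$ and $\K(\C(T))$ lie in $\KHilp$ by the previous paragraph, $\beta_T$ is also an isomorphism in $\KHilp$. Naturality of the two families is inherited directly from Theorem \ref{pt}.

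There is no genuinely difficult step in this strategy. The only point that must be handled with care is ensuring that $\K(H)$ indeed lands in $\KHilp$ rather than merely in $\KHil$, which is precisely what the automatic validity of $(\CK)$ on algebras of the form $\K(P)$ provides. Once this is in place, the rest of the argument is a routine restriction of the already-established equivalence between $\hIS$ and $\KhISp$.
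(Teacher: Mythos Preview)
Your proposal is correct and follows exactly the paper's approach: the paper's proof is the single sentence ``It follows from Theorem \ref{pt} and Lemma \ref{lHa},'' and what you have written is precisely a careful unpacking of that restriction argument. The only minor slip is locational: the observation that $\K(P)$ always satisfies $(\CK)$ appears just \emph{before} Remark \ref{r0}, not after it.
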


\subsection{Kalman's construction for $\IS$}

We write $\KIS$ for the full subcategory of $\KhIS$ whose objects
$T$ satisfy the condition $\Ksi$ and the following additional
condition for every $x,y \in T$:
\begin{enumerate}
\item[$\Kse$] $x \ra ((y\vee \ce) \we (z\vee \ce)) = (x \ra (y\vee
\ce))\we (x \ra (z\vee \ce))$.
\end{enumerate}

\begin{lem} \label{lis2}
\begin{enumerate} [\normalfont (a)]
\item[]
\item If $H\in \IS$, then $\K(H) \in \KIS$. \item If $T\in \KIS$,
then $\C(T) \in \IS$.
\end{enumerate}
\end{lem}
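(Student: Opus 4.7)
My plan is to reduce both parts to applications of Lemma \ref{lis}, using the previously established Proposition \ref{Kl}, Proposition \ref{p2}, and Lemma \ref{CK-K6}. Since $\IS \subseteq \hIS$ and $\KIS \subseteq \KhIS$, the ``hemi-implicative'' part of the structure is already handled; I only need to check the two extra equations in Lemma \ref{lis} on each side, namely $a\ra(b\we d)=(a\ra b)\we(a\ra d)$ and $a\leq b\ra(a\we b)$, and match them with $\Kse$ and $\Ksi$.

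For part (a), let $H\in\IS$. By Proposition \ref{Kl} we already have $\K(H)\in\KhIS$. Since implicative semilattices satisfy $a\leq b\ra(a\we b)$ (Lemma \ref{lis}), Lemma \ref{CK-K6} gives $\Ksi$ for $\K(H)$. It remains to verify $\Kse$: taking arbitrary $(a,b),(d,e),(f,g)\in\K(H)$ and using that $(d,e)\vee\ce=(d,0)$, $(f,g)\vee\ce=(f,0)$, and $(d,0)\we(f,0)=(d\we f,0)$ by Lemma \ref{l1}, both sides reduce to a pair whose first coordinate is compared in $H$. The distributivity equation $a\ra(d\we f)=(a\ra d)\we(a\ra f)$, valid in $\IS$ by Lemma \ref{lis}, then yields the equality, and the second coordinate is $0$ on both sides. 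Finally, I need to check that morphisms go through, but this is immediate since a morphism in $\IS$ is in particular a morphism in $\hIS$, and Proposition \ref{Kl} already showed $\K(f)$ preserves $\ra$.

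For part (b), let $T\in\KIS$. By Proposition \ref{p2}, $\C(T)\in\hIS$, so $(\C(T),\we,\ra,\ce,1)$ is already an upper bounded semilattice in which $a\ra a=1$ and $a\we(a\ra b)\leq b$ hold. To upgrade to $\IS$, I invoke Lemma \ref{lis} and verify the two remaining conditions for $x,y,z\in\C(T)$. Since $x,y,z\geq\ce$, we have $y=y\vee\ce$ and $z=z\vee\ce$; moreover, $\KMth$ ensures $y\we z$ exists in $T$, and trivially $y\we z\geq\ce$, so the meet in $\C(T)$ agrees with that in $T$. Then $\Kse$ gives directly $x\ra(y\we z)=(x\ra y)\we(x\ra z)$, and $\Ksi$ gives $x\leq(y\vee\ce)\ra((x\vee\ce)\we(y\vee\ce))=y\ra(x\we y)$. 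This finishes the verification; the functoriality on morphisms is automatic since a morphism in $\KhIS$ between objects of $\KIS$ restricts to an $\hIS$-morphism on centers, which by the characterization just proved is in fact an $\IS$-morphism.

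The calculations involved are genuinely routine once the right reductions are set up; the only potentially delicate point is checking that the meets appearing in conditions 3) and 4) of Lemma \ref{lis} really are the meets computed in $\C(T)$, which is why I flag the use of $\KMth$ and the observation $\ce\leq y\we z$ explicitly. Otherwise the proof is a direct matching of axioms.
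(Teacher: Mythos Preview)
Your proof is correct and follows essentially the same approach as the paper: both parts are reduced to Lemma \ref{lis}, with $\Ksi$ obtained from Lemma \ref{CK-K6} and $\Kse$ verified by the direct coordinate computation $(a,b)\ra(d\we f,0)=(a\ra(d\we f),0)=((a\ra d)\we(a\ra f),0)$. Your treatment of part (b) is more explicit than the paper's (which simply says it ``is also a consequence of Lemma \ref{lis}''), and your remarks on morphisms are extraneous since the lemma concerns only objects, but the substance is the same.
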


\begin{proof}
Let $H\in \IS$. The fact that $\K(H)$ satisfies $\Ksi$ follows
from lemmas \ref{lis} and \ref{CK-K6}. On the other hand, it
follows from Lemma \ref{lis} that
\[
\begin{array}
[c]{lllll}
(a,b)\ra ((d,0)\we (f,0)) & = &  (a,b) \ra (d\we f,0) &  & \\
& = & (a \ra (d\we f),0)&  & \\
& = & ((a\ra d)\we (a\ra f),0)&  & \\
& = & ((a,b)\ra (d,0))\we ((a,b)\ra (f,0)).&  &
\end{array}
\]
Thus, we have the condition $\Kse$. Then $\K(H) \in \KIS$.

The fact that if $T\in \KIS$, then $\C(T) \in \IS$ is also
consequence of Lemma \ref{lis}.
\end{proof}

The following corollary follows from Theorem \ref{pt}, Lemma
\ref{lCK} and Lemma \ref{lis2}.

\begin{cor} \label{coris}
The functors $\K$ and $\C$ establish a categorical equivalence
between $\IS$ and $\KIS$ with  natural isomorphisms $\alpha$ and
$\beta$.
\end{cor}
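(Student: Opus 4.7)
The plan is to deduce this corollary from Theorem \ref{pt} together with lemmas \ref{lCK} and \ref{lis2}, using only the fact that $\IS$ is a full subcategory of $\hIS$ and $\KIS$ is a full subcategory of $\KhIS$. What needs to be verified is: (i) $\K\colon \hIS \to \KhIS$ restricts to a functor $\IS \to \KIS$; (ii) $\C\colon \KhIS \to \hIS$ restricts to a functor $\KIS \to \IS$; (iii) $\KIS$ is contained in $\KhISp$, so that the components of $\beta$ are isomorphisms on $\KIS$; and (iv) the naturality of $\alpha$ and $\beta$ carries over to the restricted functors.

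First, Lemma \ref{lis2}(a) guarantees that $\K$ sends objects of $\IS$ to objects of $\KIS$. Since the morphisms of $\IS$ are exactly the morphisms of $\hIS$ between $\IS$-objects, and $\KIS$ is full in $\KhIS$, Proposition \ref{Kl} ensures that $\K$ sends any $\IS$-morphism to a $\KIS$-morphism. The analogous argument, using Lemma \ref{lis2}(b) and Proposition \ref{p2}, shows that $\C$ restricts to a functor $\KIS \to \IS$.

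The crucial step is (iii): every object $T$ of $\KIS$ satisfies $\Ksi$ by definition, so by Lemma \ref{lCK} it satisfies $(\CK)$. Hence $\KIS$ sits inside $\KhISp$. Therefore, for every $T \in \KIS$, Theorem \ref{pt} yields that $\beta_T$ is an isomorphism in $\KhIS$, and by fullness it is an isomorphism in $\KIS$. Likewise, for every $H \in \IS$, Remark \ref{a} together with Theorem \ref{pt} gives that $\alpha_H$ is an isomorphism in $\hIS$, hence in $\IS$ by fullness.

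Finally, the naturality squares for $\alpha$ and $\beta$ with respect to morphisms of $\IS$ and $\KIS$ are particular cases of the naturality squares in $\hIS$ and $\KhISp$ already established in Theorem \ref{pt}. I do not anticipate a real obstacle: all the substantial work was done in Lemma \ref{lis2} (closure of the classes $\IS$ and $\KIS$ under $\K$ and $\C$) and Lemma \ref{lCK} (the implication $\Ksi \Rightarrow (\CK)$); once these are in place, the corollary is a routine transfer of the equivalence of Theorem \ref{pt} to full subcategories.
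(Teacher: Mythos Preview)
Your proposal is correct and follows essentially the same approach as the paper's own proof, which simply cites Theorem~\ref{pt}, Lemma~\ref{lCK}, and Lemma~\ref{lis2}; you have merely spelled out in detail how these three results combine to give the restricted equivalence.
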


Since $\IS$ is a full subcategory of $\Hil$, it follows from
corollaries \ref{corhil} and \ref{coris} that $\KIS$ is a full
subcategory of $\KHilp$.

\subsection{Kalman's construction for $\hBDL$}

In what follows we define a category which will be related with
the category $\hBDL$.

\begin{defn}
We write $\KhBDL$ for the category whose objects are the algebras
$(T,\we,\vee,$ $\ra, {\sim},\ce,0,1)$ of type $(2,2,2,1,0,0,0)$
such that $(T,\we,\vee, {\sim},\ce,0,1)\in \cKl$ and the
conditions $\Ko$, $\Ktw$, $\Kth$, $\Kfo$ and $\Kfi$ are satisfied.
The morphisms of the category are the corresponding algebra
homomorphisms.
\end{defn}

By the Example \ref{KHil}, in every centered Kleene algebra
$(T,\we,\vee,0,\ce,0,1)$ we can define a binary operation $\ra$
such that $(T,\we,\vee,\ra,\ce,0,1) \in \KhBDL$. In particular, if
$(T,\we,\vee,0, \ce,0,1)$ is the centered Kleene algebra given in
\cite[Example 2.5]{CCSM}, then $(T,\we,\vee,\ra,\ce,0,1)\in
\KhBDL$, where $\ra$ is the implication considered in Example
\ref{KHil}. It is immediate that $(T,\we,\vee,\ce,0,1)$ does not
satisfy the condition ($\CK$).

Note that $(H,\we,\vee,\ra, 0,1) \in \hBDL$ if and only if
$(H,\we,\vee,0,1) \in \DL$ and $(H,\we,\ra,0,1) \in \hIS$. Also
note that $(T,\we,\vee,\ra, {\sim},\ce,0,1) \in \KhBDL$ if and
only if $(T,\we,\vee, {\sim},\ce,0,1)\in \cKl$ and $(T, \leq,
{\sim}, \ra, \ce, 0,1) \in \KhIS$. We write $\KhBDLp$ for the full
subcategory of $\KhBDL$ whose objects satisfy $(\CK)$.

\begin{thm} \label{ptb}
The functors $\K$ and $\C$ establish a categorical equivalence between
$\hBDL$ and $\KhBDLp$ with natural isomorphisms
$\alpha$ and $\beta$.
\end{thm}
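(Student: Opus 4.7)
The plan is to assemble the equivalence from the two equivalences already at our disposal: Theorem \ref{Kce}, which provides an equivalence between $\DL$ and $\cKlp$, and Theorem \ref{pt}, which provides an equivalence between $\hIS$ and $\KhISp$. The paper has already observed that an algebra $(H,\we,\vee,\ra, 0,1)$ lies in $\hBDL$ if and only if its reduct $(H,\we,\vee,0,1)$ lies in $\DL$ and its reduct $(H,\we,\ra,0,1)$ lies in $\hIS$, and dually that $(T,\we,\vee,\ra, {\sim},\ce,0,1) \in \KhBDL$ if and only if $(T,\we,\vee, {\sim},\ce,0,1)\in \cKl$ and $(T, \leq, {\sim}, \ra, \ce, 0,1) \in \KhIS$. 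So the whole task amounts to checking that the two functorial constructions agree on the shared bits and combine consistently.

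First, I would verify that $\K$ is a well-defined functor from $\hBDL$ to $\KhBDLp$: given $H\in \hBDL$, applying Theorem \ref{Kce} to the $\DL$-reduct yields $(\K(H),\we,\vee,{\sim},\ce,0,1)\in \cKlp$, while applying Proposition \ref{Kl} to the $\hIS$-reduct equips this Kleene algebra with an implication $\ra$ defined by $(a,b) \ra (d,e) = ((a\ra d) \we (e\ra b), a\we e)$ so that conditions $\Ko$--$\Kfi$ hold. Thus $\K(H)\in \KhBDL$, and condition $(\CK)$ holds as in Theorem \ref{Kce}. For a morphism $f$ in $\hBDL$, the map $\K(f)$ is a morphism of $\cKl$ (by Theorem \ref{Kce}) that also preserves $\ra$ (by Proposition \ref{Kl}), hence a morphism of $\KhBDL$. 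Next, I would verify that $\C$ restricts to a functor from $\KhBDLp$ to $\hBDL$: if $T\in \KhBDLp$, then $\C(T)$ is in $\DL$ (Theorem \ref{Kce}) and, as a hemi-implicative semilattice with the restricted $\ra$, is in $\hIS$ (Proposition \ref{p2}); together these give $\C(T) \in \hBDL$. The morphism part follows identically from the two ingredient equivalences.

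It then remains to check that $\alpha$ and $\beta$ still serve as natural isomorphisms at the level of $\hBDL$ and $\KhBDLp$. For $H\in \hBDL$, the map $\alpha_H : H \to \C(\K(H))$, $a\mapsto (a,0)$, is already a $\DL$-isomorphism by Theorem \ref{Kce} and preserves $\ra$ by Remark \ref{a}, so it is a $\hBDL$-isomorphism. For $T\in \KhBDLp$, the map $\beta_T : T \to \K(\C(T))$, $x \mapsto (x\vee \ce,{\sim} x \vee \ce)$, is a $\cKl$-isomorphism by Theorem \ref{Kce} (using $(\CK)$) and preserves $\ra$ by Lemma \ref{b}; hence it is an isomorphism in $\KhBDL$. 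Naturality was already established for both $\alpha$ and $\beta$ in the proofs of Theorems \ref{Kce} and \ref{pt}, and the same diagrams commute here since the underlying set-level maps are unchanged.

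There is no genuine obstacle: the argument is a clean gluing of the two prior equivalences along the shared underlying categories and functors. The only place that requires a little care is checking that the operation $\ra$ introduced on $\K(H)$ by Proposition \ref{Kl} coincides with the one demanded by the ambient Kleene-algebra structure from $\cKlp$, and that the $(\CK)$ condition, which governs the full subcategories on both sides, is preserved and reflected by the construction. Both points are already contained in the earlier sections, so the proof reduces essentially to a citation of Theorem \ref{Kce}, Theorem \ref{pt}, Proposition \ref{Kl}, Proposition \ref{p2}, Remark \ref{a} and Lemma \ref{b}.
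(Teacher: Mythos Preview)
Your proposal is correct and follows exactly the paper's approach: the paper's proof is the single line ``It follows from Theorem \ref{Kce} and Theorem \ref{pt},'' and your argument is precisely the detailed unpacking of that citation, using the characterization of $\hBDL$ (resp.\ $\KhBDL$) as having simultaneously a $\DL$-reduct and an $\hIS$-reduct (resp.\ a $\cKl$-reduct and a $\KhIS$-reduct).
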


\begin{proof}
It follows from Theorem \ref{Kce} and Theorem \ref{pt}.
\end{proof}

\subsection{Kalman's construction for $\SH$}

We write $\KSH$ for the full subcategory of $\KhBDL$ whose objects
satisfy the condition $\KHfo$ and the following additional
condition:
\begin{enumerate}
\item[$\KSHth$] $x \we ((y\vee \ce) \ra (z\vee \ce)) = x
\we(((x\vee \ce) \we (y\vee \ce))\ra ((x\vee \ce)\we (z\vee
\ce)))$.
\end{enumerate}
\newpage

\begin{lem} \label{lSH}
\begin{enumerate} [\normalfont (a)]
\item[]
\item If $H\in \SH$, then $\K(H) \in \KSH$. \item If $T\in \KSH$,
then $\C(T) \in \SH$. \item If $T \in \KSH$, then $T$ satisfies
$\Ksi$. In particular, $T$ satisfies $(\CK)$.
\end{enumerate}
\end{lem}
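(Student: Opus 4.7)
The plan is to handle the three items in order, reducing each to properties already known for semi-Heyting algebras or to the Kleene-poset machinery of Sections \ref{Kc} and \ref{s3}. Parts (a) and (b) will be essentially routine unpackings of the defining axiom schemes, whereas in part (c) the main subtlety is to promote an inequality that lives a priori only in $\C(T)$ to an inequality in the ambient $T$; this will be done using condition 6 of the definition of Kleene poset.

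For part (a), since $\SH \subseteq \hBDL$, Theorem \ref{ptb} immediately yields $\K(H) \in \KhBDLp \subseteq \KhBDL$, so only the extra conditions $\KHfo$ and $\KSHth$ remain. Writing $x = (a,b)$, $y = (d,e)$, $z = (f,g)$ in $\K(H)$, one has $x \vee \ce = (a,0)$, $y \vee \ce = (d,0)$, $z \vee \ce = (f,0)$, and $(a,0) \ra (d,0) = (a \ra d, 0)$ by (\ref{eqi1}). A direct computation reduces $\KHfo$ to the identity $a \we (a \ra d) = a \we d$ in $H$ (which is $\SHtw$), and reduces $\KSHth$ to $a \we (d \ra f) = a \we ((a \we d) \ra (a \we f))$ (which is $\SHth$), both valid since $H \in \SH$. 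Part (b) is similarly immediate: $\KSH$ being a full subcategory of $\KhBDL$ gives $\C(T) \in \hBDL$ by Theorem \ref{ptb}, and if $x, y, z \in \C(T)$ then $x \vee \ce = x$, $y \vee \ce = y$, $z \vee \ce = z$, so $\KHfo$ specializes to $\SHtw$ on $\C(T)$, $\KSHth$ specializes to $\SHth$, and $\SHfo$ is just $\Kth$.

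For part (c), the key is the semi-Heyting inequality $a \leq b \ra (a \we b)$, which holds in any semi-Heyting algebra (as recorded in Section \ref{s4}). By part (b), $\C(T) \in \SH$, so applying this inequality to $x \vee \ce, y \vee \ce \in \C(T)$ gives $x \vee \ce \leq w$, where $w := (y \vee \ce) \ra ((x \vee \ce) \we (y \vee \ce))$. To conclude $x \leq w$, I invoke condition 6 of Definition \ref{def:Kleeneposet}. On the join side, $x \vee \ce \leq w \leq w \vee \ce$. On the meet side, $\Kfo$ gives
\[
w \we \ce = ({\sim}(y \vee \ce) \we \ce) \vee \bigl(((x \vee \ce) \we (y \vee \ce)) \we \ce\bigr) = ({\sim} y \we \ce) \vee \ce = \ce,
\]
using that $(x \vee \ce) \we (y \vee \ce) \geq \ce$, so $x \we \ce \leq \ce = w \we \ce$. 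Condition 6 then delivers $x \leq w$, which is precisely $\Ksi$. The final assertion that $T$ satisfies $(\CK)$ is then immediate from Lemma \ref{lCK}.
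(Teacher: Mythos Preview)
Your proof is correct. Parts (a) and (b) proceed essentially as in the paper: direct computation in $\K(H)$ reduces $\KHfo$ and $\KSHth$ to the semi-Heyting identities $\SHtw$ and $\SHth$, and specializing $\KHfo$, $\KSHth$, $\Kth$ to elements $\geq \ce$ recovers $\SHtw$, $\SHth$, $\SHfo$ for $\C(T)$.

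Part (c), however, is argued differently. The paper works entirely inside $T$: it applies $\KSHth$ (with $z = (x\vee\ce)\we(y\vee\ce)$) and then $\Kth$ to compute directly that $x \we \bigl((y\vee\ce)\ra ((x\vee\ce)\we(y\vee\ce))\bigr) = x$, which gives $\Ksi$ in one stroke. You instead export the problem to $\C(T)$ via part (b), obtain $x\vee\ce \leq w$ there from the semi-Heyting inequality $a \leq b\ra(a\we b)$, and then lift this back to $x \leq w$ using $\Kfo$ to compute $w\we\ce = \ce$ together with condition 6 of the Kleene-poset definition. Both arguments are valid; the paper's route is more self-contained (it uses only $\KSHth$ and $\Kth$, never invoking part (b) or $\Kfo$), whereas your route illustrates nicely the general ``compute above $\ce$, then descend via condition 6'' paradigm that underlies much of Section \ref{s3}.
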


\begin{proof}
Let $H\in \SH$. The condition $\KHfo$ follows from $\SHtw$ (see
proof of Lemma \ref{lHa}). Let $(a,b)$, $(d,e)$ and $(f,g)$ in
$\K(H)$. Taking into account $\SHth$ we have that
\[
\begin{array}
[c]{lllll}
(a,b)\we ((d,0)\ra (f,0)) & = &  (a,b) \we (d\ra f,0) &  & \\
& = & (a \we (d\ra f),b)&  & \\
& = & (a \we ((a\we d)\ra (a\we f)),b)&  & \\
& = & (a,b) \we ((a\we d)\ra (a\we f),0)&  & \\
& = & (a,b) \we ((a\we d,0) \ra (a\we f,0))&  & \\
& = & (a,b) \we (((a,0)\we (d,0)) \ra ((a,0)\we (f,0))),& &
\end{array}
\]
which is the condition $\KSHth$. Then $\K(H) \in \KSH$.

It is immediate that if $T\in \KSH$ then $\C(T) \in \SH$. In order
to prove that $T$ satisfies $\Ksi$ we will use $\Kth$ and $\KSHth$
as follows:
\[
\begin{array}
[c]{lllll}
x\we ((y\vee \ce) \ra ((x\vee \ce)\we (y\vee \ce))) & = &  x\we ((y\vee \ce) \ra (((x\vee \ce) \we (y\vee \ce)) \vee \ce) &  & \\
%& = & x \we (((x\vee \ce) \we (y\vee \ce)) \ra ((x\vee \ce) \we (((x\vee \ce)\we (y\vee \ce)) \vee \ce)))  &  & \\
& = & x \we (((x\vee \ce) \we (y\vee \ce)) \ra ((x\vee \ce) \we (y\vee \ce))) &  & \\
& = & x\we 1&  & \\
& = & x.& &
\end{array}
\]
Then $x\leq (y\vee \ce) \ra ((x\vee \ce) \ra (y\vee \ce))$, i.e.,
the condition $\Ksi$. Therefore, it follows from Lemma \ref{lCK}
that $T$ satisfies ($\CK$).
\end{proof}

\begin{thm}
The functors $\K$ and $\C$ establish a categorical equivalence between
$\SH$ and  $\KSH$ with  natural isomorphisms
$\alpha$ and $\beta$.
\end{thm}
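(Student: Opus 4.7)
The plan is to reduce this statement to Theorem \ref{ptb} by showing that the equivalence it provides restricts suitably to the subcategories at hand. First I would observe that $\SH$ is a full subcategory of $\hBDL$, while $\KSH$ is by definition a full subcategory of $\KhBDL$. By part (c) of Lemma \ref{lSH}, every object of $\KSH$ satisfies the condition $(\CK)$; hence $\KSH$ is actually a full subcategory of $\KhBDLp$. So both $\SH$ and $\KSH$ sit inside the equivalent categories $\hBDL$ and $\KhBDLp$ of Theorem \ref{ptb}.

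Next I would verify that the functors $\K$ and $\C$ from Theorem \ref{ptb} restrict to functors between $\SH$ and $\KSH$. On objects this is precisely the content of parts (a) and (b) of Lemma \ref{lSH}. On morphisms there is nothing to check: $\SH$ and $\KSH$ are full subcategories, so any $\hBDL$-morphism between two semi-Heyting algebras is automatically an $\SH$-morphism, and analogously for $\KSH$. Thus the assignments $\K$ and $\C$ indeed restrict to functors $\SH \to \KSH$ and $\KSH \to \SH$.

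Finally, the natural isomorphisms $\alpha$ and $\beta$ of Theorem \ref{ptb} pass to the restricted situation without change. For $H \in \SH$, the map $\alpha_H : H \to \C(\K(H))$ is already known to be an isomorphism in $\hBDL$; since $\K(H) \in \KSH$ by Lemma \ref{lSH}(a) and $\C(\K(H)) \in \SH$ by Lemma \ref{lSH}(b), and since $\SH$ is a full subcategory of $\hBDL$, $\alpha_H$ is an isomorphism in $\SH$. Analogously, for $T \in \KSH$ we have $T \in \KhBDLp$ by Lemma \ref{lSH}(c), so $\beta_T : T \to \K(\C(T))$ is an isomorphism in $\KhBDL$, and by fullness of $\KSH$ it is an isomorphism there. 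Naturality of $\alpha$ and $\beta$ is inherited from the ambient equivalence.

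There is essentially no obstacle here beyond checking that everything restricts properly; the genuine work has already been done in Lemma \ref{lSH}, whose part (c) is what guarantees that we do not need to impose $(\CK)$ explicitly on the $\KSH$ side. Thus the statement follows from Theorem \ref{ptb} and Lemma \ref{lSH}.
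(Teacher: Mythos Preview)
Your proposal is correct and follows exactly the approach of the paper, which simply states that the result follows from Theorem \ref{ptb} and Lemma \ref{lSH}. You have spelled out the routine verification that the equivalence restricts to the full subcategories, with the key point being Lemma \ref{lSH}(c) ensuring $\KSH \subseteq \KhBDLp$.
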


\begin{proof}
It follows from Theorem \ref{ptb} and Lemma \ref{lSH}.
\end{proof}

\section{Well-behaved congruences in $\KhIS$ and congruences in
$\KhBDL$} \label{s6}

In this section we introduce the concept of the well-behaved
congruences over objects of $\KhIS$. They  are equivalence
relations with some additional properties. We will prove that if
$T\in \KhIS$ and $\theta$ is a well-behaved congruence on $T$,
then it is possible to define on the quotient $T/\theta$ a partial
order and operations so that $T/\theta \in \KhIS$. For $T\in
\KhIS$ we study the relation between the well-behaved congruences
of $T$ and the congruences of $\C(T)$, and in particular for the
cases where $T\in \KHil$ or $T \in \KIS$. For $T\in \KhBDL$ or
$T\in \KSH$ we also study the relation between the congruences of
$T$ and the congruences of $\C(T)$. Finally,  we study the principal
well-behaved congruences of the objects in $\KhIS$, $\KHil$, and
$\KIS$  and the principal congruences of the objects in $\KhBDL$
and $\KSH$. \vspace{1pt}

We start by fixing notation and giving some useful definitions.
Let $X$ be a set, $x \in X$ and $\theta$ an equivalence relation on
$X$. We write $x/\theta$ to indicate the equivalence class of $x$
associated with the equivalence relation $\theta$, and $X/\theta$
to indicate the quotient set of $X$ associated with $\theta$ (i.e.,
the set of equivalence classes). If $T$ is an algebra, we write
$\Con(T)$ to denote the set of as well as the lattice of
congruences of $T$.

\begin{defn} \label{defwbc}
Let $T\in \KhIS$. We say that an equivalence relation $\theta$ of
$T$ is a \emph{well-behaved congruence of $T$} if it satisfies the
following conditions:
\begin{enumerate}
\item[$\Ton$] $\theta \in \Con((T,\ra,{\sim}))$. \item[$\Ttw$] For
$x,y \in T$, $(x,y) \in \theta$ if and only if $(x\vee \ce,y\vee
\ce) \in \theta$ and $({\sim} x \vee \ce, {\sim} y \vee \ce) \in
\theta$. \item[$\Tth$] For $x$, $y$, $z$ and $w$ in $\C(T)$, if
$(x,y)\in \theta$ and $(z,w) \in \theta$, then $(x\we z,y\we w) \in
\theta$.
\end{enumerate}
\end{defn}

Note that the intersection of any family of well-behaved
congruences of $T\in \KhIS$ is a well-behaved congruence;
therefore the  set of well-behaved congruences of $T$ ordered by
the inclusion relation is a complete lattice.

\begin{rem}
The definition of well-behaved congruence can be also given for
algebras of $\KhBDL$. In this case,  if  $T\in \KhBDL$, then every congruence of $T$ is a  well-behaved  congruence.
\end{rem}

In what follows we define a binary relation in $T/\theta$, where
$T\in \KhIS$ and $\theta$ is a well-behaved congruence of $T$.

\begin{defn} \label{wbo}
Let $T \in \KhIS$. If $\theta$ is a well-behaved congruence of $T$,
then we define in $T/\theta$ the following binary relation $\ll_{\theta}$ by:
\[
x/\theta \ll_{\theta} y/\theta\; \textrm{if and only if}\; ((x\vee
\ce) \we (y\vee \ce), x\vee \ce) \in \theta\; \textrm{and}\;
(({\sim} y \vee \ce)\we ({\sim} x \vee \ce), {\sim} y \vee \ce)
\in \theta.
\]
\end{defn}

If there is no ambiguity, we write $\ll$ in place of
$\ll_{\theta}$. Note that the definition given is good, in the
sense that it is independent of the elements selected as representativess of  the
equivalence classes. In order to show it, suppose that $x/\theta
\ll y/\theta$. Let $z\in x/\theta$ and $w\in y/\theta$. Then by
$\Ttw$ we have that $(x\vee \ce, z\vee \ce) \in \theta$, $({\sim}
x\vee \ce, {\sim} z\vee \ce) \in \theta$, $(y\vee \ce, w\vee \ce)
\in \theta$, and $({\sim} z\vee \ce, {\sim} w\vee \ce) \in
\theta$. Hence it follows from $\Tth$ that
\[
((z\vee \ce)\we (w\vee \ce), (x\vee \ce)\we (y\vee \ce)) \in
\theta.
\]
Since, by the assumption,  $((x\vee \ce)\we (y\vee \ce), x\vee
\ce) \in \theta$, and $(x\vee \ce,z\vee \ce) \in \theta$, then
\[
((z\vee \ce)\we (w\vee \ce), z\vee \ce) \in \theta.
\]
In a similar way it can be proved that $(({\sim} z\vee \ce)\we
({\sim} w\vee \ce), \sim w\vee \ce) \in \theta$.
%Therefore, the
%definition of $\ll$ is independent of the elements selected in the
%respective equivalence classes.

\begin{rem}
Let $T\in \cKl$ and $\theta \in \Con(T)$. Since the class of
centered Kleene algebras is a variety, then $T/\theta \in \cKl$. In
particular, the lattice order $\leq$
of $T/\theta$ is given by $x/\theta \leq y/\theta$ if and only if
$x/\theta = (x\we y)/\theta$. In this framework
 the relation $\ll$ given in Definition \ref{wbo}
 coincides with the relation $\leq$, i.e.,
\[
x/\theta \leq y/\theta\;\textrm{if and only if}\; x/\theta \ll
y/\theta.
\]
To prove it  note first that from the distributivity of the
underlying lattice of $T$  it follows that $x/\theta \leq
y/\theta$ if and only if $(x \vee \ce, (x\we y)\vee \ce) \in
\theta$ and $(x\we \ce, (x\we y)\we \ce) \in \theta$. Besides, we
have that $(x\we y) \vee \ce = (x\vee \ce)\we (y\vee \ce)$. Since
$\theta$ preserves the involution, then $(x\we \ce, (x\we y) \we
\ce) \in \theta$ if and only if $({\sim} x \vee \ce, {\sim} x \vee
{\sim} y \vee \ce) \in \theta$. Therefore
\begin{equation} \label{oi1}
x/\theta \leq y/\theta\;\textrm{if and only if}\;((x\vee \ce)\we
(y\vee \ce), x\vee \ce) \in \theta\;\textrm{and}\;({\sim} x \vee
\ce, {\sim} x \vee {\sim} y \vee \ce) \in \theta.
\end{equation}
We also have
\begin{equation} \label{oi2}
({\sim} x \vee \ce, {\sim} x \vee {\sim} y \vee \ce) \in
\theta\;\textrm{if and only if}\;(({\sim} x \vee \ce) \we ({\sim}
y \vee \ce), {\sim y} \vee \ce)\in \theta.
\end{equation}
In order to prove (\ref{oi2}), suppose that $\;({\sim} x \vee \ce,
{\sim} x \vee {\sim} y \vee \ce) \in \theta$. Since $({\sim} y
\vee \ce,{\sim} y \vee \ce)\in \theta$, then taking $\we$ we obtain
that $(({\sim} x \vee \ce) \we ({\sim} y \vee \ce), {\sim y} \vee
\ce)\in \theta$. Conversely, assume that $(({\sim} x \vee \ce) \we
({\sim} y \vee \ce), {\sim y} \vee \ce)\in \theta$. Since $({\sim}
x \vee \ce, {\sim} x \vee \ce) \in \theta$, then taking $\vee$ we
obtain that $({\sim} x \vee \ce, {\sim} x \vee {\sim} y \vee \ce)
\in \theta$, so $({\sim} x \vee {\sim} y \vee \ce, {\sim} y \vee
\ce) \in \theta$. Then we have proved (\ref{oi2}). Therefore, it
follows from (\ref{oi1}) and (\ref{oi2}) that $x/\theta \leq
y/\theta$ if and only if $x/\theta \ll y/\theta$.
\end{rem}

\begin{lem} \label{rc1}
Let $T \in \KhIS$ and $\theta$ a well-behaved congruence of $T$.
Then $(T,\ll)$ is a poset.
\end{lem}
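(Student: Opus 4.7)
The goal is to verify that $\ll$ is reflexive, antisymmetric, and transitive on $T/\theta$, using only the three defining properties $\Ton$--$\Tth$ of well-behaved congruences together with basic facts about $\KhIS$ objects. Throughout, note that for any $x\in T$ we have $x\vee\ce\geq\ce$ and $\sim x\vee\ce\geq\ce$, so all meets appearing in the definition of $\ll$ exist in $T$ by $\KMth$, and on the subposet $\C(T)$ these meets behave as a semilattice operation (commutative, associative, idempotent).

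\smallskip

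For reflexivity, the computation is immediate: $(x\vee\ce)\we(x\vee\ce)=x\vee\ce$ by idempotence, hence the pair $((x\vee\ce)\we(x\vee\ce),\,x\vee\ce)$ belongs to $\theta$ because $\theta$ is an equivalence relation, and similarly for the second clause. So $x/\theta\ll x/\theta$ for every $x$.

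\smallskip

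For antisymmetry, suppose $x/\theta\ll y/\theta$ and $y/\theta\ll x/\theta$. The first gives $((x\vee\ce)\we(y\vee\ce),\,x\vee\ce)\in\theta$, the second (using commutativity of $\we$) gives $((x\vee\ce)\we(y\vee\ce),\,y\vee\ce)\in\theta$. Transitivity and symmetry of $\theta$ yield $(x\vee\ce,\,y\vee\ce)\in\theta$. The dual clauses yield, in the same way, $(\sim x\vee\ce,\,\sim y\vee\ce)\in\theta$. Condition $\Ttw$ then forces $(x,y)\in\theta$, i.e.\ $x/\theta=y/\theta$.

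\smallskip

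For transitivity, suppose $x/\theta\ll y/\theta$ and $y/\theta\ll z/\theta$. The elements $x\vee\ce$, $y\vee\ce$, $z\vee\ce$ all lie in $\C(T)$, so $\Tth$ applies: from $((x\vee\ce)\we(y\vee\ce),\,x\vee\ce)\in\theta$ and the trivial pair $(z\vee\ce,\,z\vee\ce)\in\theta$ I can $\we$-combine to obtain
\[
\bigl((x\vee\ce)\we(y\vee\ce)\we(z\vee\ce),\,(x\vee\ce)\we(z\vee\ce)\bigr)\in\theta,
\]
and symmetrically, from $((y\vee\ce)\we(z\vee\ce),\,y\vee\ce)\in\theta$ and $(x\vee\ce,\,x\vee\ce)\in\theta$,
\[
\bigl((x\vee\ce)\we(y\vee\ce)\we(z\vee\ce),\,(x\vee\ce)\we(y\vee\ce)\bigr)\in\theta.
\]
Chaining these and then using $((x\vee\ce)\we(y\vee\ce),\,x\vee\ce)\in\theta$ gives $((x\vee\ce)\we(z\vee\ce),\,x\vee\ce)\in\theta$. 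The analogous argument applied to $\sim x\vee\ce$, $\sim y\vee\ce$, $\sim z\vee\ce$ (all in $\C(T)$) yields the second required pair. Hence $x/\theta\ll z/\theta$.

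\smallskip

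The only real subtlety is making sure that in each application of $\Tth$ one stays inside $\C(T)$; this is automatic because $x\vee\ce,y\vee\ce,z\vee\ce\geq\ce$ and the well-definedness of $\ll$ on equivalence classes (already argued before the lemma) is not re-needed here. No appeal to $\Ton$ is required for this lemma; the implication- and involution-compatibility of $\theta$ will only enter in later results about the algebraic structure on $T/\theta$.
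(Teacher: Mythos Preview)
Your proof is correct and follows essentially the same route as the paper's: reflexivity from reflexivity of $\theta$, antisymmetry via chaining the two meet conditions and then invoking $\Ttw$, and transitivity by applying $\Tth$ twice (each time combining one hypothesis with a reflexive pair) to produce the triple-meet identities, then chaining back through the original hypothesis. Your remark that $\Ton$ is not needed here is accurate and matches the paper's argument.
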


\begin{proof}
Let $\theta$ be a well-behaved congruence of $T$. The reflexivity
of $\theta$ implies the reflexivity of $\ll$. In order to prove
that $\ll$ is antisymmetric, let $x, y\in T$ be such that $x/\theta
\ll y/ \theta$ and $y/\theta \ll x/\theta$, which means that
\[
((x\vee \ce) \we (y\we \ce), x\vee \ce) \in \theta,
\]
\[
(({\sim} y \vee \ce)\we ({\sim} x \vee \ce), {\sim} y \vee \ce)
\in \theta,
\]
\[
((y\vee \ce) \we (x\vee \ce), y\vee \ce) \in \theta,
\]
\[
(({\sim} x \vee \ce)\we ({\sim} y \vee \ce), {\sim} x \vee \ce)
\in \theta.
\]
Since $(x\vee \ce, (x\vee \ce)\we (y\vee \ce)) \in \theta$ and
$((x\vee \ce)\we (y\vee \ce), y\vee \ce) \in \theta$, then $(x\vee
\ce, y\vee \ce) \in \theta$. Analogously we have that $({\sim} x
\vee \ce, {\sim} y \vee \ce) \in \theta$. Hence, it follows from
$\Ttw$ that $(x,y)\in \theta$, i.e., $x/\theta = y/\theta$. We
conclude that  $\ll$ is antisymmetric. Finally we will prove that
$\ll$ is transitive. Let $x$, $y$ and $z$ be elements of $T$ such
that $x/\theta \ll y/\theta$ and $y/\theta \ll z/\theta$. In
particular,
\begin{equation} \label{EQ1}
((x\vee \ce) \we (y\vee \ce), x\vee \ce)\in \theta,
\end{equation}
\begin{equation} \label{EQ2}
((y\vee \ce)\we (z\vee \ce), y\vee \ce) \in \theta.
\end{equation}
It follows from (\ref{EQ1}) and $\Tth$ that
\begin{equation} \label{EQ3}
((x\vee \ce) \we (y\vee \ce)\we (z\vee \ce), (x\vee \ce)\we (z\vee
\ce)) \in \theta,
\end{equation}
and it follows from (\ref{EQ2}) and $\Tth$ that
\begin{equation} \label{EQ4}
((x\vee \ce)\we (y\vee \ce)\we (z\vee \ce), (x\vee \ce)\we (y\vee
\ce)) \in \theta.
\end{equation}
Hence, by (\ref{EQ3}) and (\ref{EQ4}) we obtain that $((x\vee
\ce)\we (y\vee \ce), (x\vee \ce)\we (z\vee \ce)) \in \theta$.
Thus, taking into account (\ref{EQ1}) we have  $((x\vee
\ce)\we (z\vee \ce), x\vee \ce) \in \theta$. Similarly we can show
that $(({\sim} z \vee \ce)\we ({\sim} x \vee \ce), {\sim} z \vee
\ce) \in \theta$. Thus, $x/\theta \ll z/\theta$. Hence, $\ll$ is
transitive.
\end{proof}

\begin{lem} \label{rc2}
Let $T\in \KhIS$ and $x,y \in T$. If $x\leq y$, then $x/\theta \ll
y/\theta$.
\end{lem}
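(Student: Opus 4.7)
The plan is to show that when $x \leq y$ the two pairs appearing in the definition of $\ll_\theta$ are actually diagonal pairs in $T$, so that reflexivity of $\theta$ suffices; no nontrivial use of $\theta$ itself is needed.

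First I would establish monotonicity of $\cdot \vee \ce$. Given $x \leq y$, both $x \vee \ce$ and $y \vee \ce$ exist by condition 4 of Definition \ref{def:Kleeneposet}. Since $y \vee \ce$ is an upper bound of $\{x, \ce\}$ (as $x \leq y \leq y \vee \ce$ and $\ce \leq y \vee \ce$), the supremum $x \vee \ce$ of $\{x,\ce\}$ satisfies $x \vee \ce \leq y \vee \ce$. The meet $(x \vee \ce) \we (y \vee \ce)$ exists by condition $\KMth$ (taking $x \vee \ce \geq \ce$), and from $x \vee \ce \leq y \vee \ce$ it equals $x \vee \ce$. Thus the first defining pair $((x\vee \ce) \we (y\vee \ce),\, x\vee \ce)$ is literally $(x\vee \ce, x\vee \ce)$, which lies in $\theta$ by reflexivity.

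For the second pair I would use that ${\sim}$ is order-reversing (condition 2 of Definition \ref{def:Kleeneposet}): $x \leq y$ gives ${\sim}y \leq {\sim}x$, and the same monotonicity argument with $\ce$ yields ${\sim}y \vee \ce \leq {\sim}x \vee \ce$. Again by $\KMth$ the meet $({\sim}y \vee \ce) \we ({\sim}x \vee \ce)$ exists and equals ${\sim}y \vee \ce$, so the second required pair $(({\sim}y\vee \ce)\we({\sim}x\vee \ce),\, {\sim}y\vee \ce)$ is also on the diagonal and hence in $\theta$. Combining both observations, $x/\theta \ll_\theta y/\theta$.

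There is essentially no obstacle here; the only point that deserves explicit verification is that the meets in the definition of $\ll_\theta$ genuinely exist, which is exactly what $\KMth$ guarantees. The argument does not require $\theta$ to be anything more than reflexive, so properties $\Ton$, $\Ttw$, $\Tth$ of a well-behaved congruence are not invoked at this stage.
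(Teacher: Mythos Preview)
Your proof is correct and follows essentially the same approach as the paper: both argue that $x\leq y$ forces the two pairs in the definition of $\ll_\theta$ to lie on the diagonal, so reflexivity of $\theta$ finishes the job. You are in fact a bit more explicit than the paper in justifying the existence of the relevant meets via $\KMth$ and the monotonicity of $\cdot\vee\ce$, which is a welcome clarification.
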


\begin{proof}
Let $x\leq y$. Then ${\sim} y \leq {\sim} x$. Hence, we have
$x\vee \ce \leq y \vee \ce$ and ${\sim} y \vee \ce \leq {\sim} x
\vee \ce$, i.e., $(x\vee \ce)\we (y\vee \ce) = x\vee \ce$ and
$({\sim} y \vee \ce) \we ({\sim} x \vee \ce) = {\sim} y \vee \ce$.
Since $\theta$ is a reflexive relation, then $((x\vee \ce) \we
(y\we \ce), x\vee \ce) \in \theta$ and $(({\sim} y \vee \ce)\we
({\sim} x \vee \ce), {\sim} y \vee \ce) \in \theta$, i.e.,
$x/\theta \ll y/\theta$.
\end{proof}

For $T\in \KhIS$ and $\theta$ a well-behaved congruence of $T$, we
have in particular that $\theta$ is a congruence of
$(T,{\sim},\ra)$. Let us  use also the symbols ${\sim}$ and $\ra$ to refer to the
respective induced operations on $T/\theta$.

\begin{prop}
Let $(T,\leq,{\sim},\ra,\ce,0,1)\in \KhIS$ and $\theta$ a
well-behaved congruence of $T$. Then $(T/\theta,\ll,{\sim},\ra,
\ce/\theta, 0/\theta, 1/\theta) \in \KhIS$.
\end{prop}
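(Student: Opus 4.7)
The plan is to verify one by one the axioms required for $(T/\theta, \ll, {\sim}, \ra, \ce/\theta, 0/\theta, 1/\theta)$ to belong to $\KhIS$, drawing on the facts that $\ll$ is already known to be a partial order (Lemma \ref{rc1}) and that the canonical projection $x \mapsto x/\theta$ is monotone (Lemma \ref{rc2}). Because $\Ton$ says $\theta$ respects $\sim$ and $\ra$, these operations and the constants $0,\ce,1$ descend unambiguously to $T/\theta$.

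The key intermediate step would be to show that $x/\theta \vee \ce/\theta$ exists in $(T/\theta, \ll)$ and equals $(x \vee \ce)/\theta$. Upper-boundedness is Lemma \ref{rc2}; for least-upper-boundedness, given $x/\theta \ll z/\theta$ and $\ce/\theta \ll z/\theta$, expanding Definition \ref{wbo} and using $(x \vee \ce) \vee \ce = x \vee \ce$ together with ${\sim}(x \vee \ce) \vee \ce = \ce$ (Lemma \ref{la}) reduces the two clauses needed for $(x \vee \ce)/\theta \ll z/\theta$ to clauses already available. Dualizing via $\sim$ gives $x/\theta \we \ce/\theta = (x \we \ce)/\theta$. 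It is here that $\Ttw$ plays its role, since it is what allows the recovery of equality in the quotient from behaviour of $\vee \ce$ and $\we \ce$.

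With that in hand, the Kleene-poset axioms of Definition \ref{def:Kleeneposet} follow easily: items 2 and 3 transfer directly from $T$; item 4 was just handled; item 6 is a direct translation of Definition \ref{wbo}; and for item 5 I would combine the two clauses witnessing $z/\theta \ll (x \vee \ce)/\theta$ and $z/\theta \ll ({\sim} x \vee \ce)/\theta$ using $\Tth$ inside $\C(T)$, then apply $(x \vee \ce) \we ({\sim} x \vee \ce) = \ce$ to conclude $z/\theta \ll \ce/\theta$. The boundedness condition $\KMtw$ follows again from Lemma \ref{rc2}.

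The main obstacle is $\KMth$, that is, the existence of quotient meets when one side is $\ll$-above $\ce/\theta$. I would first observe that $\ce/\theta \ll x/\theta$ is equivalent to $({\sim} x \vee \ce, \ce) \in \theta$ and hence, via $\Ttw$, to $x \vee \ce \in x/\theta$; so every class above $\ce/\theta$ has a canonical representative $\geq \ce$, for which the $T$-meet with any $y$ exists by $\KMth$ in $T$. Checking that this meet descends to the infimum in $(T/\theta, \ll)$ uses $\KMfo$ in $T$ to rewrite $(x \we y) \vee \ce = (x \vee \ce)\we (y \vee \ce)$, together with the identity ${\sim}(x \we y) \vee \ce = {\sim} y \vee \ce$ (valid because $x \geq \ce$ forces ${\sim} x \leq \ce$), so that the definition of $\ll$ aligns correctly. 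The same canonical-representative technique yields $\KMfo$ in the quotient as a translation of the identity in $T$. Finally, conditions $\Ko$--$\Kfi$ for the descended $\ra$ follow from their analogues in $T$ by projection, once the descriptions of $\vee$ with $\ce$ and of $\we$ in the quotient (available for pairs with one element $\geq \ce$, and dually through $\sim$ for pairs $\leq \ce$ as arise in $\Kfo$) are used together with Lemma \ref{rc2} and $\Ton$.
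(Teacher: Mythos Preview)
Your plan is correct and follows essentially the same route as the paper: a three-stage verification (Kleene poset, then $\KMS$, then the $\Ko$--$\Kfi$ axioms), with the identification $x/\theta \vee \ce/\theta = (x\vee\ce)/\theta$ as the pivotal computation and $\KMfo$ together with the identity ${\sim}((x\vee\ce)\we y)\vee\ce = {\sim} y\vee\ce$ driving the verification of $\KMth$. The one organisational difference is that you reduce to a canonical representative $x\vee\ce$ for classes above $\ce/\theta$, whereas the paper instead proves a separate well-definedness claim that $((x\vee\ce)\we y)/\theta$ depends only on the classes of $x$ and $y$; both amount to the same computations.
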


\begin{proof}
\textsc{Step 1}. $(T/\theta,\ll,{\sim},\ce/\theta) \in \KPo$.

It follows from Lemma \ref{rc1} that $(T/\theta,\ll)$ is a poset.
It is immediate that ${\sim}$ is an involution in $(T/\theta,\ll)$
which is order reversing and that ${\sim}\ce/\theta = \ce/\theta$.
Let $x\in T$. In what follows we will prove that the supremum of
$x/\theta$ and $\ce/\theta$ with respect to the order $\ll$ exists
in $T/\theta$, and we will denote it by $x/\theta \vee
\ce/\theta$. Moreover, we will prove that $x/\theta \vee
\ce/\theta = (x\vee \ce)/\theta$. First note that if $y\in
x/\theta$, then it follows from $\Ttw$ that $(x\vee \ce,y\vee \ce)
\in \theta$, i.e., that $(x\vee \ce)/\theta = (y\vee \ce)/\theta$.
Now we will show that $x/\theta \vee \ce/\theta$ exists. Since
$x\leq x\vee \ce$ and $\ce\leq x \vee \ce$, it follows from Lemma
\ref{rc2} that $x/\theta \ll (x\vee \ce)/\theta$ and $\ce/\theta
\ll (x\vee \ce)/\theta$. Let $z\in T$ be such that $x/\theta \ll
z/\theta$ and $\ce/\theta\ll z/\theta$. Then $((x\vee
\ce)\we(z\vee \ce), x\vee \ce)\in \theta$, $(({\sim}z \vee
\ce)\we({\sim}x \vee \ce), {\sim}z \vee \ce) \in \theta$ and
$(c,{\sim} z \vee \ce)\in \theta$. We need to prove that $(x\vee
\ce)/\theta \ll z/\theta$. By the previous assertions we have in
particular that
\begin{equation} \label{sup1}
(((x\vee \ce)\vee \ce)\we(z\vee \ce), (x\vee \ce) \vee \ce)\in
\theta.
\end{equation}
On the other hand,
\[
({\sim}z \vee \ce)\we ({\sim}(x\vee \ce)\vee \ce) = \ce.
\]
But $(c,{\sim}z \vee \ce) \in \theta$, so
\begin{equation} \label{sup2}
(({\sim}z \vee \ce)\we ({\sim}(x\vee \ce)\vee \ce), {\sim}z \vee
\ce) \in \theta.
\end{equation}
Hence, it follows from (\ref{sup1}) and (\ref{sup2}) that $(x\vee
\ce)/\theta \ll z/\theta$. Thus, $x/\theta \vee \ce/\theta$ exists
and $x/\theta \vee \ce/\theta = (x\vee \ce)/\theta$.

In what follows we will prove that for every $x,y \in T$,
\[
(x/\theta\vee \ce/\theta)\we ({\sim}x/\theta \vee \ce/\theta) =
\ce/\theta,
\]
or, equivalently, that
\begin{equation} \label{center}
(x\vee \ce)/\theta \we ({\sim} x \vee \ce)/\theta = \ce/\theta,
\end{equation}
where we also use $\we$ for the infimum with respect to $\ll$.
In order to prove (\ref{center}), note that it follows from Lemma
\ref{rc2} that $c/\theta \ll (x\vee \ce)/\theta$ and $\ce/\theta
\ll ({\sim} x \vee \ce)/\theta$. Let $z\in T$ such that $z/\theta
\ll (x\vee \ce)/\theta$ and $z/\theta \ll ({\sim}x \vee
\ce)/\theta$. In particular,
\begin{equation} \label{sup3}
((z\vee \ce) \we (x\vee \ce), z \vee \ce) \in \theta,
\end{equation}
\begin{equation} \label{sup4}
((z\vee \ce) \we ({\sim} x \vee \ce), z\vee \ce) \in \theta.
\end{equation}
It follows from $\Tth$, (\ref{sup3}) and (\ref{sup4}) that
\begin{equation}
((z\vee \ce) \we (x\vee \ce) \we ({\sim} x \vee \ce) , z \vee \ce) \in \theta.
\end{equation}
Since $(x\vee \ce) \we ({\sim} x \vee \ce) = \ce$, then $(\ce,
z\vee \ce) \in \theta$, i.e., $z/\theta \ll \ce/\theta$.
Therefore, $(x/\theta\vee \ce/\theta)\we ({\sim}x/\theta \vee
\ce/\theta) = \ce/\theta$.

For $x,y\in T$ assume that $(x\vee \ce)/\theta \ll (y\vee \ce)/\theta$
and $(x\we \ce)/ \theta \ll (y\we \ce)/\theta$. It is immediate that
$x/\theta \ll y/\theta$. Then we conclude that
$(T/\theta,\ll,{\sim},\ce/\theta) \in \KPo$.

\textsc{Step 2}. $(T/\theta,\ll,{\sim},\ce/\theta,0/\theta,
1/\theta) \in \KMS$.

Since for every $x\in T$ we have  $0\leq x \leq 1$,  it
follows from Lemma \ref{rc2} that $0/\theta \ll x/\theta \ll
1/\theta$, i.e., $0/\theta$ is the first element of
$(T/\theta,\ll)$ and $1/\theta$ is the last element of
$(T/\theta,\ll)$.

Let $x$ and $y$ be elements of $T$. Recall that it follows from
$\KMth$ that $(x\vee \ce) \we y$ exists. We will prove that $(x
\vee \ce)/\theta \we y/\theta$ exists and is $((x\vee \ce)\we
y)/\theta$. In order to do it,  we will prove first that if $(x,z)
\in \theta$ and $(y,w) \in \theta$, then $((x\vee \ce)\we
y)/\theta = ((z\vee \ce)\we w)/\theta$.

Let $(x,z) \in \theta$ and $(y,w) \in \theta$. It follows
from $\Ttw$ that $(x\vee \ce,z\vee \ce) \in \theta$ and $(y\vee
\ce,w\vee \ce) \in \theta$. By $\Tth$ we have that
\begin{equation} \label{EQ5}
((x\vee \ce) \we (y\vee \ce),(z\vee \ce) \we (w\vee \ce))\in
\theta.
\end{equation}
Taking into account $\KMfo$ we also have
\begin{equation} \label{EQ6}
((x\vee \ce)\we y) \vee \ce = (x\vee \ce)\we (y\vee \ce),
\end{equation}
\begin{equation} \label{EQ7}
((z\vee \ce)\we w) \vee \ce = (z\vee \ce)\we (w\vee \ce).
\end{equation}
Hence, it follows from (\ref{EQ5}), (\ref{EQ6}) and (\ref{EQ7})
that
\begin{equation} \label{EQ7-1}
(((x\vee \ce)\we y) \vee \ce, ((z\vee \ce)\we w) \vee \ce)\in
\theta.
\end{equation}
In a similar way, taking into account that $({\sim} x\vee
\ce,{\sim} z\vee \ce) \in \theta$ and $({\sim} y\vee \ce,{\sim}
w\vee \ce) \in \theta$ we have
\begin{equation} \label{EQ7-2}
((({\sim} x\vee \ce)\we {\sim} y) \vee \ce, (({\sim} z\vee \ce)\we
{\sim} w) \vee \ce)\in \theta.
\end{equation}
Then it follows from (\ref{EQ7-1}), (\ref{EQ7-2}) and $\Ttw$ that
\[
((x\vee \ce)\we y, (z\vee \ce)\we w)\in \theta.
\]

Now we will prove that $(x/\theta \vee \ce/\theta) \we y/\theta$
exists and is $((x\vee \ce)\we y)/\theta$. This is equivalent to
prove that $(x \vee \ce)/\theta \we y/\theta$ exists and is
$((x\vee \ce)\we y)/\theta$. Since $(x\vee \ce)\we y \leq x\vee
\ce$ and $(x\vee \ce)\we y \leq y$, then it follows from Lemma
\ref{rc2} that $((x\vee \ce)\we y)/\theta \ll (x\vee \ce)/\theta$
and $((x\vee \ce)\we y)/\theta \ll y/\theta$. Let $z\in T$ be such
that $z/\theta \ll (x\vee \ce)/\theta$ and $z/\theta \ll
y/\theta$. In particular,
\begin{equation} \label{EQ8}
((z\vee \ce)\we (x\vee \ce), z\vee \ce)\in \theta,
\end{equation}
\begin{equation} \label{EQ9}
((z\vee \ce)\we (y\vee \ce), z\vee \ce)\in \theta,
\end{equation}
\begin{equation} \label{EQ10}
(({\sim} y\vee \ce)\we ({\sim} z\vee \ce), {\sim} y\vee \ce)\in
\theta.
\end{equation}
We need to prove that $z/\theta \ll ((x\vee \ce)\we y)/\theta$,
which means that
\begin{equation} \label{EQ11}
((z\vee \ce)\we (((x\vee \ce)\we y) \vee \ce), z\vee \ce) \in
\theta
\end{equation}
and
\begin{equation} \label{EQ12}
({\sim}((x\vee \ce)\we y) \vee \ce) \we ({\sim z} \vee \ce),
{\sim}((x\vee \ce)\we y) \vee \ce) \in \theta.
\end{equation}
It follows from $\KMfo$ that
\begin{equation} \label{EQ12-1}
(z\vee \ce)\we (((x\vee \ce)\we y) \vee \ce) = (z\vee \ce)\we
(y\vee \ce)\we (x\vee \ce),
\end{equation}
and it follows from (\ref{EQ8}) and $\Tth$ that
\begin{equation} \label{EQ12-2}
((z\vee \ce)\we (y\vee \ce)\we (x\vee \ce), (z\vee \ce)\we (y\vee
\ce))\in \theta.
\end{equation}
Thus, by (\ref{EQ9}), (\ref{EQ12-1}), and (\ref{EQ12-2}) we obtain
(\ref{EQ11}). It is immediate that the condition (\ref{EQ12}) is
equal to the condition (\ref{EQ10}) because
\[
{\sim}((x\vee \ce)\we y) \vee \ce) \we ({\sim z} \vee \ce) =
({\sim} y\vee \ce)\we ({\sim} z\vee \ce),
\]
\[
{\sim}((x\vee \ce)\we y) \vee \ce = {\sim} y\vee \ce.
\]
Then $(T/\theta,\ll,{\sim},\ce/\theta,0/\theta, 1/\theta)$
satisfies $\KMth$. The condition $\KMfo$ follows from the previous
steps and from the same condition on $T$. In consequence, we
obtain that $(T/\theta,\ll,{\sim},\ce/\theta,0/\theta,
1/\theta)\in \KMS$.

\textsc{Step 3}. $(T/\theta,\ll,{\sim},\ra,\ce/\theta,0/\theta,
1/\theta) \in \KhIS$. The other conditions to be an object of
$\KhIS$ follow from the previous steps, the fact that $T\in \KhIS$
and Lemma \ref{rc2}.
\end{proof}

In what follows we will study the lattice of well-behaved
congruences of any object of $\KhIS$. We start with some
preliminary definitions. Let $T\in \KhIS$. Recall that it follows
from previous results of this paper that $\C(T) \in \hIS$. Note
that $T$ does not necessarily satisfy the condition ($\CK$). We
write $\Conwb(T)$ to refer both to the set and to the lattice of
well-behaved congruences of $T$. For $\theta \in \Conwb(T)$ we
define the binary relation $\Gamma(\theta)$ on $\C(T)$ as the
restriction of $\theta$ to $\C(T) \times \C(T)$. For $\tau \in
\Con(\C(T))$ we define the relation $\Sigma{(\tau)} \subseteq T
\times T$ in the following way:
\[
\text{$(x,y) \in \Sigma(\tau)$ if and only if $(x\vee \ce,y\vee
\ce) \in \tau$ and $({\sim} x\vee \ce,{\sim} y\vee \ce) \in
\tau$.}
\]
We prove that $\Sigma(\tau)$ is a well behaved congruence of
$T$.

\begin{lem}\label{lem:Sigma-tau}
Let $T\in \KhIS$ and $\tau \in \Con(\C(T))$. Then $\Sigma(\tau)
\in \Conwb(T)$.
\end{lem}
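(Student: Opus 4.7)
The plan is to verify the three defining conditions $\Ton$, $\Ttw$, $\Tth$ of Definition \ref{defwbc} for $\Sigma(\tau)$. That $\Sigma(\tau)$ is an equivalence relation is immediate from the fact that $\tau$ is one on $\C(T)$ and from the form of the definition (pull-back along the pair of maps $x \mapsto x\vee \ce$ and $x \mapsto {\sim} x \vee \ce$, both of which land in $\C(T)$). The governing idea throughout is that, using the axioms of $\KhIS$, every relevant expression of the form ``$(\cdot) \vee \ce$'' can be rewritten as a combination of $\we$ and $\ra$ applied to elements of $\C(T)$, so congruence under $\tau$ transports to congruence under $\Sigma(\tau)$.

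For $\Tth$, I would simply note that if $x,y,z,w \geq \ce$, then $x\vee \ce = x$, $y\vee \ce = y$, etc., while ${\sim} x \vee \ce = \ce$ because ${\sim} x \leq \ce$. Hence $(x,y)\in \Sigma(\tau)$ collapses to $(x,y)\in \tau$, so $(x\we z, y\we w)\in \tau$ follows since $\tau\in \Con(\C(T))$ and $\C(T)$ is a semilattice. Finally $(x\we z)\vee \ce = x\we z$ and ${\sim}(x\we z)\vee \ce = \ce$ (by Lemma \ref{la} together with ${\sim} x, {\sim} z \leq \ce$), which gives $(x\we z, y\we w)\in \Sigma(\tau)$.

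For $\Ttw$, the right-hand side of the biconditional unfolds trivially: applying Lemma \ref{la}, ${\sim}(x\vee \ce) = {\sim} x \we \ce \leq \ce$, so ${\sim}(x\vee \ce)\vee \ce = \ce$ and the second coordinate of the condition $(x\vee \ce, y\vee \ce)\in \Sigma(\tau)$ is automatic. Hence $(x\vee \ce, y\vee \ce)\in \Sigma(\tau)$ iff $(x\vee \ce, y\vee \ce)\in \tau$, and analogously for $({\sim} x\vee \ce, {\sim} y\vee \ce)$. These two conditions together are exactly the definition of $(x,y)\in \Sigma(\tau)$.

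The main work is $\Ton$, i.e.\ compatibility with ${\sim}$ and $\ra$. Compatibility with ${\sim}$ is immediate from the symmetry in the definition of $\Sigma(\tau)$ and the involutivity of ${\sim}$. For $\ra$, suppose $(x,y), (z,w)\in \Sigma(\tau)$; I must show $((x\ra z)\vee \ce, (y\ra w)\vee \ce)\in \tau$ and $({\sim}(x\ra z)\vee \ce, {\sim}(y\ra w)\vee \ce)\in \tau$. The key is to use $\Kfi$ with ${\sim} z$ in place of $y$ to obtain
\[
(x\ra z)\vee \ce \;=\; ((x\vee \ce)\ra (z\vee \ce))\we (({\sim} z\vee \ce)\ra ({\sim} x\vee \ce)),
\]
and to use $\Kfo$ (then apply ${\sim}$ and Lemma \ref{la}) to obtain
\[
{\sim}(x\ra z)\vee \ce \;=\; (x\vee \ce)\we ({\sim} z\vee \ce).
\]
Both right-hand sides are built from $\we$ and $\ra$ applied to elements of $\C(T)$, all of whose $\tau$-classes are controlled by $(x,y),(z,w)\in \Sigma(\tau)$, so the required relations follow at once from $\tau\in \Con(\C(T))$ (using that $\C(T)\in \hIS$, as established in Proposition \ref{p2}). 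The main obstacle is keeping the two rewritings above clean: once they are in place, the rest is a routine application of the congruence property of $\tau$.
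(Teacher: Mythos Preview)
Your proposal is correct and follows essentially the same approach as the paper's proof: verify $\Ttw$ and $\Tth$ by unwinding the definition on elements of $\C(T)$, note that compatibility with ${\sim}$ is immediate by symmetry, and for compatibility with $\ra$ use $\Kfi$ (with the substitution $y:={\sim} z$) and the involution of $\Kfo$ to rewrite $(x\ra z)\vee\ce$ and ${\sim}(x\ra z)\vee\ce$ as $\we,\ra$-terms over $\C(T)$, then conclude via $\tau\in\Con(\C(T))$. The only cosmetic difference is the order of presentation (the paper does $\Ttw$, then $\Tth$, then $\Ton$).
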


\begin{proof}
Let $\tau \in \Con(\C(T))$. Straightforward computations show
that $\Sigma(\tau)$ satisfies $\Ttw$. In order to show that
$\Sigma(\tau)$ satisfies $\Tth$, let $x$, $y$, $z$ and $w$ in
$\C(T)$ be such that $(x,y) \in \Sigma(\tau)$ and $(z,w) \in
\Sigma(\tau)$, which means that $(x,y) \in \tau$ and $(z,w) \in
\tau$. Then $(x\we z,y\we w)\in \tau$, because $\tau \in
\Con(\C(T))$. But $(x\we z) \vee \ce = x\we z$ and $(y\we w) \vee
\ce = y\we w$. Thus,
\[
((x\we y) \vee \ce, (z\we w)\vee \ce) \in \tau.
\]
On the other hand, since ${\sim} (x\we z) \vee \ce = \ce$ and
${\sim} (y\we w) \vee \ce = \ce$, then
\[
({\sim} (x\we z) \vee \ce, {\sim} (y\we w) \vee \ce) \in \tau.
\]
Hence, $(x\we z, z\we w) \in \Sigma(\tau)$, so the condition
$\Tth$ holds. Now we show the condition $\Ton$. It is immediate
that $\Sigma(\tau)$ is congruence with respect to ${\sim}$.

In order to prove that $\Sigma(\tau)$ is congruence with
respect to $\ra$, let $(x,y) \in \Sigma(\tau)$ and $(z,w) \in
\Sigma(\tau)$, so
\begin{equation} \label{eq1}
(x\vee \ce, y\vee \ce) \in \tau,
\end{equation}
\begin{equation} \label{eq2}
(z\vee \ce, w\vee \ce) \in \tau,
\end{equation}
\begin{equation} \label{eq3}
({\sim} x\vee \ce, {\sim} y\vee \ce) \in \tau,
\end{equation}
\begin{equation} \label{eq4}
({\sim} z\vee \ce, {\sim} w \vee \ce) \in \tau.
\end{equation}
Then taking $\we$ in (\ref{eq1}) and (\ref{eq4}) we have
\begin{equation} \label{eq5}
((x\vee \ce) \we ({\sim} z \vee \ce), (y\vee \ce)\we ({\sim} w
\vee \ce)) \in \tau.
\end{equation}
But it follows from $\Kfo$ that ${\sim}(x\ra z) \vee \ce = (x\vee
\ce) \we ({\sim} z \vee \ce)$ and ${\sim}(y \ra w) \vee \ce =
(y\vee \ce)\we ({\sim} w \vee \ce)$. So by (\ref{eq5}) we obtain
that
\begin{equation}\label{eq6}
({\sim}(x\ra z) \vee \ce, {\sim}(y\ra w) \vee \ce) \in \tau.
\end{equation}
On the other hand, taking $\ra$ between (\ref{eq1}) and
(\ref{eq2}) we have that
\begin{equation}\label{eq7}
((x\vee \ce) \ra (z\vee \ce), (y\vee \ce) \ra (w\vee \ce))\in
\tau,
\end{equation}
and taking $\ra$ between (\ref{eq4}) and (\ref{eq3}) we obtain
\begin{equation}\label{eq8}
(({\sim} z\vee \ce) \ra ({\sim} x\vee \ce), ({\sim} w\vee \ce) \ra
({\sim} y\vee \ce))\in \tau.
\end{equation}
Define now the following elements:
\[
t := ((x\vee \ce)\ra (z\vee \ce))\we (({\sim} z\vee \ce) \ra
({\sim} x \vee \ce)),
\]
\[
u := ((y\vee \ce)\ra (w\vee \ce))\we (({\sim} w\vee \ce)\ra ({\sim}
y \vee \ce)).
\]
Taking $\we$ in (\ref{eq7}) and (\ref{eq8}) we obtain that
\begin{equation}\label{eq9}
(t,u)\in \tau.
\end{equation}
Besides, it follows from $\Kfi$ that
\begin{equation} \label{eq10}
(x\ra z) \vee \ce = ((x\vee \ce) \ra (z\vee \ce)) \we (({\sim} z
\vee \ce) \ra ({\sim} x \vee \ce)),
\end{equation}
\begin{equation}\label{eq11}
(y\ra w) \vee z = ((y\vee \ce)\ra (w\vee \ce)) \we (({\sim} w \vee
\ce)\ra ({\sim} y \vee \ce)).
\end{equation}
Taking into account (\ref{eq9}), (\ref{eq10}), and (\ref{eq11}) we
have
\begin{equation} \label{eq12}
((x\ra z)\vee \ce, (y\ra w)\vee \ce) \in \tau.
\end{equation}
Thus, by (\ref{eq6}) and (\ref{eq12}) the condition $(x\ra z, y\ra
w) \in \Sigma(\tau)$ is satisfied. This implies that $\Sigma(\tau)
\in \Conwb(T)$.
\end{proof}

\begin{prop} \label{lemc1}
Let $T\in \KhIS$. There exists an isomorphism between $\Conwb(T)$
and $\Con(\C(T))$, which is established via the assignments
$\theta \mapsto \Gamma(\theta)$ and $\tau \mapsto \Sigma(\tau)$.
\end{prop}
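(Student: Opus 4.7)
The plan is to show that $\Gamma$ and $\Sigma$ are mutually inverse order-preserving bijections between the posets $\Conwb(T)$ and $\Con(\C(T))$, each of which is a complete lattice under inclusion. Lemma \ref{lem:Sigma-tau} already delivers one direction of well-definedness, so the first task is the complementary one: verify that for $\theta \in \Conwb(T)$ the restriction $\Gamma(\theta) := \theta \cap (\C(T)\times \C(T))$ lies in $\Con(\C(T))$. Equivalence and compatibility with $\we$ are exactly the content of $\Tth$, while compatibility with $\ra$ follows from $\Ton$ (which asserts that $\theta$ is a congruence of the reduct $(T,\ra,{\sim})$), once one notes that for $x,y \geq \ce$ we have $x\ra y \geq \ce$ by $\Ko$, so that the restriction is closed under $\ra$ in $\C(T)$.

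The next step is to prove $\Gamma(\Sigma(\tau)) = \tau$ for every $\tau \in \Con(\C(T))$. For $x,y \in \C(T)$ we have $x\vee \ce = x$, $y\vee \ce = y$, and ${\sim} x, {\sim} y \leq \ce$, so ${\sim} x \vee \ce = {\sim} y \vee \ce = \ce$. Hence the defining clause of $\Sigma(\tau)$ collapses to $(x,y) \in \tau$ together with the trivially true $(\ce,\ce)\in\tau$, giving the desired equality. Dually, for $\theta \in \Conwb(T)$ the identity $\Sigma(\Gamma(\theta)) = \theta$ is exactly condition $\Ttw$ applied to $\theta$: by definition $(x,y)\in \Sigma(\Gamma(\theta))$ iff $(x\vee \ce, y\vee \ce)$ and $({\sim} x\vee \ce, {\sim} y\vee \ce)$ both lie in $\theta$ (since all four elements belong to $\C(T)$), and $\Ttw$ asserts this is equivalent to $(x,y)\in\theta$.

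It remains to record that both assignments are inclusion-preserving, which is immediate from their definitions, so that the bijection is in fact a lattice isomorphism. There is no significant obstacle here: the genuinely technical verification, namely that $\Sigma(\tau)$ preserves the implication, has been settled in Lemma \ref{lem:Sigma-tau} via $\Kfo$ and $\Kfi$. The only subtlety worth stressing is the use of $\Ko$ in step one to ensure $\C(T)$ is closed under $\ra$, and the systematic invocation of $\Ttw$ in step three to turn a condition about pairs of elements $\geq \ce$ into a condition about arbitrary pairs of $T$.
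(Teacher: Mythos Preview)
Your proof is correct and follows essentially the same approach as the paper's. The only organizational difference is that the paper establishes injectivity of $\Gamma$ and then $\Gamma\circ\Sigma = \mathrm{id}$, whereas you verify both $\Gamma\circ\Sigma = \mathrm{id}$ and $\Sigma\circ\Gamma = \mathrm{id}$ directly; the underlying use of $\Ton$, $\Ttw$, $\Tth$, and Lemma~\ref{lem:Sigma-tau} is identical.
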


\begin{proof}
Let $\theta\in \Conwb(T)$. It follows from $\Ton$ and $\Tth$ that
$\Gamma(\theta) \in \Con (\C(T))$. Suppose now that $\theta\in
\Conwb(T)$, $\sigma \in \Conwb(T)$ and $\Gamma(\theta) =
\Gamma(\sigma)$. Let $(x,y) \in \theta$. Then by $\Ttw$ we have
$(x\vee \ce, y\vee \ce) \in \theta$ and $({\sim} x \vee \ce,
{\sim} y \vee \ce) \in \theta$, so $(x\vee \ce, y\vee \ce) \in
\Gamma(\theta)$ and $({\sim}x \vee \ce, {\sim}y \vee \ce) \in
\Gamma(\theta)$. Since $\Gamma(\theta) = \Gamma(\sigma)$, $(x\vee
\ce, y\vee \ce) \in \sigma$ and $({\sim}x \vee \ce, {\sim} y \vee
\ce) \in \sigma$. Hence, it follows from $\Ttw$ again that $(x,y)
\in \sigma$. Thus, $\theta \subseteq \sigma$. For the same reason
we have the other inclusion, so $\theta = \sigma$.

Lemma~\ref{lem:Sigma-tau} shows that if $\tau \in \Con(\C(T))$,
then $\Sigma(\tau)\in \Conwb(T)$. Besides it is immediate that
$\Gamma(\Sigma(\tau)) = \tau$. We also have that for $\theta\in
\Conwb(T)$ and $\sigma \in \Conwb(T)$, $\theta \subseteq \sigma$
if and only if $\Sigma(\theta) \subseteq \Sigma (\sigma)$.
Therefore, we obtain an isomorphism between $\Conwb(T)$ and
$\Con(\C(T))$.
\end{proof}

Let $T\in \KhBDL$. If $\theta \in \Con(T)$ and $\tau \in \C(T)$,
we define $\Gamma(\theta)$ and $\Sigma(\tau)$ as for the case of
$\KhIS$. If $\theta \in \Con(T)$, then $\theta$ satisfies $\Ton$,
$\Ttw$, and $\Tth$. Let $\tau \in \C(T)$. The distributivity of the
underlying lattice of $T$ proves that $\Sigma(\tau)$ preserves
$\we$ and $\vee$. Then  from the proof of Proposition
\ref{lemc1} the next result follows.

\begin{prop} \label{lemc1b}
Let $T\in \KhBDL$. There exists an isomorphism between $\Con(T)$
and $\Con(\C(T))$, which is established via the assignments
$\theta \mapsto \Gamma(\theta)$ and $\tau \mapsto \Sigma(\tau)$.
\end{prop}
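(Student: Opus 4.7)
The plan is to reduce Proposition~\ref{lemc1b} to Proposition~\ref{lemc1} by showing two things: first, that in the richer setting of $\KhBDL$ every algebraic congruence of $T$ is automatically a well-behaved congruence, so that $\Con(T)=\Conwb(T)$; and second, that under the bijection $\theta\mapsto \Gamma(\theta)$, $\tau\mapsto \Sigma(\tau)$ already established in Proposition~\ref{lemc1}, the lattice $\Con(T)$ corresponds exactly to the congruences of $\C(T)$ viewed as an object of $\hBDL$ (that is, those that additionally preserve $\vee$). Both directions of the map preserve inclusion, so this upgrade immediately yields the desired isomorphism.

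First I would verify that $\Con(T)\subseteq \Conwb(T)$. For any $\theta\in \Con(T)$, conditions $\Ton$ and $\Tth$ are immediate since $\theta$ is by definition a congruence for $\ra$, ${\sim}$ and (globally) $\we$. For $\Ttw$ the nontrivial direction is the "if" part: assume $(x\vee\ce,y\vee\ce)\in\theta$ and $({\sim}x\vee\ce,{\sim}y\vee\ce)\in\theta$. Passing to the quotient $T/\theta$, which still belongs to $\KhBDL$ and whose reduct is a Kleene poset, the second hypothesis together with ${\sim}$ and De Morgan in $T/\theta$ gives $[x]\we[\ce]=[y]\we[\ce]$, while the first gives $[x]\vee[\ce]=[y]\vee[\ce]$; condition~6 of Definition~\ref{def:Kleeneposet} then forces $[x]=[y]$, i.e.\ $(x,y)\in\theta$.

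Next I would note that $\C(T)\in\hBDL$, since its distributive-lattice reduct is the one inherited from the centered Kleene reduct of $T$ (see Section~\ref{Kc}) and its hemi-implicative semilattice structure is given by Proposition~\ref{p2}. For $\theta\in\Con(T)$ the restriction $\Gamma(\theta)$ obviously preserves $\we$, $\vee$ and $\ra$ on $\C(T)$, so it is a genuine $\hBDL$-congruence. Conversely, for $\tau\in\Con(\C(T))$, Proposition~\ref{lemc1} already gives $\Sigma(\tau)\in\Conwb(T)$, so it is a congruence for ${\sim}$ and $\ra$; what still needs to be checked is that $\Sigma(\tau)$ preserves $\we$ and $\vee$ on the whole of $T$. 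This is the step where distributivity enters: from $(x\vee z)\vee\ce=(x\vee\ce)\vee(z\vee\ce)$ and, by De Morgan and distributivity, ${\sim}(x\vee z)\vee\ce=({\sim}x\vee\ce)\we({\sim}z\vee\ce)$, the check that $(x\vee z,y\vee w)\in\Sigma(\tau)$ whenever $(x,y),(z,w)\in\Sigma(\tau)$ reduces to the fact that $\tau$ preserves $\vee$ and $\we$ on $\C(T)$; the verification for $\we$ is symmetric.

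I expect the only genuine obstacle to be the verification of $\Ttw$ for arbitrary $\theta\in\Con(T)$, which is why one needs to invoke condition~6 of the Kleene-poset axioms on the quotient. Once this is done, the mutually inverse order-preserving assignments $\Gamma$ and $\Sigma$ of Proposition~\ref{lemc1} restrict to a lattice isomorphism $\Con(T)\cong \Con(\C(T))$, proving the proposition.
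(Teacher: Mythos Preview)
Your proposal is correct and follows essentially the same approach as the paper, which also argues that every $\theta\in\Con(T)$ satisfies $\Ton$, $\Ttw$, $\Tth$, that $\Sigma(\tau)$ preserves $\we$ and $\vee$ by distributivity, and then appeals to the proof of Proposition~\ref{lemc1}. One small remark: in your plan you announce $\Con(T)=\Conwb(T)$, but you only verify (and only need) the inclusion $\Con(T)\subseteq\Conwb(T)$; the reverse inclusion is established separately in the paper (after Corollary~\ref{propc2b}) via Theorem~\ref{teofc}, and is not required for the argument here.
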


Let $H \in \hIS$ or $H \in \hBDL$. Let $\theta \in \Con(H)$ and
$\tau \in \Con(\C(\K(H)))$. Since the map $\alpha:H\ra \C(K(H))$
given by $\alpha(a) = (a,0)$ is an isomorphism, we have that the
binary relation $\alpha(\theta) = \{(\alpha(a),\alpha(b)): (a,b)
\in \theta\}$ in $\C(\K(H))$
% given by
%$(\alpha(a),\alpha(b)) \in \alpha(\theta)$ if and only if $(a,b)
%\in \theta$
is a congruence of $\C(\K(H))$. Moreover, the relation
$\alpha^{-1}(\tau)$ in $H$ given by $(a,b) \in \alpha^{-1}(\tau)$
if and only if $((a,0), (b,0)) \in \tau$ is a congruence of $H$.
Then the following result follows from propositions \ref{lemc1}
and \ref{lemc1b}.

\begin{cor}
\begin{enumerate} [\normalfont (a)]
\item[]
\item Let $H\in \hIS$. There exists an isomorphism between
$\Con(H)$ and $\Conwb(\K(H))$, which is established via the
assignments $\theta \mapsto \Sigma(\alpha(\theta))$ and $\tau
\mapsto \alpha^{-1}(\Gamma(\tau))$.
\item Let $H\in \hBDL$. There
exists an isomorphism between $\Con(H)$ and $\Con(\K(H))$, which
is established via the assignments $\theta \mapsto
\Sigma(\alpha(\theta))$ and $\tau \mapsto
\alpha^{-1}(\Gamma(\tau))$.
\end{enumerate}
\end{cor}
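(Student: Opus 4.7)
The plan is to obtain the desired isomorphisms by composing two known isomorphisms in each case. For part (a), Proposition \ref{lemc1} applied to $T = \K(H) \in \KhIS$ yields an isomorphism
\[
\Conwb(\K(H)) \;\cong\; \Con(\C(\K(H)))
\]
via the assignments $\theta \mapsto \Gamma(\theta)$ and $\tau \mapsto \Sigma(\tau)$. On the other hand, since $\alpha_H : H \to \C(\K(H))$ is an isomorphism in $\hIS$ (see Remark \ref{a}), the map $\theta \mapsto \alpha_H(\theta)$ is a lattice isomorphism between $\Con(H)$ and $\Con(\C(\K(H)))$ with inverse $\tau \mapsto \alpha_H^{-1}(\tau)$. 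Composing these two isomorphisms yields the bijection between $\Con(H)$ and $\Conwb(\K(H))$ given explicitly by $\theta \mapsto \Sigma(\alpha_H(\theta))$ and $\tau \mapsto \alpha_H^{-1}(\Gamma(\tau))$, as claimed.

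For part (b), the argument is identical but uses Proposition \ref{lemc1b} in place of Proposition \ref{lemc1}, together with the fact (also recorded in Remark \ref{a} and extended componentwise to the lattice operations) that $\alpha_H : H \to \C(\K(H))$ is an isomorphism in $\hBDL$ whenever $H \in \hBDL$. Composing the isomorphism $\Con(\K(H)) \cong \Con(\C(\K(H)))$ of Proposition \ref{lemc1b} with the congruence-lattice isomorphism induced by $\alpha_H$ produces the required bijection.

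In both cases the only routine verification needed is that $\theta \mapsto \alpha_H(\theta)$ preserves order (and similarly its inverse), which is immediate from $\alpha_H$ being an algebra isomorphism: a binary relation on $H$ is a congruence if and only if its image under $\alpha_H$ is a congruence on $\C(\K(H))$, and inclusion is preserved in both directions. I do not foresee any real obstacle: everything reduces to the already-proven propositions \ref{lemc1} and \ref{lemc1b} together with the standard fact that an algebra isomorphism transports the congruence lattice isomorphically. The only point worth being careful about is to write the explicit formulas for the composed maps in the order prescribed by the statement, namely $\theta \mapsto \Sigma(\alpha_H(\theta))$ in one direction and $\tau \mapsto \alpha_H^{-1}(\Gamma(\tau))$ in the other, and to verify they are mutually inverse, which follows from $\Gamma \circ \Sigma = \mathrm{id}$ and $\Sigma \circ \Gamma = \mathrm{id}$ (established in the proofs of propositions \ref{lemc1} and \ref{lemc1b}) together with $\alpha_H^{-1} \circ \alpha_H = \mathrm{id}$.
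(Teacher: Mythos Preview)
Your proof is correct and follows essentially the same approach as the paper: the paper notes just before the corollary that $\alpha_H$ is an isomorphism (hence transports congruences between $H$ and $\C(\K(H))$) and then states that the result follows from Propositions \ref{lemc1} and \ref{lemc1b}, which is precisely the composition you carry out. Your write-up is more detailed than the paper's one-line justification, but the underlying argument is identical.
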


\begin{rem}
Let $H\in \hIS$, $\theta \in \Con(H)$ and $\tau \in
\Conwb(\C(\K(H)))$. Then
\[
((a,b),(d,e)) \in \Sigma(\alpha(\theta))\; \textrm{if and only if}
\; (a,d) \in \theta\; \textrm{and}\; (b,e) \in \theta,
\]
\[
(a,b) \in \alpha^{-1}(\Gamma(\tau))\; \textrm{if and only if}\;
((a,0),(b,0)) \in \tau.
\]
Similarly for $H\in \hBDL$.
\end{rem}

\subsection{Well-behaved congruences and congruences: the relation
with some family of filters and some applications}

We start by recalling some facts about congruences in $\hIS$ and
congruences in $\hBDL$ \cite{SM2}. Let $H\in \hIS$ or $H\in
\hBDL$. As usual, we say that $F$ is a \emph{filter} if it is a
nonempty subset of $H$ which satisfies the following conditions:
\begin{enumerate}
 \item If $a\in F$ and $b\in F$ then $a\we b \in
F$. \item If $a\in F$ and $a\leq b$ then $b\in F$.
\end{enumerate}
We also consider the binary relation associated with $F \subseteq H$
\[
\Theta(F) = \{(a,b) \in H\times H: a\we f = b \we f\;\text{for
some}\; f\in F\}.
\]
Note that if $H$ is an upper bounded semilattice and $F$ is a
filter, then $\Theta(F)$ is a congruence.
%We say that it is the congruence associated with the filter $F$.
Let $H\in \hIS$ or $H\in \hBDL$. For $a,b,f\in H$ we
define the following element of $H$:
$$t(a,b,f): = (a \ra b) \rla ((a\we f) \ra (b\we f)),$$ where $a\rla
b:= (a\ra b)\we (b\ra a)$.

The next definition was introduced in \cite{SM2}.

\begin{defn}
Let $H\in \hIS$ or $H\in \hBDL$, and let $F$ be a filter of $H$.
We say that $F$ is a \emph{congruent filter} if $t(a,b,f)\in F$
whenever $a$, $b \in H$ and $f\in F$.
\end{defn}

Note that the set of all congruent filters of $H\in \hIS$ or of $H\in
\hBDL$ is closed under arbitrary intersections and therefore for
every $X \subseteq H$ the congruent filter generated by $X$
exists.

\begin{rem}
Let $F$ be a congruent filter of a hemi-implicative semilattice
(lattice). We will see that $(a,b) \in \Theta(F)$ if and only if
$a\rla b \in F$. In order to show it, suppose that $a\rla b \in
F$. Since $a\we (a\rla b) = b \we(b\rla a)$, then $(a,b) \in
\Theta(F)$. Conversely, assume that $(a,b) \in \Theta(F)$, i.e.,
$a\we f = b\we f$ for some $f\in F$. Since $t(a,b,f) \in F$ and
$t(a,b,f) = (a\ra b) \rla 1$, then $1\ra (a\ra b) \in F$ because
$(a\ra b) \rla 1 \leq 1\ra (a\ra b)$. Since $1\ra (a\ra b) \leq
a\ra b$, then $a\ra b\in F$. In a similar way we can show that
$b\ra a\in F$. Hence, $a\rla b\in F$. Thus,
\[
\Theta(F) = \{(a,b)\in H\times H: a\rla b \in F\}.
\]
\end{rem}

The following result was proved in \cite{SM2}.

\begin{thm} \label{teofc}
Let $H\in \hIS$ or $H\in \hBDL$. There exists an isomorphism
between $\Con(H)$ and the lattice of congruent filters of $H$,
which is established via the assignments $\theta \mapsto 1/\theta$
and $F\mapsto \Theta(F)$.
\end{thm}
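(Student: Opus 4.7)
The plan is to establish the isomorphism $\theta \mapsto 1/\theta$, $F \mapsto \Theta(F)$ between $\Con(H)$ and the lattice of congruent filters of $H$ in three substantive stages: well-definedness of both assignments, mutual inversion, and order-preservation.

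First I would verify that $\theta \mapsto 1/\theta$ lands in the congruent filters. That $1/\theta$ is a filter is a routine consequence of $\theta$ being a congruence: closure under $\we$ is immediate, and upward-closure follows from the rewriting $b = b \we 1 \equiv b \we a = a \equiv 1$ modulo $\theta$ whenever $a \in 1/\theta$ and $a \leq b$. For congruency, if $f \in 1/\theta$ then $a \we f \equiv a$ and $b \we f \equiv b$ modulo $\theta$, so $(a \we f) \ra (b \we f) \equiv a \ra b$, and thus $t(a,b,f) \equiv (a \ra b) \rla (a \ra b) = 1$, placing $t(a,b,f)$ in $1/\theta$.

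Second I would check that $F \mapsto \Theta(F)$ lands in $\Con(H)$. That $\Theta(F)$ is an equivalence relation compatible with $\we$ (and with $\vee$ when $H \in \hBDL$) follows from $F$ being closed under $\we$ and upward-closed, together with standard lattice manipulations. The delicate point, and the main obstacle in the argument, is compatibility with $\ra$: given $(a,b), (c,d) \in \Theta(F)$, I need a common witness in $F$ for $(a \ra c, b \ra d) \in \Theta(F)$. My approach would be to use the characterization $(x,y) \in \Theta(F) \Leftrightarrow x \rla y \in F$ proved in the remark and exploit the congruent filter property $t(x,y,g) \in F$ with $g \in F$, applied to suitable terms built from $a,b,c,d$ and elements of $F$, in order to deduce that $(a \ra c) \rla (b \ra d) \in F$.

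Third I would verify that the two assignments are mutually inverse. For $1/\Theta(F) = F$: if $a \in F$ then $a \we a = 1 \we a$ witnesses $(a,1) \in \Theta(F)$, while if $(a,1) \in \Theta(F)$ with witness $f \in F$, then $a \we f = f$ gives $f \leq a$ and hence $a \in F$. For $\Theta(1/\theta) = \theta$: if $(a,b) \in \theta$ then $a \rla b \equiv 1$, so $a \rla b \in 1/\theta$, and the identity $a \we (a \rla b) = (a \we b) \we (a \rla b) = b \we (a \rla b)$, valid in every hemi-implicative semilattice because $a \we (a \ra b) \leq b$ and $b \we (b \ra a) \leq a$, exhibits $a \rla b$ as a witness for $(a,b) \in \Theta(1/\theta)$; conversely, if $a \we f = b \we f$ for some $f \in 1/\theta$, then $a = a \we 1 \equiv a \we f = b \we f \equiv b \we 1 = b$ modulo $\theta$. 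Both assignments preserve inclusion, making the bijection a lattice isomorphism. The heart of the argument, and where the definition of congruent filter does the real work, lies in the compatibility of $\Theta(F)$ with $\ra$; the remaining verifications are careful but routine manipulations using the axioms of a hemi-implicative semilattice (lattice) and the equivalence $(x,y) \in \Theta(F) \Leftrightarrow x \rla y \in F$.
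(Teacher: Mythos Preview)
The paper does not prove this theorem; it is quoted from \cite{SM2} without argument. Your outline is a correct self-contained proof, and your identification of the compatibility of $\Theta(F)$ with $\ra$ as the crux is exactly right. That step, which you leave as a sketch, completes along the lines you indicate: choose a common witness $h\in F$ with $a\we h=b\we h$ and $c\we h=d\we h$; the congruent-filter axiom gives $t(a,c,h),\,t(b,d,h)\in F$, and since $(a\we h)\ra(c\we h)=(b\we h)\ra(d\we h)$, both $a\ra c$ and $b\ra d$ are $\Theta(F)$-related to this common element via the identity $x\we(x\rla y)=y\we(x\rla y)$, whence transitivity of $\Theta(F)$ yields $(a\ra c,\,b\ra d)\in\Theta(F)$. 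The remaining verifications in your outline are accurate.
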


Taking into account Theorem \ref{teofc}, it is possible to show
that Proposition \ref{lemc1b} can be seen as a corollary of
Proposition \ref{lemc1}. In order to show this assertion, let $T_1
= (T,\we,\vee,\ra,{\sim},\ce,0,1) \in \hBDL$. Then we write $T_2 =
(T,\leq,{\sim},\ra,\ce,0,1)$ for the corresponding object of
$\KhIS$. Since the set of congruent filters of $\C(T_1)$ is equal
to the set of congruent filters of $\C(T_2)$, then it follows from
Theorem \ref{teofc} that $\Con(\C(T_1)) = \Con(\C(T_2))$. In what
follows we will see that $\Con(T_1) = \Conwb(T_2)$. It is
immediate that $\Con(T_1) \subseteq \Conwb(T_2)$. Conversely, let
$\theta \in \Conwb(T_2)$. We will prove that $\theta$ preserves
$\we$ and $\vee$. Let $(x,y)\in \theta$ and $(z,w) \in \theta$.
Then it follows from $\Ttw$ that $(x\vee \ce, z\vee \ce) \in
\theta$ and $(y \vee \ce,w\vee \ce) \in \theta$. Then by $\Tth$ we
have that $((x\vee \ce)\we (z \vee \ce), (y\vee \ce)\we (w\vee
\ce))\in \theta$. But by the distributivity of the underlying
lattice of $T_1$ we deduce that $(x\we z) \vee \ce = (x\vee
\ce)\we (z\vee \ce)$ and $(y\we w) \vee \ce = (y\vee \ce)\we
(w\vee \ce)$. Thus,
\begin{equation} \label{cwbc0}
((x\we z)\vee \ce, (y\we w)\vee \ce \in \theta.
\end{equation}
Besides, since $(x,y)\in \theta$ and $(z,w)\in \theta$, then it
follows from the condition $\Ttw$ that $({\sim} x \vee \ce,{\sim}
y \vee \ce) \in \theta$ and $({\sim} y \vee \ce,{\sim} w \vee \ce)
\in \theta$. Equivalently, we have that
\begin{equation} \label{cwbc1}
({\sim} x \vee \ce,{\sim} y \vee \ce) \in \Gamma(\theta),
\end{equation}
\begin{equation} \label{cwbc2}
({\sim} y \vee \ce,{\sim} w \vee \ce) \in \Gamma(\theta).
\end{equation}
Since $\Gamma(\theta) \in \Con(\C(T_2))$ and $\Con(\C(T_1)) =
\Con(\C(T_2))$, then taking $\vee$ in (\ref{cwbc1}) and
(\ref{cwbc2}) we obtain $({\sim} x \vee {\sim} y \vee \ce, {\sim}
z \vee {\sim} w \vee \ce) \in \theta$, i.e.,
\begin{equation} \label{cwbc4}
({\sim}(x\we z)\vee \ce, {\sim}(z\we w) \vee \ce)\in \theta.
\end{equation}
Then it follows from $\Ttw$, (\ref{cwbc0}) and (\ref{cwbc4}) that
$(x\we z, y\we w)\in \theta$. The same argument combined with
$\Ton$ proves that $({\sim} x \we {\sim} z, {\sim} y \we {\sim} w)
\in \theta$, so $(x\vee z,y\vee w)\in \theta$. Hence,  $\theta$
preserves $\we$ and $\vee$, which implies that $\theta \in
\Con(T_1)$. Then $\Con(T_1) = \Conwb(T_2)$. Therefore, since
$\Con(T_1) = \Conwb(T_2)$ and $\Con(\C(T_1)) = \Con(\C(T_2))$, we
deduce that Proposition \ref{lemc1b} can be seen as a corollary of
Proposition \ref{lemc1}.

\begin{cor} \label{propc2}
Let $T\in \KhIS$. There exists an isomorphism between $\Conwb(T)$
and the lattice of congruent filters of $\C(T)$, which is
established via the assignments $\theta \mapsto 1/\Gamma(\theta)$
and $F\mapsto \Sigma(\Theta(F))$.
\end{cor}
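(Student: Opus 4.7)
The plan is to obtain Corollary \ref{propc2} as a straightforward composition of two isomorphisms already established. By Proposition \ref{p2} we have $\C(T)\in\hIS$ whenever $T\in\KhIS$, so Theorem \ref{teofc} applies to $\C(T)$ and gives an isomorphism between $\Con(\C(T))$ and the lattice of congruent filters of $\C(T)$, implemented by $\tau\mapsto 1/\tau$ and $F\mapsto \Theta(F)$. On the other hand, Proposition \ref{lemc1} provides an isomorphism between $\Conwb(T)$ and $\Con(\C(T))$, implemented by $\theta\mapsto\Gamma(\theta)$ and $\tau\mapsto\Sigma(\tau)$.

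First I would simply compose these two lattice isomorphisms. Going from $\Conwb(T)$ to congruent filters of $\C(T)$, an element $\theta\in\Conwb(T)$ is sent to $\Gamma(\theta)\in\Con(\C(T))$ and then to $1/\Gamma(\theta)$, which recovers the stated map $\theta\mapsto 1/\Gamma(\theta)$. In the other direction, a congruent filter $F$ of $\C(T)$ is sent to $\Theta(F)\in\Con(\C(T))$ and then to $\Sigma(\Theta(F))\in\Conwb(T)$, which again matches the assignment given in the statement.

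Since a composition of lattice isomorphisms is a lattice isomorphism whose inverse is the composition of the respective inverses, nothing further needs to be checked; in particular, the identities $1/\Gamma(\Sigma(\Theta(F)))=F$ and $\Sigma(\Theta(1/\Gamma(\theta)))=\theta$ follow immediately from the two cited results. Thus the proof reduces to one short paragraph pointing out this composition, and I do not anticipate any real obstacle beyond noting that Theorem \ref{teofc} is applicable because $\C(T)\in\hIS$ by Proposition \ref{p2}.
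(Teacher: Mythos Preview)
Your proposal is correct and follows exactly the same approach as the paper, which simply states that the result follows from Proposition~\ref{lemc1} and Theorem~\ref{teofc}. Your additional remark that Proposition~\ref{p2} guarantees $\C(T)\in\hIS$ so that Theorem~\ref{teofc} applies is a helpful clarification, and your explicit description of how the two assignments compose is more detailed than the paper's one-line proof but entirely in the same spirit.
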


\begin{proof}
It follows from Proposition \ref{lemc1} and Theorem \ref{teofc}.
\end{proof}

Similarly, the following result follows from Proposition
\ref{lemc1b} and Theorem \ref{teofc}.

\begin{cor} \label{propc2b}
Let $T\in \KhBDL$. There exists an isomorphism between $\Con(T)$
and the lattice of congruent filters of $\C(T)$, which is
established via the assignments $\theta \mapsto 1/\Gamma(\theta)$
and $F\mapsto \Sigma(\Theta(F))$.
\end{cor}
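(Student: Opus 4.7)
The plan is to prove Corollary \ref{propc2b} simply by composing the two isomorphisms already established in the paper, namely the isomorphism between $\Con(T)$ and $\Con(\C(T))$ from Proposition \ref{lemc1b} and the isomorphism between $\Con(\C(T))$ and the lattice of congruent filters of $\C(T)$ from Theorem \ref{teofc}. Both results apply: the first because we assume $T \in \KhBDL$, and the second because $\C(T) \in \hBDL$ by the results of Section \ref{s5}.

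First I would verify that the assignments stated in the corollary are precisely the composites of the two given isomorphisms. In one direction, Proposition \ref{lemc1b} sends $\theta \in \Con(T)$ to $\Gamma(\theta) \in \Con(\C(T))$, and Theorem \ref{teofc} sends a congruence $\tau$ of $\C(T)$ to its filter $1/\tau$. Composing yields $\theta \mapsto 1/\Gamma(\theta)$, as required. In the reverse direction, Theorem \ref{teofc} sends a congruent filter $F$ of $\C(T)$ to the congruence $\Theta(F)$, and Proposition \ref{lemc1b} sends this to $\Sigma(\Theta(F)) \in \Con(T)$. This is precisely the assignment $F \mapsto \Sigma(\Theta(F))$ in the statement.

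Since both maps in the composition are lattice isomorphisms, their composition is a lattice isomorphism, and it is mutually inverse by the corresponding inverse relations in Proposition \ref{lemc1b} (namely $\Gamma(\Sigma(\tau)) = \tau$ and $\Sigma(\Gamma(\theta)) = \theta$) and in Theorem \ref{teofc} (namely $1/\Theta(F) = F$ and $\Theta(1/\theta) = \theta$). There is essentially no obstacle here, since both component isomorphisms have already been proved and the proof reduces to chasing definitions through the composite. The only thing worth emphasizing in the write-up is that $\C(T)$ genuinely belongs to $\hBDL$ so that Theorem \ref{teofc} is applicable, and that the description of the composite maps matches the formulas stated in the corollary.
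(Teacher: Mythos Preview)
Your proof is correct and follows exactly the paper's approach: the paper's own proof simply states that the result follows from Proposition \ref{lemc1b} and Theorem \ref{teofc}, and you have spelled out this composition in detail, including the verification that the composite assignments match the stated formulas.
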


For implicative semilattices Corollary \ref{propc2} can be
simplified, and for semi-Heyting algebras Corollary \ref{propc2b}
also can be simplified. More precisely: if $H \in \IS$ or $H\in
\SH$ then the congruent filters of $H$ are all the filters of $H$
\cite{SM2}.

Let $H\in \Hil$ and $F\subseteq H$. Recall that $F$ is said to be
a \textit{deductive system} \cite{D} if the following conditions
are satisfied: a) $1\in F$, b) if $a\in F$ and $a\ra b\in F$ then
$b\in F$. Also recall that a deductive system $F$ is said to be
\emph{absorbent} \cite{Fig} if $a \ra (a\we b)\in F$ whenever
$a\in F$. It follows from Theorem \ref{teofc} and \cite[Lemma
3.3]{Fig} that the congruent filters of $H$ are the absorbent
deductive systems of $H$.

\begin{defn}
Let $A$ be an algebra and $a_1,b_1,\dots,a_n,b_n$ elements of $A$.
We write $\theta_{A}((a_1,b_1),\dots,(a_n,b_n))$ for the
congruence generated by $(a_1,b_1),\dots,(a_n,b_n)$. If $T\in
\KhIS$ we also write $\theta_{T}((a_1,b_1),\dots,(a_n,b_n))$ for
the well-behaved congruence generated by
$(a_1,b_1),\dots,(a_n,b_n)$.
\end{defn}

Let $H\in \hIS$ or $H\in \hBDL$, and let $a \in H$. We refer by
$F^{c}(a)$ to the congruent filter generated by $\{a\}$. In
\cite{SM2} the following assertions were proved:
\begin{enumerate}
\item  if $H\in \hIS$ or $H\in \hBDL$, then $(d,e) \in
\theta_{H}(a,b)$ if and only if $d\rla e \in F^{c}(a\rla b)$;
\item  if $H\in \IS$ or $H\in \SH$ then $(d,e) \in
\theta_{H}(a,b)$ if and only if $a\rla b \leq d\rla e$.
\end{enumerate}

 The following remark will be used
later.

\begin{rem} \label{clase1}
Let $H\in \hIS$ or $H\in \hBDL$. Let $\tau \in \Con(H)$. Then
$(a,b) \in \tau$ if and only if $a\rla b \in 1/\tau$. Moreover,
$(a,b)$, $(d,e) \in \tau$ if and only if $(a\rla b)\we (d\rla e)
\in 1/\tau$.
\end{rem}

In what follows we describe some aspects of the principal
well-behaved congruences of the objects of $\KhIS$ and some
aspects of the principal congruences of the  algebras in $\KhBDL$.
Let $T\in \KhIS$ or $T\in \KhBDL$. For $x$ and $y$ elements of $T$
we also write $x\rla y$ for the element $(x\ra y)\we (y\ra x)$.

\begin{lem}\label{lpc1}
Let $T\in \KhIS$ or $T\in \KhBDL$. Let $x$, $y$, $z$, $w \in T$.
Then
\begin{enumerate}[\normalfont (a)]
\item $(z,w) \in \theta_{T}(x,y)$ if and only if $(z\vee \ce,w\vee
\ce) \in \theta_{\C(T)}((x\vee \ce, y\vee \ce),(\sim x\vee \ce,
\sim y\vee \ce))$ and $(\sim z\vee \ce,\sim w\vee \ce) \in
\theta_{\C(T)}((x\vee \ce, y\vee \ce),(\sim x\vee \ce, \sim y\vee
\ce))$. \item  If $x$, $y$, $z$ and $w$ are in $\C(T)$, then
\[
1/\theta_{\C(T)}((x,y),(z,w)) = F^{c}((x\rla y) \we (z\rla w)).
\]
\end{enumerate}
\end{lem}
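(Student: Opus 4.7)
My plan is to derive both parts from the machinery already established: the isomorphism $\Conwb(T) \cong \Con(\C(T))$ given by $\Gamma, \Sigma$ (Proposition \ref{lemc1}) for part (a), and Theorem \ref{teofc} together with Remark \ref{clase1} for part (b).

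For (a), I would prove the equality
\[
\theta_T(x,y) \;=\; \Sigma\bigl(\theta_{\C(T)}((x\vee \ce,\, y\vee \ce),\,(\sim x\vee \ce,\, \sim y\vee \ce))\bigr),
\]
since unwinding the definition of $\Sigma$ then yields exactly the biconditional in the statement. The right-hand side is a well-behaved congruence by Lemma \ref{lem:Sigma-tau}, and it contains $(x,y)$ because the two defining pairs $(x\vee \ce, y\vee \ce)$ and $(\sim x\vee \ce, \sim y\vee \ce)$ lie in $\theta_{\C(T)}((x\vee \ce, y\vee \ce),(\sim x\vee \ce, \sim y\vee \ce))$. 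For minimality, let $\theta'$ be any well-behaved congruence with $(x,y) \in \theta'$. Condition $\Ttw$ forces $(x\vee \ce, y\vee \ce), (\sim x\vee \ce, \sim y\vee \ce) \in \theta'$, so both pairs belong to $\Gamma(\theta') \in \Con(\C(T))$. Hence $\Gamma(\theta') \supseteq \theta_{\C(T)}((x\vee \ce, y\vee \ce),(\sim x\vee \ce, \sim y\vee \ce))$, and applying the monotone inverse $\Sigma$ from Proposition \ref{lemc1} gives $\theta' = \Sigma(\Gamma(\theta')) \supseteq \Sigma(\theta_{\C(T)}(\ldots))$, as required.

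For (b), I would invoke Theorem \ref{teofc} applied to $\C(T) \in \hIS$ (which is an object of $\hIS$ by Proposition \ref{p2}): the lattice isomorphism $\tau \mapsto 1/\tau$ identifies $\Con(\C(T))$ with the lattice of congruent filters of $\C(T)$. Under this correspondence, a congruence $\tau$ contains both $(x,y)$ and $(z,w)$ precisely when $(x\rla y)\we (z\rla w) \in 1/\tau$, by Remark \ref{clase1}. Therefore the $1$-class of the least such congruence is the least congruent filter containing $(x\rla y)\we (z\rla w)$, which is by definition $F^{c}((x\rla y)\we (z\rla w))$.

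The main work is in part (a), and specifically in keeping the ``$\Gamma$-side'' and ``$\Sigma$-side'' descriptions aligned: the crucial observation that makes everything go through is that $\Ttw$ converts membership of $(x,y)$ in a well-behaved congruence into membership of two pairs lying inside $\C(T) \times \C(T)$, which is exactly what is needed so that the principal congruence on $\C(T)$ generated by those two pairs controls the whole situation. Part (b) presents no obstacle; it is a direct translation between the congruence lattice and the congruent filter lattice via Theorem \ref{teofc} and Remark \ref{clase1}.
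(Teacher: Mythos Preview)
Your proposal is correct and follows essentially the same approach as the paper: for (a) both arguments hinge on Proposition~\ref{lemc1}, condition $\Ttw$, and the $\Gamma/\Sigma$ correspondence, and for (b) both invoke Theorem~\ref{teofc} together with Remark~\ref{clase1}. The only cosmetic difference is that you package (a) as the identity $\theta_T(x,y)=\Sigma\bigl(\theta_{\C(T)}((x\vee\ce,y\vee\ce),({\sim}x\vee\ce,{\sim}y\vee\ce))\bigr)$ and then unwind $\Sigma$, whereas the paper argues the biconditional element-wise; the underlying logic is identical.
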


\begin{proof}
We  consider $T\in \KhIS$ (the proof for $T\in \KhBDL$ is
analogous).

First we  prove a). Let $(z,w) \in \theta_{T}(x,y)$. Then
$(z,w) \in \theta$ for every $\theta \in \Conwb(T)$ such that
$(x,y) \in \theta$. Now we  see that
\[
(z\vee \ce,w\vee \ce) \in \theta_{\C(T)}((x\vee \ce, y\vee
\ce),(\sim x\vee \ce, \sim y\vee \ce)),
\]
\[
(\sim z\vee \ce,\sim w\vee \ce) \in \theta_{\C(T)}((x\vee \ce,
y\vee \ce),(\sim x\vee \ce, \sim y\vee \ce)).
\]
Let $\tau \in \Con(\C(T))$ such that $(x\vee \ce, y\vee \ce) \in
\tau$ and $(\sim x \vee \ce, \sim y \vee \ce) \in \tau$. It
follows from Proposition \ref{lemc1} that $\Sigma(\tau) \in
\Conwb(T)$. We also have that $(x,y) \in \Sigma(\tau)$. Then by
hypothesis we obtain that $(z,w) \in \Sigma(\tau)$. Hence, $(z\vee
\ce, w\vee \ce) \in \tau$ and $(\sim z \vee \ce, \sim w \vee \ce)
\in \tau$, which was our aim.

Conversely, assume that $(z\vee \ce,w\vee \ce) \in
\theta_{\C(T)}((x\vee \ce, y\vee \ce),(\sim x\vee \ce, \sim y\vee
\ce))$ and $(\sim z\vee \ce,\sim w\vee \ce) \in
\theta_{\C(T)}((x\vee \ce, y\vee \ce),(\sim x\vee \ce, \sim y\vee
\ce))$. Let $\theta \in \Conwb(T)$ be such that $(x,y) \in \theta$.
It follows from Proposition \ref{lemc1} that $\Gamma(\theta) \in
\Con(\C(T))$. Moreover, $(x\vee \ce,y\vee \ce) \in \Gamma(\theta)$
and $(\sim x\vee \ce,\sim y\vee \ce) \in \Gamma(\theta)$. Thus by
hypothesis we have that $(z\vee \ce, w\vee \ce) \in
\Gamma(\theta)$ and $(\sim z\vee \ce, \sim w\vee \ce) \in
\Gamma(\theta)$, i.e., $(z\vee \ce, w\vee \ce) \in \theta$ and
$(\sim z\vee \ce, \sim w\vee \ce) \in \theta$. Then it follows
from $\Ttw$ that $(z,w) \in \theta$. Thus, $(z,w) \in
\theta_{T}(x,y)$.

Finally, we  prove b). Let $H\in \hIS$. We write $\tau$ for an
arbitrary well-behaved congruence of $H$. Then
\[
\theta_{H}((x,y),(z,w)) = \bigcap\{\tau \in \Conwb(H): (x,y),(z,w) \in \tau\}.
\]
Hence,
\[
1/\theta_{H}((x,y),(z,w)) = \bigcap\{1/\tau: \tau \in \Conwb(H) \text{ and } (x,y), (z,w) \in \tau\}.
\]
Then it follows from Remark \ref{clase1} that
\[
1/\theta_{H}((x,y),(z,w)) = \bigcap\{ 1/\tau: \tau \in \Conwb(H) \text{ and }  (x\rla y)\we (z\rla w) \in 1/\tau\}.
\]
Thus, by Theorem \ref{teofc} we have that
\[
1/\theta_{H}((x,y),(z,w)) = F^{c}((x\rla y)\we (z\rla w)).
\]
In particular, the last assertion holds for $H = \C(T)$.
\end{proof}

\begin{rem}
The proof of item (b) of Lemma \ref{lpc1} can be done in a
different way. Let $\theta$ be a congruence of an algebra $H\in
\hIS$, and let $a,b\in H$. Straightforward computations show that
$F^{c}(a) \vee F^{c}(b) = F^{c}(a \we b)$, where $\vee$ is the
supremum in the lattice of congruent filters of $H$. On the other
hand, it follows from general results from universal algebra that
$\theta_{H}((x,y),(z,w)) = \theta_{H}(x,y) \vee \theta_H(z,w)$,
where $\vee$ is the supremum in the lattice of congruences of $H$
\cite{Sanka}. In \cite{SM2} it was proved that $1/\theta_{H}(x,y)
= F^{c}(x\rla y)$. Then
\[
\begin{array}
[c]{lllll}
1/\theta_{H}((x,y),(z,w)) & = &  1/\theta_{H}(x,y) \vee 1/\theta_{H}(z,w) &  & \\
& = & F^{c}(x\rla y) \vee F^{c}(z\rla w)&  & \\
& = & F^{c}((x\rla y) \we (z\rla w)).& &
\end{array}
\]
\end{rem}

Let $T\in \KhIS$ or $T\in \KhBDL$. For every $x$, $y\in T$ we
define the following binary term:
\[
q(x,y) = ((x\vee \ce) \rla (y\vee \ce))\we ((\sim x\vee \ce) \rla
(\sim y\vee \ce)).
\]

In the proof of the following corollary we will use Remark
\ref{clase1} and Lemma \ref{lpc1}.

\begin{cor}
Let $T \in \KhIS$ or $T\in \KhBDL$. Let $x$, $y$, $z$, $w \in T$.
\begin{enumerate}[\normalfont (a)]
\item $(z,w) \in \theta_{T}(x,y)$ if and only if $q(z,w) \in
F^{c}(q(x,y))$. \item If $T\in \KIS$ or $T \in \KSH$ then $(z,w)
\in \theta_{T}(x,y)$ if and only if $q(x,y) \leq q(z,w)$.
\end{enumerate}
\end{cor}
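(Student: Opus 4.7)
The plan is to combine the three tools suggested by the corollary's statement, Lemma~\ref{lpc1}, Remark~\ref{clase1}, and Theorem~\ref{teofc}, in a direct chain of equivalences. Throughout I would handle $T\in\KhIS$ and $T\in\KhBDL$ in parallel, since each of the auxiliary results (including the one saying that congruent filters coincide with all filters in $\IS$ and $\SH$) has versions for both categories.

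For part (a), I would first invoke Lemma~\ref{lpc1}(a) to rewrite $(z,w)\in\theta_T(x,y)$ as the conjunction
\[
(z\vee\ce,w\vee\ce),\;(\sim z\vee\ce,\sim w\vee\ce)\in \theta_{\C(T)}((x\vee\ce,y\vee\ce),(\sim x\vee\ce,\sim y\vee\ce)).
\]
Call this congruence $\tau$. Since $(x\vee\ce,y\vee\ce)$ and $(\sim x\vee\ce,\sim y\vee\ce)$ both lie in $\C(T)\times\C(T)$, Lemma~\ref{lpc1}(b) applies and identifies $1/\tau$ with the principal congruent filter $F^{c}(q(x,y))$ of $\C(T)$. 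Next, Remark~\ref{clase1} (applied inside $\C(T)$ to the two pairs $(z\vee\ce,w\vee\ce)$ and $(\sim z\vee\ce,\sim w\vee\ce)$, which lie in $\C(T)$ thanks to $\Ko$) converts the membership of both pairs in $\tau$ into the single condition
\[
((z\vee\ce)\rla(w\vee\ce))\we((\sim z\vee\ce)\rla(\sim w\vee\ce))\in 1/\tau,
\]
which is exactly $q(z,w)\in F^{c}(q(x,y))$. Chaining these three steps produces the equivalence claimed in (a). A small point to verify en route is that $q(z,w)$ actually belongs to $\C(T)$, which follows from $\Ko$ (giving $(z\vee\ce)\ra(w\vee\ce),(w\vee\ce)\ra(z\vee\ce)\geq\ce$ and similarly for the negated components) together with $\KMth$ (guaranteeing the needed meets).

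For part (b), I would observe that when $T\in\KIS$ we have $\C(T)\in\IS$ by Lemma~\ref{lis2}(b), and when $T\in\KSH$ we have $\C(T)\in\SH$ by Lemma~\ref{lSH}(b). In both $\IS$ and $\SH$ the text recalls, citing \cite{SM2}, that every filter is a congruent filter. Consequently $F^{c}(q(x,y))$ coincides with the principal up-set $\{u\in\C(T):q(x,y)\leq u\}$, and the equivalence $q(z,w)\in F^{c}(q(x,y))\Longleftrightarrow q(x,y)\leq q(z,w)$ is immediate. Combining this with part (a) gives (b).

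I do not anticipate a serious obstacle: the corollary is essentially a transcription of Lemma~\ref{lpc1} into the single term $q$, with Remark~\ref{clase1} doing the work of packaging two congruence conditions into one filter-membership condition. The only mildly delicate point is bookkeeping, namely making sure one applies Lemma~\ref{lpc1}(b) to the correct four-element congruence on $\C(T)$ and that all elements involved genuinely lie in $\C(T)$; once this is checked, the two equivalences fall out of the cited results without further computation.
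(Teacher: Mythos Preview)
Your proposal is correct and follows essentially the same approach as the paper: Lemma~\ref{lpc1}(a) to pass to $\C(T)$, Remark~\ref{clase1} to package the two congruence conditions into a single filter-membership condition, and Lemma~\ref{lpc1}(b) to identify $1/\tau$ with $F^{c}(q(x,y))$; part (b) is then the observation that in $\IS$ and $\SH$ congruent filters are just filters. The only cosmetic difference is that the paper applies Remark~\ref{clause1} first and Lemma~\ref{lpc1}(b) last, while you swap these two steps; also, the fact that $z\vee\ce,\,{\sim}z\vee\ce\in\C(T)$ follows directly from $u\vee\ce\geq\ce$ and does not need $\Ko$ (you do need $\Ko$ where you invoke it later, for $q(z,w)\in\C(T)$).
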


\begin{proof}
The condition $(z,w) \in \theta_{T}(x,y)$ is equivalent to the
conditions
\[
(z\vee \ce,w\vee \ce) \in \theta_{\C(T)}((x\vee \ce, y\vee
\ce),(\sim x\vee \ce, \sim y\vee \ce)),
\]
\[
(\sim z\vee \ce,\sim w\vee \ce) \in \theta_{\C(T)}((x\vee \ce,
y\vee \ce),(\sim x\vee \ce, \sim y\vee \ce)),
\]
which are equivalent to
\[
(z\vee \ce) \rla (w\vee \ce) \in 1/\theta_{\C(T)}((x\vee \ce,
y\vee \ce),(\sim x\vee \ce, \sim y\vee \ce)),
\]
\[
({\sim} z\vee \ce) \rla ({\sim} w\vee \ce) \in
1/\theta_{\C(T)}((x\vee \ce, y\vee \ce),(\sim x\vee \ce, \sim
y\vee \ce)),
\]
which happens if and only if
\[
q(z,w) \in 1/\theta_{\C(T)}((x\vee \ce, y\vee \ce),(\sim x\vee
\ce, \sim y\vee \ce)).
\]
But this last fact is equivalent to say that $q(z,w) \in
F^{c}(q(x,y))$.

If $T \in \KIS$ or $T\in \KSH$,  then $F^{c}(q(x,y))$ is equal to
the filter generated by $\{q(x,y)\}$, so $(z,w) \in
\theta_{T}(x,y)$ if and only if $q(x,y) \leq q(z,w)$.
\end{proof}

\vspace{3mm}

\subsection*{Acknowledgments}

This project has received funding from the European Union’s
Horizon 2020 research and innovation programme under the Marie
Sklodowska-Curie grant agreement No 689176.

The first author was also partially supported by the research
grant 2014 SGR 788 from the government of Catalonia and by the
research projects MTM2011-25747 and MTM2016-74892-P from the
government of Spain, which include \textsc{feder} funds from the
European Union. The second author was also supported by CONICET
Project PIP 112-201501-00412, and he thanks Marta Sagastume for
several conversations concerning the matter of this paper.

%%%%%%%%%%%%%%%%%%%%%%%%%%%%%%%%
%   Bibliografía
%%%%%%%%%%%%%%%%%%%%%%%%%%%%%%%%

{\small }

---------------------------------------------------------------------------------------
\\
Ramon Jansana,\\
Departament de Filosofia,\\
Universitat de  Barcelona.\\
Montalegre, 6,\\
08001, Barcelona,\\
España.\\
jansana@ub.edu

-----------------------------------------------------------------------------------------
\\
Hern\'an Javier San Mart\'in,\\
Departamento de Matem\'atica, \\
Facultad de Ciencias Exactas (UNLP), \\
and CONICET.\\
Casilla de correos 172,\\
La Plata (1900),\\
Argentina.\\
hsanmartin@mate.unlp.edu.ar

\end{document}